\theoremstyle{plain}
\newtheorem{thm}{\bf Theorem}[section]
\newtheorem{lem}[thm]{\bf Lemma}
\newtheorem{cor}[thm]{\bf Corollary}
\newtheorem{prop}[thm]{\bf Proposition}
\theoremstyle{remark}
\newtheorem{example}[thm]{\bf Example}
\newtheorem{rem}[thm]{\bf Remark}
\theoremstyle{definition}
\numberwithin{equation}{thm}
\newcommand{\bbA}{{\mathbb A}}
\newcommand{\bbC}{{\mathbb C}}
\newcommand{\bbF}{{\mathbb F}}
\newcommand{\bbG}{{\mathbb G}}
\newcommand{\bbN}{{\mathbb N}}
\newcommand{\bbP}{{\mathbb P}}
\newcommand{\bbQ}{{\mathbb Q}}
\newcommand{\bbZ}{{\mathbb Z}}
\newcommand{\cA}{{\mathcal A}}
\newcommand{\cC}{{\mathcal C}}
\newcommand{\cD}{{\mathcal D}}
\newcommand{\cF}{{\mathcal F}}
\newcommand{\cG}{{\mathcal G}}
\newcommand{\cI}{{\mathcal I}}
\newcommand{\ccH}{{\mathcal H}}
\newcommand{\cL}{{\mathcal L}}
\newcommand{\cK}{{\mathcal K}}
\newcommand{\cM}{{\mathcal M}}
\newcommand{\cO}{{\mathcal O}}
\newcommand{\cP}{{\mathcal P}}
\newcommand{\cS}{{\mathcal S}}
\newcommand{\cX}{{\mathcal X}}
\newcommand{\cY}{{\mathcal Y}}
\newcommand{\rD}{{\rm D}}
\newcommand{\rK}{{\rm K}}
\newcommand{\sA}{{\mathscr A}}
\newcommand{\sC}{{\mathscr C}}
\newcommand{\sD}{{\mathscr D}}
\newcommand{\sM}{{\mathscr M}}
\newcommand{\sU}{{\mathscr U}}
\newcommand{\sX}{{\mathscr X}}
\newcommand{\sY}{{\mathscr Y}}
\newcommand{\sZ}{{\mathscr Z}}
\newcommand{\Alev}{{}_N\sA_g}
\newcommand{\Alevn}[1]{{}_N\sA_{#1}}
\newcommand{\toA} {{\overline{\Alev}}}
\newcommand{\cotan}{T^*}
\DeclareMathOperator{\Spec}{Spec}
\DeclareMathOperator{\Pic}{Pic}
\DeclareMathOperator{\im}{Image}
\DeclareMathOperator{\Perv}{Perv}
\newcommand{\isom}{\cong}
\begin{document}
\title[signed Euler characteristics of moduli]{Nonnegativity of signed Euler characteristics of moduli of curves and abelian varieties.}
\author{Donu Arapura}
\address{Department of Mathematics, Purdue University,
150 N. University Street, West Lafayette, IN 47907, U.S.A.}
\email{arapura@purdue.edu}
\author{Deepam Patel}
\address{Department of Mathematics, Purdue University,
150 N. University Street, West Lafayette, IN 47907, U.S.A.}
\email{patel471@purdue.edu}
\thanks{First author supported by a grant from the Simons Foundation}

\begin{abstract}
Formulas of Harer-Zagier and Harder imply  that the orbifold Euler characteristic of the moduli stacks of smooth curves or principally polarized abelian varieties have a sign given by $(-1)^{\dim \sM_g }$
or   $(-1)^{\dim \sA_g }$. This is generalized as follows:
Given a perverse sheaf on a product of moduli stacks of either type, it is proved that the  Euler characteristic is nonnegative when the base field has characteristic zero. This is shown to be false in positive characteristic.
\end{abstract}
\maketitle

\tableofcontents

\section{Introduction}
Let $\sM_{g,n}$ denote the moduli stack of smooth projective curves of genus $g$ and $n$-marked points, let $\sA_g$ denote the moduli stack of principally polarized $g$-dimensional abelian varieties, and let 
$\sA_{g,n}$ the $n$-fold fibre product of the universal family over $\sA_g$. We work over an algebraically closed base field $k$. In this setting, when $k = \bbC$, we have the following results due to Harer-Zagier \cite{hz} and Harder \cite{harder}:

\begin{enumerate}
\item The orbifold characteristic or  equivalently the Euler-Satake characteristic 
$$\chi^{orb}(\sM_{g,n})= (-1)^n\frac{(2g-1)B_{2g}}{(2g)!}(2g+n-3)!\quad \text{when } g\ge 2$$
\item If  $n \geq 3$, then  $\chi^{orb}(\sM_{0,n})=(-1)^{n-3}(n-3)!$.
\item If  $n \geq 1$, then  $\chi^{orb}(\sM_{1,n})=(-1)^n\frac{(n-1)!}{12}$
\item The orbifold Euler characteristic 
$$\chi^{orb}(\sA_{g})= \zeta(-1)\zeta(-3)\cdots\zeta(1-2g).$$
\end{enumerate}

\begin{cor}
 The sign of $\chi^{orb}(\sM_{g,n}) $ is given by $(-1)^{\dim \sM_{g,n}}$ and the sign of $\chi^{orb}(\sA_g)$ is given by $(-1)^{\dim(\sA_g)}$
\end{cor}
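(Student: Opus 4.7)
The plan is to read the sign of each formula directly and compare it to the parity of the stated dimension. All the ingredients are either listed in the excerpt or are standard classical facts; no conceptual input is needed beyond bookkeeping of signs.

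First I would record the dimensions: $\dim \sM_{g,n} = 3g-3+n$ for $g\ge 2$, $\dim\sM_{0,n}=n-3$, $\dim\sM_{1,n}=n$, and $\dim\sA_g=g(g+1)/2$. Next I would invoke the sign of even Bernoulli numbers, namely $(-1)^{g+1}B_{2g}>0$ for $g\ge 1$, and (for the abelian case) the functional equation identity $\zeta(1-2k)=-B_{2k}/(2k)$, which has sign $(-1)^k$.

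Then I would substitute into each of the four formulas (1)--(4). For (1), the factors $(2g-1)$, $(2g)!$, and $(2g+n-3)!$ are positive (once $g\ge 2$, so $2g+n-3\ge 1$), so the sign of $\chi^{orb}(\sM_{g,n})$ is $(-1)^n\cdot\mathrm{sign}(B_{2g})=(-1)^{n+g+1}$, which equals $(-1)^{3g-3+n}=(-1)^{\dim\sM_{g,n}}$. For (2) and (3) the signs $(-1)^{n-3}$ and $(-1)^n$ are already the parities of $\dim\sM_{0,n}$ and $\dim\sM_{1,n}$ respectively. For (4) the sign of $\chi^{orb}(\sA_g)$ is $\prod_{k=1}^g(-1)^k=(-1)^{1+2+\cdots+g}=(-1)^{g(g+1)/2}=(-1)^{\dim\sA_g}$.

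There is no real obstacle here; the only mild care required is in (1), where one must ensure $2g+n-3\ge 0$ so the factorial is a positive integer, and in (4), where one must correctly match the parity of $\sum_{k=1}^g k$ with $g(g+1)/2$. Everything else is direct substitution.
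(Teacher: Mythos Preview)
Your proof is correct and follows essentially the same approach as the paper: both read off the sign from the known dimension formulas, the identity $\zeta(1-2m)=-B_{2m}/(2m)$, and the sign pattern $\operatorname{sign}B_{2m}=(-1)^{m+1}$. The paper merely lists these ingredients and leaves the substitution implicit, whereas you carry it out explicitly case by case, but there is no difference in strategy.
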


\begin{proof}
 This follows from the standard formulas $\dim(\sM_{g,n}) = 3g-3+n$, $\dim (\sA_{g})  = \frac{g(g+1)}{2}$, $\zeta(1-2m) = -\frac{B_{2m}}{2m}$, and
 $$\operatorname{sign} B_{2m} =
\begin{cases}
 +1 & \text{if $m$ odd}\\
 - 1 &\text{if $m$ even}
\end{cases}
$$
(We take  $\operatorname{sign} x$ to be $1$ if $x \geq 0$ and $-1$ otherwise.)
\end{proof}

In this article, we generalize this corollary by showing that the sign of the Euler characteristic of any smooth closed irreducible substack  $\sX$ of one the previous stacks satisfies the same condition.  
In fact, we can strengthen this as follows.
Given a Deligne-Mumford stack defined over $k$,
let $\Perv(\sX)$ be the category of perverse sheaves. In the following, this will mean one of two things. If $k = \bbC$, one can consider the underlying analytic Deligne-Mumford stack and perverse sheaves with $\bbQ$ coefficients (in the analytic topology). For an arbitrary field $k$ with $char(k) =p$, one can consider the category of $\ell$-adic perverse sheaves (for some $\ell \neq p$). In this setting  and when $char(k)=0$, one can define the Euler-Satake characteristic of a perverse sheaf (as a rational number). The precise definition of $\Perv(\sX)$ and  the notion of Euler characteristic for an object
of this category is given in  section \ref{sec:background}.
Let us start by stating one of our main results.

\begin{thm}\label{thm:mainthm}
Let $\sX$ be one of $\sM_{g,n}$ (with $2g-2+n\geq 3$) or $\sA_{g,n}$, and suppose $char(k) = 0$. Then for $\cK \in \Perv(\sX)$
$$ \chi^{orb}(\sX,\cK) \geq 0.$$
\end{thm}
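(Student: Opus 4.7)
My plan is to reduce the problem to a smooth quasi-projective scheme via a level cover, apply Kashiwara's index formula on the cover, and then exploit the abundance of modular or holomorphic differentials on the moduli of curves and abelian varieties to express the Euler characteristic as a sum of nonnegative intersection numbers. Concretely, for $N\ge 3$ the moduli stack $\sX[N]$ with a full level-$N$ structure is representable by a smooth quasi-projective scheme $X$, and the forgetful map $\pi\colon X\to\sX$ is finite étale with Galois group $G$. Because Euler-Satake characteristics multiply by degree under étale covers and $\pi^*$ preserves perversity, it is enough to prove $\chi(X,\cL)\ge 0$ for every $\cL\in\Perv(X)$.

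Next I would apply Kashiwara's index formula
\[
\chi(X,\cL)\;=\;\deg\bigl([T^*_X X]\cdot CC(\cL)\bigr)
\]
and use that perversity of $\cL$ forces $CC(\cL)=\sum_Y m_Y[T^*_Y X]$ with $m_Y\ge 0$.  To evaluate the intersection one deforms the zero section inside $T^*X$ to the graph $\Gamma_\omega$ of a generic global $1$-form $\omega$ on a smooth compactification $\bar X\supseteq X$, allowing at most logarithmic poles along the boundary.  For generic $\omega$, $\Gamma_\omega\cdot[T^*_Y X]$ equals the zero scheme of $\omega|_Y$, which is a nonnegative effective cycle; summing gives the desired inequality term by term.

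The geometric input driving the argument is that the log cotangent bundle of a suitable smooth compactification $\bar X$ is globally generated.  For $\sA_g$ (and hence $\sA_{g,n}$) this follows from $\Omega^1_{\sA_g}\cong\Sym^2\bbE$ together with the fact that the Hodge bundle $\bbE$ is globally generated by Siegel modular forms of weight one on a toroidal compactification, while the relative cotangent of the universal family is trivialised by translation-invariant forms on each fibre.  For $\sM_{g,n}$ the log cotangent bundle is globally generated by meromorphic quadratic differentials with at worst simple poles at the marked points, a classical consequence of Teichm\"uller theory.  A generic combination of these sections therefore meets each stratum transversally, so each intersection appearing above is a nonnegative integer count of zeros.

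The main obstacle is justifying the deformation argument on the \emph{open} variety $X$: Kashiwara's formula is most naturally stated on a proper base, and one has to check that the intersection $[T^*_X X]\cdot CC(\cL)$ can indeed be recovered by pairing $CC(\cL)$ with the graph of a logarithmic $1$-form on $\bar X$, with the contribution from the boundary remaining nonnegative (or vanishing).  This is precisely where the modular origin of $X$ is essential, and also where characteristic zero enters the argument: in characteristic $p$ the required global generation and generic transversality of sections of the log cotangent bundle can fail, which is presumably the source of the counterexample promised in the abstract.
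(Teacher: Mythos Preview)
Your opening reduction to a level cover $X$ and the plan to use a Dubson--Kashiwara type formula are exactly how the paper begins.  The gap is in the ``geometric input'' paragraph, and it is different in nature for the two cases.

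For $\sA_{g,n}$ you assert that the log cotangent bundle of a toroidal compactification is \emph{globally generated}, but this is not what is known and not what is needed.  Sections of the Hodge bundle $\bbE$ are not Siegel modular forms of weight one (those are sections of $\det\bbE$), and global generation of $\bbE$ on $\overline{{}_N\sA_g}$ is not available.  What one actually has is the Fujita--Kawamata theorem: $\bbE$, hence $\Omega^1_{\overline X}(\log E)\cong S^2\bbE$, is \emph{nef}.  Nefness is enough once you replace your ``deform the zero section to $\Gamma_\omega$'' argument by the Fulton--Lazarsfeld inequality: for a nef bundle $E$ of rank $e$ and any effective cone $C\subset E$ of dimension $e$, one has $\langle C,[X]\rangle\ge 0$.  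Combined with the log Dubson--Kashiwara formula of Wu--Zhou (which is precisely the tool that handles your ``main obstacle'' of working on the open $X$), this gives $\chi(X,\cL)\ge 0$ with no transversality or generic section argument needed.  Your graph-of-a-form approach, even if global generation held, would still need work: $\Gamma_\omega\cap \overline{T^*_YX}$ is not the zero scheme of $\omega|_Y$ when $Y$ is singular, and on a quasi-projective base ``effective'' does not automatically mean ``nonnegative degree''.

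For $\sM_{g,n}$ the gap is more serious.  Your sentence ``the log cotangent bundle is globally generated by meromorphic quadratic differentials'' conflates the \emph{fibre} of $\Omega^1_{\sM_{g,n}}$ at $[C]$, which is $H^0(C,\omega_C^{\otimes 2}(\sum p_i))$, with a supply of \emph{global sections} of $\Omega^1$ over the moduli space; these are completely different objects.  In fact no direct nefness or global generation statement for $\Omega^1_{\overline{{}_N\sM_{g,n}}}(\log\partial)$ is available, and the paper does \emph{not} proceed this way.  Instead one uses the (extended, proper) Torelli map $\tau:{}_N\sM^c_{g,n}\to{}_N\sA_{g,n}$ to transport the problem to $\sA_{g,n}$, where the nef argument above applies to $\tau(\,\cdot\,)$.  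Because $\tau$ is not an embedding on the compact-type boundary, an additional inductive mechanism is required: a theorem saying that if $f:X\to Y$ is proper birational with $Z=X\setminus U$ Cartier, and both $Y$ and $Z$ have nonnegative perverse Euler characteristics, then so does $X$.  This is proved via Beilinson's gluing of perverse sheaves along $Z$ together with a deformation to the normal cone; the boundary $Z$ of ${}_N\sM^c_{g,n}$ is then handled by induction on $(g_1,g_2,\ldots)$ using the gluing maps.  None of this is visible from your sketch, and it is genuinely the hard part of the $\sM_{g,n}$ case.
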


Another motivation for this theorem comes from the work of Liu, Maxim and Wang \cite{LMW}, the author and Wang \cite{aw}, and Deng and Wang \cite{dw}. Similar inequalities were obtained
in \cite{aw} and \cite{dw} (and conjectured in \cite{LMW})  for perverse sheaves on a projective manifold that admits a  nonpositively curved K\"ahler metric or a {\em large} local system. A local system is large  if it  restricts nontrivially to all curves mapping nontrivially to the variety.
These conditions certainly hold for the orbifold $\sA_g$ since it is a locally symmetric space, and for $\sM_g$ since it essentially embeds into it.
So theorem \ref{thm:mainthm} should be viewed as an extension of the aforementioned results to these sorts of  noncompact spaces.

\begin{cor}\label{cor:maincor}
   If $\sX\subset \sA_{g,n}$ or $\sX\subset \sM_{g,n}$ is a smooth closed irreducible substack, then
   $$(-1)^{\dim X}\chi^{orb}(\sX)\ge 0.$$
   More generally, the analogous assertion holds for any local complete intersection.
\end{cor}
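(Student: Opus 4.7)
The strategy is to deduce the corollary directly from Theorem \ref{thm:mainthm} by producing, from the inclusion $\sX \subset \sY$, a perverse sheaf on the ambient stack whose Euler characteristic recovers $\chi^{orb}(\sX)$ up to a controllable sign. Let $i : \sX \hookrightarrow \sY$ denote the closed immersion, with $\sY$ one of $\sM_{g,n}$ or $\sA_{g,n}$, and set $d = \dim \sX$.

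The main step is to verify that $\bbQ_\sX[d] \in \Perv(\sX)$. For smooth $\sX$ of pure dimension $d$ this is standard: the shifted constant sheaf is Verdier self-dual and perverse. In the lci case one argues locally. An lci substack admits, locally on a smooth cover, a regular embedding of codimension $c$ into a smooth stack of dimension $d+c$. For such a regular embedding, the purity/Gysin formula gives $i^!\bbQ[2(d+c)] \simeq \bbQ_\sX[2d]$, and combining with the smooth case the constructible dualizing complex becomes $\omega_\sX^\bullet \simeq \bbQ_\sX[2d]$. Hence $\bD(\bbQ_\sX[d]) \simeq \bbQ_\sX[d]$, so $\bbQ_\sX[d]$ satisfies both the support and the cosupport conditions.

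Given this, set $\cK := i_*\bbQ_\sX[d]$. Since $i$ is a closed immersion of Deligne--Mumford stacks, $i_*$ is $t$-exact for the perverse $t$-structure, so $\cK \in \Perv(\sY)$. The identity $R\Gamma(\sY, i_*(-)) = R\Gamma(\sX, -)$ then gives
$$\chi^{orb}(\sY,\cK) \;=\; \chi^{orb}(\sX,\bbQ_\sX[d]) \;=\; (-1)^d \chi^{orb}(\sX),$$
where the last equality reflects the degree shift in the alternating sum defining the Euler characteristic. Applying Theorem \ref{thm:mainthm} to $\cK$ yields $(-1)^d\chi^{orb}(\sX) \ge 0$.

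The only nontrivial ingredient is the perversity of $\bbQ_\sX[d]$ in the lci case. Once granted, the rest is a formal consequence of $t$-exactness of closed pushforward and the behavior of Euler characteristics under shifts. The Gysin computation should pass from schemes to Deligne--Mumford stacks by working on a smooth atlas, so I do not anticipate a genuine obstacle there.
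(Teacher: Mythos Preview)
Your approach matches the paper's exactly: set $\cK = i_*\bbQ_\sX[d]$, observe it is perverse, and apply Theorem~\ref{thm:mainthm}. The paper dispatches the perversity in one line by citing \cite[chap~III, lemma~6.5]{kw}.

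One correction to your justification in the lci case: the Gysin/purity isomorphism $i^!\bbQ \simeq \bbQ_\sX[-2c]$ requires both source and target of $i$ to be smooth, not merely that $i$ be a regular embedding. For a singular lci $\sX$ the dualizing complex is not $\bbQ_\sX[2d]$ --- already for a nodal plane curve the stalk of $\omega_\sX$ at the node has nonzero cohomology in two adjacent degrees --- so $\bbQ_\sX[d]$ is not self-dual and your duality argument does not go through. The standard proof of perversity instead uses that for an effective Cartier divisor inclusion $i$ the functor $i^*[-1]$ is left $t$-exact for the perverse $t$-structure (since the complementary open immersion is affine, so $j_!$ is $t$-exact); iterating along a local regular sequence places $\bbQ_\sX[d]$ in ${}^pD^{\ge 0}$, and the support condition is automatic. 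The conclusion you need is therefore correct, but for a different reason than the one you gave.
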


\begin{proof}
    Take $\cK= \iota_*\bbQ_{\sX}[\dim \sX]$ where $\iota$ is the inclusion of $\sX$. This  is a perverse sheaf by \cite[chap III, lemma 6.5]{kw}.
\end{proof}

\begin{cor}
 If  $\cK$ is a perverse sheaf on a Shimura variety $\sX$ of Hodge type and finite level, then $\chi^{orb}(\sX, \cK)\ge 0$.
\end{cor}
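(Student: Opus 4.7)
The plan is to reduce to Theorem~\ref{thm:mainthm} via the defining embedding of a Hodge-type Shimura variety into a Siegel Shimura variety. Write $\sX = \mathrm{Sh}_K(G,X)$ at finite level $K$. The Hodge-type hypothesis supplies an embedding of Shimura data $(G,X)\hookrightarrow (\mathrm{GSp}_{2g},\mathcal{H}_g^{\pm})$ for some $g\ge 1$. First I would fix a compact open subgroup $K' \subset \mathrm{GSp}_{2g}(\mathbb{A}_f)$ with $K \subset K' \cap G(\mathbb{A}_f)$, producing a morphism
\[
\sX \longrightarrow \mathrm{Sh}_{K'}(\mathrm{GSp}_{2g},\mathcal{H}_g^{\pm})
\]
which is finite, and a closed immersion on each connected component once $K'$ is chosen neat. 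The Siegel modular variety on the right is a fine moduli space of principally polarized abelian varieties with level-$K'$ structure, hence a finite \'etale cover of a union of connected components of $\sA_g = \sA_{g,0}$. Composing, I obtain a finite representable morphism $f:\sX \to \sA_g$.

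Next I would push the perverse sheaf forward along $f$. Because $f$ is finite and representable, $f_*$ is $t$-exact for the perverse $t$-structure, so $f_*\cK \in \Perv(\sA_g)$; and the Euler--Satake characteristic is preserved,
\[
\chi^{orb}(\sX,\cK) \;=\; \chi^{orb}(\sA_g, f_*\cK).
\]
Theorem~\ref{thm:mainthm} applied to $\sA_g = \sA_{g,0}$ (the case $n=0$) then yields $\chi^{orb}(\sA_g, f_*\cK) \ge 0$, which is the desired inequality.

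The main obstacle will be the construction of the morphism $f$ at the given level $K$: one must check that the Hodge-type embedding of Shimura data refines to a finite representable morphism landing in $\sA_g$ itself, rather than in a rigidified auxiliary moduli problem. When $K$ is not small enough to make this direct, I would first replace $\sX$ by a finite \'etale cover $\pi:\sX' \to \sX$ corresponding to a smaller level and invoke the identity $\chi^{orb}(\sX',\pi^*\cK) = \deg(\pi)\cdot \chi^{orb}(\sX,\cK)$ to reduce nonnegativity on $\sX$ to nonnegativity on $\sX'$. The pushforward identity for $\chi^{orb}$ along a finite representable morphism of Deligne--Mumford stacks then follows directly from the definition recalled in Section~\ref{sec:perversesheaves}.
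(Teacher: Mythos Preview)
Your argument is correct and matches the paper's one-line proof: by the Hodge-type hypothesis $\sX$ admits a finite map (the paper simply says it embeds as a closed smooth substack) into $\sA_g$, so one pushes $\cK$ forward---the result is perverse with the same $\chi^{orb}$---and applies Theorem~\ref{thm:mainthm}. Your additional care with the level-structure bookkeeping and the reduction via a finite \'etale cover is a reasonable elaboration of what the paper leaves implicit.
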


\begin{proof}
 Essentially, by definition, $\sX$ will embed as a closed smooth substack of $\sA_{g}$ for some $g$.
\end{proof}

In particular, this theorem gives a simple proof that recovers the sign of the Euler characteristic of $\sM_{g,n}$ and $\sA_{g,n}$. 
We expect that the last corollary should hold for arbitrary Shimura varieties. For compact Shimura varieties, this follows from \cite{aw}.

Before we explain the basic idea of the proofs, we should  note that the above theorem is in fact a combination of special cases of two  separate theorems. The first result involves abelian varieties.

\begin{thm}\label{thm:intro2}
Let $\cK$ be a perverse sheaf on $\sA_{g_1,n_1}\times  \ldots \times \sA_{g_r,n_r}$, and suppose $char(k)=0$. Then $$\chi^{orb}(X,\cK) \geq 0.$$
\end{thm}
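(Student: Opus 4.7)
My approach combines generic vanishing on abelian varieties with the fibered structure of the Siegel moduli. The central input is the theorem of Franecki--Kapranov (sharpened by Kr\"amer--Weissauer): any perverse sheaf on an abelian variety over an algebraically closed field of characteristic zero has nonnegative Euler characteristic. Since each factor $\sA_{g_i,n_i}$ is the total space of an iterated abelian scheme over $\sA_{g_i}$ (obtained by taking $n_i$ sections of the universal family), the hope is to propagate nonnegativity from the fibers up to the full product $X = \prod_i \sA_{g_i,n_i}$.

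Concretely, I would proceed in three steps. First, by d\'evissage in $\Perv(X)$: additivity of $\chi^{orb}$ in short exact sequences, together with the fact that perverse sheaves on a DM stack form an abelian category of finite length, reduces to the case of simple objects $IC(\sY,\cL)$ for an irreducible closed substack $\sY \subseteq X$ and an irreducible local system $\cL$ on a smooth open of $\sY$. Second, pass to a principal level-$N$ cover for $N\geq 3$, which replaces $X$ by a smooth quasi-projective scheme and rescales $\chi^{orb}$ by a positive factor; since the cover is \'etale, perversity of the pullback is preserved. Third, factor the projection $X\to \prod_i \sA_{g_i}$ as an iterated tower of relative abelian schemes (forgetting marked sections one at a time) and push forward along each abelian-scheme step, applying Franecki--Kapranov fiberwise to the perverse restriction at each stage.

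The main obstacle, and really the heart of the matter, is the \emph{base} case: nonnegativity of $\chi^{orb}(\prod_i \sA_{g_i},\cK)$ for a perverse sheaf on a pure product of Siegel moduli. Pointwise Franecki--Kapranov alone is not enough, because the open strata of $\sA_g$ are non-proper and their Euler characteristics can have either sign a priori, so the ``integral over the base'' of a nonnegative constructible fiber function need not itself be nonnegative. My plan is to induct on $\sum_i g_i$ via toroidal compactifications $\bar{\sA}_g$, whose boundary strata are themselves abelian schemes over lower-dimensional $\sA_{g'}$ with $g' < g$. The excision long exact sequence should then express $\chi^{orb}(\sA_g,\cK)$ as a sum of a compactified contribution $\chi(\bar{\sA}_g, j_{!*}\cK)$ (treated by purity, the Decomposition Theorem, and Harder's formula for the constant sheaf case) plus boundary contributions, each of which is handled by the inductive hypothesis combined with the abelian-scheme step of the second paragraph. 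Orchestrating this induction while keeping track of signs across the excision triangle is the principal technical difficulty I expect to confront.
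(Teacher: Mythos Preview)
Your plan has a genuine gap in step~3 that does not seem repairable along the lines you sketch, and step~4 is too vague to constitute an argument.

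\textbf{The gap in step 3.} Suppose you know the PEC property for $S=\prod_i{}_N\sA_{g_i}$ and want to deduce it for the total space $A$ of an abelian scheme $f:A\to S$. You propose to push $K\in\Perv(A)$ forward and invoke Franecki--Kapranov on fibres. But $Rf_*K$ is not perverse: it has perverse cohomology in a range $[-g,g]$, and $\chi(A,K)=\sum_i(-1)^i\chi(S,{}^p\!H^i Rf_*K)$ is an \emph{alternating} sum of nonnegative numbers, which can certainly be negative. Equivalently, Franecki--Kapranov gives $\phi(s):=\chi(A_s,K|_{A_s})\ge0$, and then $\chi(A,K)=\int_S\phi\,d\chi$; but this is integration against a \emph{signed} measure. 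The PEC property for $S$ does not force $\int_S\phi\,d\chi\ge0$ for nonnegative constructible $\phi$: already $\phi=\mathbf{1}_Z$ for a smooth curve $Z\subset S$ of genus $\ge2$ gives a negative integral, yet PEC only controls $(-1)^{\dim Z}\chi(Z)$, not $\chi(Z)$. So fibrewise positivity plus PEC on the base does not yield PEC on the total space.

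\textbf{The vagueness in step 4.} For the base case you appeal to ``purity, the Decomposition Theorem, and Harder's formula'' to control $\chi(\bar\sA_g,j_{!*}K)$. None of these applies to an arbitrary simple perverse sheaf: the Decomposition Theorem concerns direct images under proper maps, and Harder's computation is specific to the constant sheaf. The excision/boundary bookkeeping you anticipate involves alternating signs you give no mechanism to control.

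\textbf{What the paper does instead.} The paper avoids all of this by a direct geometric argument. After passing to level $N\ge3$, one takes Faltings--Chai toroidal compactifications $(\bar X,E)$ of $X=\prod_i{}_N\sA_{g_i,n_i}$ and shows that $\Omega^1_{\bar X}(\log E)$ is nef: for each ${}_N\sA_g$ this follows from the identity $\Omega^1_{\overline{{}_N\sA_g}}(\log E)\cong S^2\overline{F}^1$ together with Fujita--Kawamata semipositivity of the Hodge bundle $\overline{F}^1$; for ${}_N\sA_{g,n}$ one uses that the relative log cotangent of the compactified universal family is pulled back from a Hodge bundle, so the log cotangent of the total space is an extension of nef by nef; and products are handled by direct sums. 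Then the log Dubson--Kashiwara formula of Wu--Zhou gives $\chi(X,K)=\langle\,\overline{CC(K)},\,\bar X\,\rangle$ in $T^*_{\bar X}(\log E)$, with $\overline{CC(K)}$ effective since $K$ is perverse, and Fulton--Lazarsfeld's inequality for nef bundles yields $\langle\,\overline{CC(K)},\,\bar X\,\rangle\ge0$. No induction, no Franecki--Kapranov, no boundary strata.
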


Using moduli spaces $\prescript{}{N}{\sA_{g,n}}$  with level structures $N\ge 3$, we reduce to an analogous statement about varieties rather than stacks. To check
this version of the theorem, we verify that the log cotangent bundle of a product of toroidal compactifications
$$\overline{X}=\overline{{}_{N_1}\sA_{g_1, n_1}}\times  \ldots \times \overline{{}_{N_r}\sA_{g_r, n_r}}$$
is nef. This is reduced 
to a formula of Faltings and Chai, which expresses the
log cotangent bundle of each factor as the second symmetric power of a Hodge
bundle, and the Fujita-Kawamata semipositivity theorem for the latter.
The above theorem now follows from the nefness result together with the log Dubson-Kashiwara theorem and an inequality of Fulton-Lazarsfeld. This last step of the proof is similar to and inspired by the arguments used in the papers \cite{aw, dw, LMW} mentioned above. We also note that the idea of using the Dubson-Kashiwara formula to show positivity of the Euler characteristic of perverse sheaves first appears (in the context of semi-abelian varieties) in the work of Franecki-Kapranov (\cite{fk}). 

Our second  result concerns moduli of  curves.  It is convenient to work with the moduli stack 
 $\sM_{g,n}^c\subset \overline{\sM_{g,n}}$ of genus $g$ stable curves of compact type with $n$ marked points, and its substack $\cM_{g,n}^{sc}$.\footnote{By definition, this is the open substack of compact type curves such that any genus zero vertex is only connected to higher genus vertices. See Section \ref{sec:torelli}.}
One has
 $\sM_{g,n}\subset \sM_{g,n}^{sc} \subset \sM_{g,n}^{c}$.
 Our second main theorem is the following:

\begin{thm}\label{thm:intro3}
 Let $\sX= \sM_{g_1, n_1}^{sc}\times\ldots \times \sM_{g_r, n_r}^{sc}$ where $2g_i-2+n \geq 3$, and suppose $char(k)=0$.
   If   $\cK$ is a perverse sheaf on $\sX$, then
    $$\chi^{orb}(\sX,\cK)\ge 0$$
    
    \end{thm}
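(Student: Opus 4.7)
The plan is to mimic the strategy of Theorem~\ref{thm:intro2}: reduce to the scheme case via level structures, establish nefness of the log cotangent bundle of a suitable compactification, and conclude via the log Dubson--Kashiwara formula together with the Fulton--Lazarsfeld positivity inequality. For $N\ge 3$ I would pass to the level-$N$ cover ${}_N\sM^c_{g_i,n_i}$ of each factor (using a level-$N$ structure on the Jacobian, which is well defined on $\sM^c_{g,n}$ precisely because compact-type Jacobians are abelian varieties), producing a smooth scheme $X=\prod_i {}_N\sM^c_{g_i,n_i}$. I would then take a smooth projective compactification $\bar X$ with simple normal crossing boundary $D$, obtained (after resolution if necessary) as the closure of $X$ inside $\prod_i {}_N\overline{\sM_{g_i,n_i}}$; here $D$ corresponds to the non-separating-node divisors $\Delta_0$ in each factor.

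The crucial and most delicate step is to show that $\Omega^1_{\bar X}(\log D)$ is nef. My approach is to exploit the extended Torelli morphism $\tau:\bar X_i \to {}_N\overline{\sA_{g_i}}^{tor}$, which maps the compact-type locus into the open stratum ${}_N\sA_{g_i}$ since compact-type Jacobians are abelian. Combined with an Abel--Jacobi construction on the marked points (for instance $(C,p)\mapsto (\mathrm{Jac}\,C,(2g-2)p-K_C)$ when $g\ge 2$), $\tau$ extends to a morphism into a toroidal compactification of a product of moduli of pointed polarized abelian varieties, whose log cotangent is $\Sym^2\HH$ by Faltings--Chai and nef by Fujita--Kawamata. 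The codifferential of $\tau$ then gives a map $\tau^*(\Sym^2\HH)\to \Omega^1_{\bar X}(\log D)$ which, by Max Noether's theorem, is surjective on the non-hyperelliptic smooth locus; nefness on this open set follows from the quotient property of nef bundles. To extend nefness across the hyperelliptic locus and the strata containing rational components (where the codifferential drops rank) I would induct on the complexity of the dual graph: $\sM^c_{g,n}$ is stratified by products $\prod_v \sM_{g_v,n_v}$ indexed by vertices of the dual tree, and the induction reduces to $g_v=0$ factors (handled via the classical (log-)ampleness of $\Omega^1_{\overline{\sM_{0,n_v}}}(\log)$) and $g_v=1$ factors (handled via the open embedding of $\sM_{1,n_v}$ into a suitable $\sA_{1,n_v-1}$, a case already covered by Theorem~\ref{thm:intro2}).

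Once nefness is in hand, the argument closes exactly as for Theorem~\ref{thm:intro2}: the log Dubson--Kashiwara formula expresses $\chi(\bar X, j_{!*}\cK)$ as an intersection number built from the characteristic cycle of $\cK$ and Chern classes of $\Omega^1_{\bar X}(\log D)$, the Fulton--Lazarsfeld inequality for Schur polynomials in Chern classes of nef bundles forces this number to be nonnegative, and descent from the level-$N$ cover recovers $\chi^{orb}(\sX,\cK)\ge 0$. The hard part is therefore the nefness step, specifically controlling the Torelli codifferential on the hyperelliptic locus and on strata containing rational components. A cleaner alternative I would also explore is to derive nefness directly from Viehweg's semi-positivity theorem applied to $\pi_*\omega_{\bar{\sC}/\bar X}^{\otimes 2}\bigl(\sum\sigma_i\bigr)$ for the universal stable family with its marked sections, which bypasses the surjectivity issue altogether.
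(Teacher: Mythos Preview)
Your approach diverges from the paper's and has a genuine gap in the crucial nefness step.

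The problem is that nefness of a vector bundle is a \emph{global} condition on curves in $\bar X$; it cannot be verified stratum by stratum. A map $\tau^*(\Sym^2\HH)\to \Omega^1_{\bar X}(\log D)$ that is surjective only on an open set does not let you conclude that the target is nef: the quotient-of-nef-is-nef principle requires surjectivity as a morphism of bundles everywhere. Your proposed ``induction on the dual graph'' produces information about individual strata, but there is no mechanism for patching stratum-wise positivity into nefness of a bundle on the total space. Concretely, along a compact-type boundary divisor $\Delta_{g_1,g_2}$ the Torelli map contracts the one-parameter family obtained by moving the node along a component, so the codifferential has nontrivial cokernel on a divisor, not merely in higher codimension; no amount of stratum-wise analysis repairs this. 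The same issue applies to the Viehweg alternative you mention: $\pi_*\omega^{\otimes 2}(\sum\sigma_i)$ is known to surject onto the log cotangent sheaf with poles along the \emph{full} boundary of $\overline{\sM_{g,n}}$, which would give PEC for $\sM_{g,n}$, not for $\sM^c_{g,n}$; logging only at the non-compact-type divisors is a different (and, as far as the paper is concerned, unproved) statement.

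The paper does \emph{not} attempt to show that any log cotangent bundle on a compactification of $\sM^c_{g,n}$ is nef. Instead it proves the abstract transfer result Theorem~\ref{thm:intro4}: if $f:X\to Y$ is proper birational, an isomorphism over an open $U$, with $Z=X\setminus U$ Cartier, and both $Y$ and $Z$ have the PEC property, then so does $X$. This is established via Beilinson's gluing of perverse sheaves together with a deformation-to-the-normal-cone argument. One then takes $X=\prod_i{}_N\sM^c_{g_i,n_i}$, $Y=\tau(X)\subset\prod_i{}_N\sA_{g_i,n_i}$ (which has PEC by Theorem~\ref{thm:intro2}), and $Z=X\setminus\prod_i{}_N\sM_{g_i,n_i}$, which is a union of images of gluing maps from products with strictly smaller $g$-vector in lexicographic order; PEC for $Z$ then follows by induction. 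Your inductive intuition on dual graphs points in the right direction, but the missing ingredient is precisely a device like Theorem~\ref{thm:intro4} that converts PEC on $Y$ and on $Z$ into PEC on $X$ without ever asserting nefness on $X$.
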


We will define a proper morphism
$$\tau:{}_N\sM_{g,n}^c\to {}_{N}\sA_{g, n}$$
which agrees with the Torelli map on ${}_N\sM_{g,n}$. An application of Theorem \ref{thm:intro2} shows that the conclusion of the previous theorem does hold for $Y=\tau({}_N\sM_{g,n}^{sc})$.
On the other hand, since $\tau$ restricted to ${}_N\sM_{g,n}$ is not an affine open embedding, Theorem \ref{thm:intro3} does not immediately follow. Our strategy is to induct on the boundary strata for which the stack $\cM_{g,n}^{sc}$ is a useful technical tool. The following technical result, 
proved in section \ref{sec:key} using
Beilinson's gluing construction (for perverse sheaves) and a deformation to the normal cone technique, allows us to deduce Theorem \ref{thm:intro3} via induction on boundary strata and Theorem \ref{thm:intro2}.

\begin{thm}\label{thm:intro4}
Suppose we are given a diagram of quasiprojective varieties
 $$
\xymatrix{
  & X\ar^{f}[d] \\ 
 U\ar^{\tilde j}[ru]\ar^{j}[r] & Y
}
$$
where $j,\tilde j$ are open immersions, $f$ is birational,
and $Z= X-U$ is a Cartier divisor. 
Suppose that for all $K'\in \Perv(Y)$ and $K''\in \Perv(Z)$ we have $\chi(Y, K')\ge 0$ and $\chi(Z, K'')\ge 0$.
 Then for all $K\in \Perv(X)$, we have $\chi(X,K)\ge 0$.
\end{thm}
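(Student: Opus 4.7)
Let $K\in\Perv(X)$ and set $A := j^*K\in\Perv(U)$ (which is perverse since $j$ is an open immersion). The argument proceeds in two stages: first reduce to proving $\chi(X,j_{!*}A)\ge 0$ for all $A\in\Perv(U)$, and then establish this using Beilinson's gluing combined with a deformation to the normal cone.

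For the reduction, use the perverse adjunctions to get canonical morphisms ${}^pj_!A \to K \to {}^pj_*A$ whose composite is the canonical map between them, with image $j_{!*}A$. Letting $K' := \operatorname{image}(K \to {}^pj_*A)$ and applying the t-exact functor $j^*$, one sees that both $\ker(K\to K')$ and $\operatorname{coker}(j_{!*}A \hookrightarrow K')$ are perverse sheaves of the form $i_*(-)$ for some perverse sheaves on $Z$. This yields short exact sequences in $\Perv(X)$,
\[
0 \to K_1 \to K \to K' \to 0,\qquad 0 \to j_{!*}A \to K' \to K_2 \to 0,
\]
with $K_1, K_2$ supported on $Z$. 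By additivity of $\chi$ and the hypothesis on $Z$, the problem reduces to showing $\chi(X,j_{!*}A)\ge 0$ for every $A\in\Perv(U)$.

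For this main case, the goal is to establish an equality
\[
\chi(X,j_{!*}A) \;=\; \chi(Y,\tilde j_{!*}A) + \chi(Z,L)
\]
for some perverse sheaf $L\in\Perv(Z)$ built from $A$ and the restriction $f|_Z\colon Z\to W := f(Z)$; the hypotheses then yield the desired nonnegativity. To produce this equality, consider the deformation to the normal cone $\pi\colon \widetilde X \to \bbA^1$ of $Z\subset X$: the generic fiber is $X$ and, since $Z$ is Cartier, the special fiber is $\pi^{-1}(0) = X \cup_Z \bbP(N_{Z/X}\oplus\cO_Z)$, i.e., the $\bbP^1$-bundle $P := \bbP(N_{Z/X}\oplus\cO_Z)$ over $Z$ glued to $X$ along $Z$. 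Use Beilinson's gluing construction to lift $j_{!*}A$ to a perverse sheaf $\widetilde K$ on $\widetilde X$ (the Cartier hypothesis on $Z$ supplies the required local equation via $\pi$), and relate $\chi(X,j_{!*}A)$ to the Euler characteristic of $\widetilde K|_{\pi^{-1}(0)}$ through the standard nearby/vanishing-cycle formalism.

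The special-fiber computation decomposes according to the open/closed stratification of $\pi^{-1}(0)$ into $X$ and $P\setminus Z$. The first piece contributes via $f\colon X\to Y$ and the second via $P\setminus Z \to Z\xrightarrow{f|_Z} W\subset Y$; together with the Beilinson gluing data these combine into $\chi(Y,\tilde j_{!*}A) + \chi(Z,L)$ for the sought $L\in\Perv(Z)$. The main obstacle is this identification: carefully tracking how the contributions from the two strata of $\pi^{-1}(0)$, together with the gluing maps, assemble into $\chi(Y, \tilde j_{!*}A)$ and $\chi(Z,L)$ in a manner compatible with the birational-projective morphism $f$. Once established, the hypotheses of the theorem on $Y$ and $Z$ conclude the proof.
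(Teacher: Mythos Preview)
Your first reduction is sound: any $K\in\Perv(X)$ fits into short exact sequences whose outer terms are either $\tilde j_{!*}(\tilde j^*K)$ or perverse sheaves pushed forward from $Z$, so the hypothesis on $Z$ reduces matters to intermediate extensions. (You have interchanged the names $j$ and $\tilde j$ relative to the diagram, but that is cosmetic.) The rest of the proposal, however, is a sketch in which the essential content is absent. You posit an identity $\chi(X,\tilde j_{!*}A)=\chi(Y,j_{!*}A)+\chi(Z,L)$ for some $L\in\Perv(Z)$ and outline a deformation-to-the-normal-cone argument, but the outline does not lead there. Beilinson gluing on $\widetilde X$ relative to $\pi$ glues data over $X\times\bbG_m$ with data over $\pi^{-1}(0)$; it neither ``lifts $\tilde j_{!*}A$ to $\widetilde X$'' nor compares Euler characteristics of the fibres of $\pi$. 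More fundamentally, the deformation space $\widetilde X$ is built from the pair $Z\subset X$ alone: the map $f$ and the target $Y$ play no role in its construction, so there is no mechanism by which the sheaf $j_{!*}A$ on $Y$ can emerge from a computation on $\pi^{-1}(0)$. Your phrase ``the first piece contributes via $f:X\to Y$'' is precisely the step that needs proof, and you flag it yourself as ``the main obstacle.'' At that point the real work has not started.

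The paper uses the same two tools but in a different architecture, with several further ingredients you are missing. The deformation to the normal cone is used not to compare fibres but to \emph{produce a global regular function}: on $V(Z/X)$ the projection $t$ to $\bbA^1$ cuts out the normal cone, and since $Z$ is Cartier the map $V(Z/X)\to X$ is smooth of relative dimension one, so the PEC property for $X$ is equivalent to a monodromic version on $V(Z/X)$. Beilinson gluing is then applied \emph{with respect to this global $t$} (on $X$ itself a defining equation for $Z$ need not exist), yielding $\chi(K)=\chi(\tilde j_*K_U)+\chi(K_Z)-\chi(\psi^u_gK_U)$. The link to $Y$ comes from a separate trick: after tensoring $K_U$ with a unipotent rank-$a$ local system pulled back from $\bbG_m$ (for $a\gg0$) so that the monodromy on $\psi^u$ becomes trivial, one identifies $\chi(\tilde j_*K_U)-\chi(\psi^u_gK_U)$ with $\chi(Y,j_{!*}K_U)$, using proper base change and compatibility of $\psi$ with $f_*$. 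Finally, an induction on $\dim Y$ bridges the gap between $Z$ and the preimage of a genuine hypersurface on $Y$. None of these steps---the global function via the Rees construction, the monodromic reformulation, the $L_a$-twist, or the dimension induction---appears in your plan.
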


For all but the final section of the paper, the ground field $k$ will have characteristic $0$.
In the final section, we show that the analog of Theorem \ref{thm:mainthm} fails in positive characteristic. Let us now view $\prescript{}{N}\sA_{g} $
as a scheme over $\Spec \bbZ[1/N]$, and 
let $\prescript{}{N}\sA_{g} \otimes \bar{\bbF}_p$ denote the fibre over $\bar{\bbF}_p$. We first observe that the ($\ell$-adic) Euler characteristic is independent of $p \nmid N$ (see \ref{thm:eulercharsameinp}).

\begin{thm}
For all $p \nmid N$, $\chi(\prescript{}{N}\sA_{g} \otimes \bar{\bbF}_p) = \chi(\prescript{}{N}\sA_{g} \otimes \bbC).$
\end{thm}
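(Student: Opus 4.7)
The plan is to exploit the Faltings--Chai toroidal compactification of $\prescript{}{N}\sA_g$, which is defined integrally over $\Spec\bbZ[1/N]$, in order to reduce the independence statement to the case of smooth proper families, where constancy of $\ell$-adic Euler characteristics over a connected base is standard. Fix an auxiliary prime $\ell \nmid N$. Faltings--Chai provides a smooth projective scheme $\overline{\prescript{}{N}\sA_g}$ over $\Spec\bbZ[1/N]$ containing $\prescript{}{N}\sA_g$ as the complement of a divisor $D = \bigcup_i D_i$ with relative normal crossings, each closed stratum $D_I = \bigcap_{i \in I} D_i$ being itself smooth and projective over $\Spec\bbZ[1/N]$.

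Given this input, additivity of compactly supported $\ell$-adic Euler characteristic combined with inclusion--exclusion on the boundary stratification yields, for every algebraically closed field $k$ whose characteristic does not divide $N\ell$, an identity of the form
$$\chi\bigl(\prescript{}{N}\sA_g \otimes k\bigr) = \sum_{I} (-1)^{|I|}\,\chi\bigl(D_I \otimes k\bigr),$$
where Laumon's equality $\chi = \chi_c$ is used implicitly. The problem is thereby reduced to showing that each $\chi(D_I \otimes k)$ is independent of the algebraically closed field $k$.

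For each smooth projective structure morphism $\pi \colon D_I \to \Spec\bbZ[1/(N\ell)]$, smooth and proper base change implies that the higher direct images $R^q\pi_*\bbQ_\ell$ are lisse sheaves on the connected base. Their ranks, and hence the fibrewise $\ell$-adic Betti numbers and Euler characteristic, are therefore constant. Since $\Spec\bbZ[1/(N\ell)]$ admits geometric points above both $\bbC$ and $\bar{\bbF}_p$ for every $p \nmid N\ell$, this yields $\chi(D_I \otimes \bar{\bbF}_p) = \chi(D_I \otimes \bbC)$. The residual case $p = \ell$ is handled by re-choosing the auxiliary prime, using the standard independence of $\ell$ for $\ell$-adic Betti numbers of smooth proper varieties over algebraically closed fields (combining Artin's comparison with singular cohomology on the $\bbC$ fibre and Deligne's purity on the $\bar{\bbF}_p$ fibre).

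The main obstacle, more one of bookkeeping than of substance, is the appeal to the full strength of the Faltings--Chai integral toroidal compactification, in particular the smoothness and projectivity of \emph{all} the boundary strata $D_I$ over $\Spec\bbZ[1/N]$. Once this geometric input is in hand, the comparison of Euler characteristics is a formal consequence of proper and smooth base change on the connected scheme $\Spec\bbZ[1/N]$.
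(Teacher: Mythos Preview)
Your proposal is correct and follows essentially the same approach as the paper. Both arguments invoke the Faltings--Chai toroidal compactification over $\Spec\bbZ[1/N]$ with its relative normal crossings boundary, reduce via inclusion--exclusion on the smooth proper strata $D_I$, and then apply smooth proper base change to conclude constancy of the Betti numbers (and hence the Euler characteristic) along the connected base; the paper packages the last step as a standalone lemma over a strictly henselian base, while you work globally, but the content is identical. One cosmetic point: smooth \emph{properness} of the compactification and its strata is what is needed and what Faltings--Chai guarantee; projectivity depends on the choice of fan and is not required for the argument.
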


On the other hand, the existence of supersingular strata allows one to construct counter-examples to our main theorem in characteristic  p. In particular, we show in Corollary \ref{cor:counterexamplecharp}  that:

\begin{thm}
There exists a sequence of smooth subvarieties $V_{g_i}\subset \prescript{}{N}\sA_{g_i}\otimes \bar{\bbF}_p$ such that
 $(-1)^{\dim V_{g_i}} \chi(V_{g_i})\to -\infty$.
\end{thm}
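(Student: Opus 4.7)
\medskip

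\noindent\textbf{Proof plan.} My plan is to exhibit the $V_{g_i}$ as smooth closed subvarieties of the supersingular locus of $\prescript{}{N}\sA_{g_i}\otimes\bar{\bbF}_p$. The characteristic-zero proof of Theorem \ref{thm:intro2} rests on nefness of the log cotangent bundle of the toroidal compactification, obtained via the Faltings--Chai formula and Fujita--Kawamata semipositivity applied to the Hodge bundle. This nefness fails over $\bar{\bbF}_p$ precisely because the supersingular locus carries complete positive-dimensional subvarieties whose existence is incompatible with it, and these subvarieties will be the source of the counterexamples.

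The first step is the case $g=2$. By Oort the supersingular locus $\sS_2\subset \prescript{}{N}\sA_2\otimes \bar{\bbF}_p$ is projective of dimension $\lfloor 4/4\rfloor=1$, and by Moret--Bailly's classical construction its irreducible components are smooth rational curves. Fixing such a component $C\cong \bbP^1$ gives a smooth closed curve with $\chi(C)=2$ and $(-1)^{\dim C}\chi(C)=-2$, already violating the sign prediction in dimension one.

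The second step amplifies this by taking products. The product of $r$ principally polarized supersingular abelian surfaces is a principally polarized supersingular abelian variety of dimension $2r$, which induces a natural morphism $\prescript{}{N}\sA_2^r\to \prescript{}{N'}\sA_{2r}$ for a suitable refinement $N'$ of $N$. After passing to a sufficiently fine level so as to kill the automorphisms of $\prod A_i$ coming from the $S_r$-permutations and from the individual factors, the restriction of this morphism to $C^r$ becomes a locally closed embedding, producing a smooth closed subvariety $V_{2r}\cong (\bbP^1)^r\subset \prescript{}{N'}\sA_{2r}\otimes \bar{\bbF}_p$ with $\chi(V_{2r})=2^r$. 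Choosing the sequence $r=1,3,5,\dots$, so that $g_i=2r_i\to \infty$ while $\dim V_{g_i}=r_i$ is odd, gives $(-1)^{\dim V_{g_i}}\chi(V_{g_i})=-2^{r_i}\to -\infty$ as required.

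The main obstacle is the level structure bookkeeping in Step 2: at level $N$ the product map is not a closed embedding of stacks, so one must check that passing to a fine enough $N'$ converts it into an honest locally closed embedding of $C^r$. I expect this to be routine — essentially the observation that at high enough level the product of principal level structures on the factors rigidifies the automorphism groups of the polarized product — and if it proves delicate one can equivalently replace $C^r$ by an irreducible component of its preimage in the fine-level cover and apply the same Euler characteristic count.
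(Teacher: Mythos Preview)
Your approach is exactly the paper's: irreducible components of the supersingular locus in ${}_N\sA_2\otimes\bar{\bbF}_p$ are $\bbP^1$'s (Katsura--Oort, building on Moret-Bailly), and taking products of principally polarized abelian surfaces embeds $(\bbP^1)^r$ into ${}_N\sA_{2r}\otimes\bar{\bbF}_p$, so that $(-1)^r\cdot 2^r\to-\infty$ along odd $r$. One correction to your bookkeeping: the product map lives naturally at the \emph{same} level $N$ (a map ${}_N\sA_2^r\to {}_{N'}\sA_{2r}$ with $N'$ a refinement of $N$ does not exist in that direction), and the paper works entirely at level $N\ge 3$ --- there the ordered block level structure on the $N$-torsion already breaks the $S_r$-symmetry and kills the factor automorphisms you worry about, so no passage to a finer level is needed.
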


This is closely related to the fact, first observed by Moret-Bailly \cite{mb}, that the Fujita-Kawamata theorem fails in positive characteristic. See Remark \ref{rem:counterexamplesemipositive}.
 We note that we work with  moduli with level structure, because  it is not clear how to define the Euler-Satake characteristic of a Deligne-Mumford stack in characteristic  $p$. \\

Finally, we briefly outline the contents of the various sections. The second section recalls some background results which will be used in the following sections. In particular, we recall a positivity result of Fulton-Lazarsfeld, a log version of the Dubson-Kashiwara formula, and apply this to prove semi-positivity of certain Hodge bundles. We also give the definition of the Euler-Satake characteristic of constructible sheaves on Deligne-Mumford stacks (following \cite{Beh}), and recall some facts on the perverse sheaves on stacks. In particular, we observe that the constant sheaf  on a local complete intersection stack is a perverse sheaf. In section 3, we prove the signed Euler characteristic property for ${}_N\cA_{g}$ and their products. In section 4, we prove analogous assertions for the universal family over ${}_N\cA_g$, and apply this to prove Theorem \ref{thm:intro2}.  In section 5, we prove Theorem \ref{thm:intro4}. In section 6, we construct an extension of the Torelli map to $\sM^c_{g,n}$.
Finally, these results are applied in section 7 to prove Theorem \ref{thm:intro3} and finish the proof of  Theorem \ref{thm:mainthm}. In section 8, we discuss the aforementioned results in characteristic  p.\\

{\bf Acknowledgements:} 
The first author thanks Laurentiu Maxim and Botong Wang for introducing him to some of these ideas in connection with the Hopf-Singer conjecture.

\section{Background}\label{sec:background}

In this section, we recall some standard results which will be useful in the following. 

\subsection{Fulton-Lazarsfeld for nef bundles}
Let $X$ be a smooth projective variety, and $E$ be a vector bundle of rank $e$.
Since the zero section $\iota:X\hookrightarrow E$ is a regular embedding,
we have a Gysin map $\iota^!: CH_e(E)\to CH_0(X)$. Let
$$\langle C, X\rangle = \operatorname{deg}  \iota^! [C]$$
for an $e$-cycle $C$ on $E$. This should be viewed as the intersection number of $C$ with the zero section.
A conical subset  $C\subset E$ is a Zariski closed subset invariant under the natural $\bbG_m$-action. 
In this setting, one has the following positivity of intersection products result due to Fulton-Lazarsfeld.

\begin{thm}\label{thm:fl}
Suppose that $E$ is a nef vector bundle of rank $e$. Then for any conical subset $C \subset E$ of dimension $e$,
$$\langle C, X \rangle \geq 0$$
\end{thm}
\begin{proof}
 See \cite{lazarsfeld}, 8.2.6.
\end{proof}


\subsection{Log Dubson-Kasiwara}\label{subsect:dubkash}
Suppose that $k=\bbC$.
Let $X$ be a smooth  variety of dimension $d$, and $\rD^b_c(X)$ denote the bounded derived category of constructible sheaves (of $\bbC$-vector spaces) on $X$. Given $\cK \in \rD^{b}_c(X)$, there is a well-defined cycle $CC(\cK) \in Z^{d}(\cotan{X})$, where $Z^{d}(\cotan(X))$ is the group of cycles of codimension $d$. This construction satisfies the following well known properties which we collect below:
\begin{enumerate}
\item The components of $CC(\cK)$ are conic Lagrangian subsets. In particular, they are of the form $\overline{\cotan_{Z_{reg}} X} \subset \cotan{X}$ where $Z \subset X$ is a closed subvariety and $Z_{reg}$ denotes the regular locus.
\item If $\cP$ is a perverse sheaf on $X$, then $CC(\cP)$ is an effective cycle.
\item (Dubson-Kashiwara Formula) Suppose furthermore, that $X$ is projective. Then, one has
$$ \chi(X,\cK) = \langle CC(\cK),X\rangle.$$
\end{enumerate}

In (\cite{wz}), Wu and Zhou give an extension of the Dubson-Kashiwara formula to the setting of log cotangent bundles. Suppose now that $(X,D)$ is a pair where $X$ is a smooth projective variety and $D$ is a simple normal crossings divisor. In this case, one have the standard exact sequence:
$$0 \rightarrow \Omega^1_X \rightarrow \Omega^1_X(\log D) \rightarrow \cO_D \rightarrow 0.$$
In particular, this gives an injective map of the corresponding vector bundles
$$\cotan X \hookrightarrow \cotan X(\log D).$$ If $U := X \setminus D$, then we obtain an open immersion
$$ \cotan U \hookrightarrow \cotan X(\log D).$$
Given $\cK \in \rD^{b}_c(U)$, let $\overline{CC(\cK)}$ denote its closure in $\cotan X(\log D)$. This is a conical subset of dimension $d$. In \cite[theorem 1.6]{wz}, the authors show that
\begin{equation}\label{eq:dubkash}
     \chi(U,\cK) = \langle \overline{CC(\cK)},X\rangle.
\end{equation}

Putting these results together yields the following fact needed below.

\begin{lem}\label{lem:pos}
Let $\bar A$ be a smooth projective variety with an snc divisor $E$ such that $\Omega_{\bar A}^1(\log E)$ is nef. If $P$ is a perverse sheaf on $A= \bar A-E$, then $\chi(A,P)\ge 0$.
\end{lem}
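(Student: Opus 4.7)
The plan is to combine the log Dubson-Kashiwara formula recalled in \S\ref{subsect:dubkash} with the Fulton-Lazarsfeld positivity theorem (Theorem \ref{thm:fl}), applied to the nef vector bundle $\Omega^1_{\bar A}(\log E)$, whose dual is the log cotangent bundle $\cotan \bar A(\log E)$. Set $d = \dim \bar A$; this equals the rank of $\Omega^1_{\bar A}(\log E)$.

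First, since $P$ is a perverse sheaf on the open part $A = \bar A - E$, its characteristic cycle $CC(P) \in Z^d(\cotan A)$ is effective by property (2) of \S\ref{subsect:dubkash}. Writing $CC(P) = \sum m_i C_i$ with $m_i > 0$ and each $C_i$ a conic Lagrangian of dimension $d$ in $\cotan A$, the closure $\overline{CC(P)} = \sum m_i \overline{C_i}$ in $\cotan \bar A(\log E)$ is again an effective conical cycle whose components $\overline{C_i}$ have dimension $d$.

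Next, by the log Dubson-Kashiwara formula \eqref{eq:dubkash} of Wu-Zhou,
\begin{equation*}
\chi(A, P) \;=\; \langle \overline{CC(P)}, \bar A\rangle \;=\; \sum_i m_i \, \langle \overline{C_i}, \bar A\rangle,
\end{equation*}
where the pairing on the right is the degree of the intersection of the cycle with the zero section of $\cotan \bar A(\log E)$.

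Finally, each $\overline{C_i}$ is a conical closed subset of dimension $d$ in the nef bundle $\cotan \bar A(\log E) = (\Omega^1_{\bar A}(\log E))^\vee$---wait, I should be careful: the hypothesis of Theorem \ref{thm:fl} is that the bundle itself is nef. Here the characteristic cycle lives in the cotangent bundle, and the hypothesis of the lemma is that $\Omega^1_{\bar A}(\log E)$ is nef, which is exactly the bundle in which $\overline{CC(P)}$ sits. Therefore Theorem \ref{thm:fl} directly yields $\langle \overline{C_i}, \bar A\rangle \ge 0$ for each $i$, and since $m_i > 0$, summing gives $\chi(A, P) \ge 0$. The main (minor) subtlety is simply checking the dimension/rank matching and the effectivity of the closure, both of which are immediate from the description of characteristic cycles of perverse sheaves.
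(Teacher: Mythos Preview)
Your proof is correct and follows essentially the same route as the paper's: effectivity of $CC(P)$ for perverse sheaves, the log Dubson--Kashiwara formula \eqref{eq:dubkash}, then Fulton--Lazarsfeld (Theorem~\ref{thm:fl}) applied to the nef bundle $\Omega^1_{\bar A}(\log E)$. The only issue is cosmetic: the ``wait, I should be careful'' aside (and the momentary identification of $\cotan \bar A(\log E)$ with the \emph{dual} of $\Omega^1_{\bar A}(\log E)$) should be cleaned up before this goes into a final write-up --- simply state from the outset that $\overline{CC(P)}$ lives in the total space of $\Omega^1_{\bar A}(\log E)$, which is the bundle assumed nef.
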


\begin{proof}

Let $\overline{CC(\cP)} \subset \cotan_{\bar A}(\log E)$ be the characteristic cycle defined in section \ref{subsect:dubkash}. This is effective with components given by closures of conormal bundles $\cotan_V A$ for $V \subset A$. 
%
The log Dubson-Kashiwara formula of Wu and Zhou \eqref{eq:dubkash}
shows that
$$\chi(A,P) = \langle \overline{CC(P)}, A \rangle$$
Therefore it is enough to show that this intersection number  is nonnegative.
 Since the log cotangent bundle is nef.
Theorem \ref{thm:fl} 
yields the desired nonnegativity.

\end{proof}

We will  need the following as well.

\begin{lem}\label{lem:prodnef}
 Let  $X_1$ and $X_2$ be smooth projective varieties with snc divisors $E_i$, such that $\Omega_{X_i}^1(\log E_i)$ are nef.
 Then $\Omega_{X_1\times X_2}^1(\log p_1^*E_1+p_2^* E_2)$ is nef, where $p_i:X_1\times X_2\to X_i$ are the natural projections.
\end{lem}

\begin{proof}
 This follows from \cite[theorem 6.2.12]{lazarsfeld} and the fact that
  $$\Omega_{X_1\times X_2}^1(\log p_1^*E_1+p_2^* E_2) = p_1^* \Omega_{X_1}^1(\log E_1)\oplus
  p_2^* \Omega_{X_2}^1(\log E_2)$$
\end{proof}


\subsection{K\"ahler hyperbolicity}
Given a K\"ahler manifold $X$, the universal cover $\pi:\tilde X\to X$ carries an induced K\"ahler metric.
The manifold $X$ is K\"ahler hyperbolic if the pullback of the K\"ahler form $\pi^*\omega= d\alpha$, where $\alpha$ has bounded norm with respect to the induced metric. We have the following examples \cite[ex 0.3A]{gromov}:

\begin{enumerate}
    \item  A Hermitian locally symmetric space of noncompact type is K\"ahler hyperbolic.
    \item A complex submanifold of a K\"ahler hyperbolic manifold is K\"ahler hyperbolic.
\end{enumerate}

\subsection{Log structures}

It is convenient to formulate some results in the language of log structures.  We will
not define them here, but instead refer the reader to Kato's paper \cite{kato} for details.
Given a smooth $k$-variety $Y$, a divisor $D\subset Y$ with  reduced normal crossings induces a log structure  \cite[ex 1.5]{kato}, which we denote by $(Y,D)$. We refer
to  a log structure of this type as having normal crossing type or just nc. 
If $f:X\to Y$ is a  morphism of smooth varieties, with reduced  divisors with normal crossings $D\subset Y$ and $E\subset X$ such that $f^{-1}D\subseteq E$, then we get a morphism of nc log schemes $f:(X,E)\to (Y,D)$. We say $f$ is log smooth if it is a smooth morphism of log schemes \cite[\S 3.3]{kato}. 
This implies that
$$\Omega^1_{(X,E)/(Y,D)} := \Omega_X^1(\log E)/f^*\Omega_Y^1(\log D)$$
is locally free of rank equal to the relative dimension of $X/Y$.
We note that nc log schemes are log smooth over $\Spec(k)$ (where $\Spec(k)$ has the trivial log structure).

\subsection{Hodge bundles}

Suppose that $(Y,D)$ is a  projective nc log scheme over $k$ as above.
Let $\overline{f}:(X,E)\to (Y,D)$ be a projective log smooth morphism to $(Y,D)$, which restricts to an abelian scheme $f:A\to Y-D$ over $Y -D$.
Let  $\overline{F}^1= \overline{f}_*\Omega^1_{(X,E)/(Y,D)}$.

\begin{thm}[Fujita-Kawamata]\label{thm:semipos}
    The sheaf  $\overline{F}^1$ is a nef vector bundle.
\end{thm}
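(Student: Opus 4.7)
The plan is to identify $\overline{F}^1$ with the first Hodge bundle of a polarized variation of Hodge structure (VHS) of weight one, suitably extended to the boundary, and then invoke the Fujita--Kawamata semipositivity theorem in its standard form. First, I would verify that on $U := Y - D$, the sheaf $F^1 := f_*\Omega^1_{A/U}$ is the $(1,0)$-piece of the Hodge filtration on the locally constant bundle $\cH := R^1 f_*\bbC \otimes_{\bbC} \cO_U$ associated to the abelian scheme $f:A\to U$, equipped with its natural polarization. Since $\overline{f}:(X,E)\to (Y,D)$ is log smooth and projective, the relative log de Rham complex $\Omega^\bullet_{(X,E)/(Y,D)}$ is well-behaved, and standard Hodge theory (as in Illusie--Kato--Nakayama, or Katz for the abelian case) gives a degenerating Hodge-to-de Rham spectral sequence, so that $\overline{F}^1 = \overline{f}_*\Omega^1_{(X,E)/(Y,D)}$ is locally free and sits inside the canonical extension $\overline{\cH}$ of the VHS $\cH$ to $Y$.

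Next, I would observe that for an abelian scheme the local monodromies around components of $D$ are automatically unipotent -- this is built into the log smoothness hypothesis on $(X,E)\to (Y,D)$, and it ensures that $\overline{\cH}$ really is Deligne's canonical extension and that $\overline{F}^1$ agrees with the canonical extension of $F^1$. With this identification in place, the Fujita--Kawamata theorem (in the formulation of Kawamata, or Viehweg's Ch.~2, or Koll\'ar's version) says exactly that the canonical extension of a Hodge bundle $F^p$ for a polarizable VHS with unipotent monodromy along an nc boundary is a nef vector bundle.

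To keep the argument self-contained, I would also sketch the reduction to curves: nefness of a vector bundle on $Y$ can be checked after pullback by every morphism $\varphi:C\to Y$ from a smooth projective curve $C$, and such pullbacks preserve the VHS and its canonical extension (possibly after base change to achieve log smoothness, which does not affect nefness by a standard finite-base-change argument). On a curve, the statement reduces to Griffiths' curvature formula for the Hodge metric on $F^1$, together with an estimate of the boundary behavior via the nilpotent orbit theorem, which controls the degrees of quotient line bundles at the cusps.

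The main obstacle is the careful matching of $\overline{f}_*\Omega^1_{(X,E)/(Y,D)}$ with the canonical extension of $F^1$ at the boundary -- that is, checking that the log differential construction produces precisely Deligne's extension rather than some other lattice. In our setting this is essentially Faltings--Chai's analysis for toroidal compactifications of Siegel space, and in the general log smooth abelian setting it follows from the unipotence of monodromy. Once that identification is made, the appeal to Fujita--Kawamata is immediate.
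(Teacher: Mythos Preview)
Your proposal is correct and follows essentially the same route as the paper: reduce to $k=\bbC$, identify $\overline{F}^1$ with the canonical extension of the Hodge bundle of the polarized VHS $R^1f^{an}_*\bbC$ (using log smoothness to ensure unipotent local monodromy), and then invoke a semipositivity theorem. The only difference is that the paper cites the general result of Fujino--Fujisawa--Saito rather than the classical Fujita--Kawamata statement or a Griffiths-curvature argument, and does not spell out the boundary identification you flag as the main obstacle.
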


\begin{proof}
Without loss of generality, we can assume that $k=\bbC$.
Switching to the analytic topology, 
  $R^1f^{an}_*\bbC$ underlies
a polarized variation of Hodge structure $(V, F^\bullet,\ldots)$ of type $\{(1,0), (0,1)\}$,
with unipotent monodromy about components of $D$.
The  Hodge bundle
$F^1V= f^{an}_*\Omega^1_{A/Y-D}$ extends to   $(\overline{F}^1)^{an}$ over $Y$.
The theorem now follows  from the  much more general result 
due to Fujino-Fujisawa-Saito \cite[theorem 3]{ffs}.  
\end{proof}

We will  refer to $\overline{F}^1$ as the Hodge bundle associated to $f:A\to Y-D$.

\subsection{Euler-Satake characteristic of constructible sheaves on a stack.}\label{sec:euler}

In this subsection, we explain the construction of the Euler-Satake (or orbifold Euler)  characteristic  for constructible sheaves on Deligne-Mumford or DM stacks. We will usually reserve the symbols $\sA, \sM, \sX,\ldots$ for stacks. A $1$-morphism between stacks will often be referred to simply as a morphism or a map.\\

Let $\sX$ be a DM stack of finite type over a field $k$, and $\rD^b_c(\sX,\Lambda)$ denote the derived category of constructible sheaves of $\Lambda$-modules in one of the following settings:
\begin{enumerate}
\item If $k =\bbC$, we take constructible sheaves in the analytic topology, and let $\Lambda$ be a field.
\item If  $k$ is arbitrary, we consider constructible sheaves in the lisse-etale topos (c.f. \cite{laumon, lo}) with $\Lambda = \bbZ/\ell^n\bbZ$ or $\bbQ_{\ell}$.
\end{enumerate}

In any of the aforementioned settings and when $\Lambda$ is a field, one can define the Euler-Satake  characteristic $\chi^{orb}(\cF)$ of $\cF \in \rD^b_c(\sX,\Lambda)$. In the analytic setting, a version of this, with $\cF$ replaced by a constructible function, was  defined by Behrend (\cite{Beh});
$\chi^{orb}$ was considered by several authors for constant coefficients, starting with Satake in the 1950's. The Euler-Satake  characteristic is characterized as follows:

\begin{thm}\label{thm:EulerSatake}
There exists a rational number $\chi^{orb}(\sX,\cF) \in \bbQ$ associated to a 
  DM stack $\sX$ of finite type over a field $k$ and $\cF \in \rD^b_c(\sX)$
  uniquely determined by the following properties:
\begin{enumerate}
\item[(i)] If $\sX=X$ is a scheme, then $\chi^{orb}(\sX,\cF) = \chi(X,\cF)$ i.e. the orbifold Euler characteristic is the usual Euler characteristic. 
\item[(ii)] Let $\sU \subset \sX$ be open, and $\sZ$ the closed complement. Then $\chi^{orb}(\sX,\cF) = \chi^{orb}(\sU,\cF|_{\sU}) + \chi^{orb}(\sZ,\cF|_{\sZ})$.
\item[(iii)] If $f: \sX \rightarrow \sY$ is a finite etale map of degree $d$, then $\chi^{orb}(\sX,f^*\cF) = d\chi(\sY,\cF)$. 

\item[(iv)] Let $\cF$ be a constructible sheaf on $\sX$ and $\cG$ on $\sY$. Then $\chi^{orb}(\sX \times \sY, \cF \boxtimes \cG) = \chi^{orb}(\sX,\cF)\chi^{orb}(\sY,\cG).$
\item[(v)] Let $\cF\to \ccH\to \cG\xrightarrow{[1]}$ be a distinguished triangle in $\rD_c^b(\sX,\Lambda)$, then $\chi^{orb}(\sX,\ccH) = \chi^{orb}(\sX,\cF) + \chi^{orb}(\sX,\cG).$
\item[(vi)] One has $\chi^{orb}(\sX,\cF[d]) = (-1)^d\chi^{orb}(\sX,\cF)$
\end{enumerate}
\end{thm}

\begin{proof}[Sketch]
By \cite[cor 6.1.1]{laumon} and quasicompactness, we can find a finite open cover
$\{\sU_1,\ldots, \sU_N\}$, where each $\sU_i \cong [U_i/G_i] $ is the quotient of a scheme by an etale group scheme. 
Therefore, we can use induction on  $N$  to prove the existence and uniqueness of $\chi^{orb}(X, \cF)$.
When $N=1$, axiom (iii) forces $\chi^{orb}([U_1/G_1], \cF) =\frac{1}{|G_1|} \chi(U_1, \cF_1)$,  where $\cF_1$ is the pull back of $\cF$ to $U_1$. Standard properties of Euler characteristics, will show that this number will satisfy the remaining axioms.
If $N>1$, set $\sZ = \sX\setminus \sU_1$, then $\chi^{orb}(\sZ, \cF|_{\sZ})$ is defined by induction.
Axiom (ii) now forces
$$\chi^{orb}(\sX, \cF) = \chi^{orb}(\sU_1, \cF|_{\sU_1}) + \chi^{orb}(\sZ, \cF|_{\sZ})$$
Since both terms on the right satisfy the above axioms, the same is true for the left hand side.
\end{proof}

We write
$$\chi^{orb}(\sX) = \chi^{orb}(\sX, \Lambda_X)$$

\subsection{Perverse sheaves on stacks}

In (\cite{lo}), Laszlo-Olsson  define perverse sheaves in the lisse-\'etale
topology of an Artin stack of finite type (over a fixed field $k$ of $char(k) =0$). We refer to loc. cit. for the details of this construction. Below, we recall some basic facts which will be useful in the following. We denote by $\Perv(\sX)$ the corresponding category of perverse sheaves (with coefficients in $\bbQ$ or $\bbQ_\ell$).

\begin{enumerate}
\item Let $j: \sU \hookrightarrow \sX$ be an open immersion. Then $j^*$ is $t$-exact.
\item If $\sZ \hookrightarrow \sX$ is a closed smooth irreducible substack (or more generally a local complete intersection), then $i_*(\bbQ[d])$ (with $d = \dim_{\sZ}$) is a perverse sheaf. Both statements follow directly from the definitions given the analogous statements for schemes, and the fact that local complete intersections are stable under smooth base change.
\end{enumerate}

\section{The case of ${}_N \sA_{g}$}

Fix $N\ge 3$, and let ${}_N \sA_{g}$ denote the moduli space of principally polarized $g$ dimensional abelian varieties with level $N$ structure  over $k$. This is a fine moduli space, and it is nonsingular. Let $\toA$ denote a nonsingular toroidal compactification such that $E= \toA-\Alev$ is a divisor with normal crossings (\cite{fc}). In this section, we prove the following theorem:

\begin{thm}\label{thm:chi}
Let $\cP$ be a perverse sheaf on $X={}_{N_1}\sA_{g_1}\times  \ldots \times {}_{N_r}\sA_{g_r}$ where $N_i\ge 3$. Then $\chi(X,\cP) \geq 0$.

\end{thm}

\begin{proof}
This follows from  Lemmas \ref{lem:pos} and \ref{lem:prodnef}, and Lemma  \ref{lem:OmegaAgNef} below.
\end{proof}


\begin{cor}
    If $X\subseteq  {}_{N_1}\sA_{g_1}\times  \ldots \times {}_{N_r}\sA_{g_r}$ ($N_i \geq 3$) is a smooth or lci subvariety, then
    $$(-1)^{\dim X}\chi(X)\ge 0.$$
\end{cor}

\begin{lem}\label{lem:OmegaAgNef}
Let $A:=\Alev$ and $\bar A=\toA$. Then
$\Omega_{\bar A}^1(\log E)$ is nef.    
\end{lem}
\begin{proof}
 Let $f:U\to A$ denote the universal family of abelian varieties, 
and let $\overline{F}^1$ be the associated Hodge bundle over  $\bar A$.
By \cite[chap IV, theorem 7.7]{fc}, 
$$\Omega_{\bar A}^1(\log E) \cong S^2\overline{F}^1$$
Therefore the lemma follows from theorem \ref{thm:semipos} and \cite[thm 6.2.12]{lazarsfeld}.
\end{proof}

We conclude the section with some Hodge theoretic variants.

\begin{thm}\label{thm:chiStrict}
   If $X\subset {}_{N_1}\sA_{g_1}\times  \ldots \times {}_{N_r}\sA_{g_r}$ is a smooth projective variety,
   then for all $p$
   $$(-1)^{\dim X -p}\chi(\Omega_X^p)>0$$
\end{thm}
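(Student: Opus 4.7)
The plan is to derive the theorem from K\"ahler hyperbolicity and Gromov's $L^2$-Hodge theorem. For $N_i \ge 3$, each $\prescript{}{N_i}\sA_{g_i}$ is a fine moduli scheme whose complex analytification is a Hermitian locally symmetric space of noncompact type, namely an arithmetic quotient of Siegel upper half space $\mathbb{H}_{g_i}$ by a torsion-free (neat) congruence subgroup. A finite product of such spaces is again Hermitian locally symmetric of noncompact type, hence K\"ahler hyperbolic by the first example in Subsection 2.3; its closed complex submanifold $X$ is therefore K\"ahler hyperbolic by the second example. Thus $X$ is a compact K\"ahler hyperbolic manifold of complex dimension $n:=\dim X$.

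Let $\tilde X$ denote the universal cover with the lifted K\"ahler metric and set $\Gamma = \pi_1(X)$. Gromov's $L^2$-Hodge theorem for compact K\"ahler hyperbolic manifolds gives the Hodge-refined vanishing of reduced $L^2$-Dolbeault cohomology
$$\bar H^{p,q}_{(2)}(\tilde X) = 0 \quad \text{for } p+q \ne n,$$
with strictly positive von Neumann $\Gamma$-dimension in each middle bidegree $\bar H^{p,n-p}_{(2)}(\tilde X)$. Atiyah's $L^2$-index theorem, applied to the $\bar\partial$-Laplacian on $\Omega^p_X$, identifies
$$\chi(X,\Omega^p_X) = \sum_q (-1)^q \dim_\Gamma \bar H^{p,q}_{(2)}(\tilde X) = (-1)^{n-p}\dim_\Gamma \bar H^{p,n-p}_{(2)}(\tilde X),$$
so that $(-1)^{n-p}\chi(\Omega^p_X) = \dim_\Gamma \bar H^{p,n-p}_{(2)}(\tilde X)\ge 0$, with strict inequality if each middle bidegree is nonzero.

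I expect the main obstacle to be precisely the individual positivity $\dim_\Gamma \bar H^{p,n-p}_{(2)}(\tilde X) > 0$ in each intermediate bidegree. Gromov's argument most directly delivers positivity of the total middle $L^2$-Betti number $\sum_{p+q=n}\dim_\Gamma \bar H^{p,q}_{(2)}(\tilde X)$ together with vanishing off the middle diagonal, and the refinement to individual Hodge types is the delicate point. To handle this I would combine Kobayashi's theorem (ampleness of $\Omega^1_X$ for compact complex submanifolds of quotients of bounded symmetric domains, obtained from the negative holomorphic bisectional curvature of the restricted Bergman metric) with a Nakano-type vanishing for ample vector bundles, forcing the ordinary Hodge numbers $h^{p,q}(X)$ to vanish for $p+q \ne n$; the identity above then reduces to $(-1)^{n-p}\chi(\Omega^p_X) = h^{p,n-p}(X)$, and strict positivity of each middle Hodge number is verifiable by exhibiting nontrivial holomorphic $(p,n-p)$-forms obtained as wedges of pullbacks to $X$ of the nef Hodge bundle $\overline F^1$ from Lemma 4.2 and its Serre dual.
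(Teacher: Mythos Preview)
Your first two paragraphs are exactly the paper's argument: the ambient product of level-$N$ Siegel modular varieties is Hermitian locally symmetric of noncompact type, hence K\"ahler hyperbolic, so the compact submanifold $X$ is K\"ahler hyperbolic, and then one invokes Gromov. The paper's entire proof is the two-line citation ``$X$ is K\"ahler hyperbolic by \cite[0.3A]{gromov}, so the theorem follows from \cite[theorem 2.5]{gromov}.'' What you have written is the unpacking of that citation via Atiyah's $L^2$-index theorem and the $L^2$-Dolbeault vanishing off the middle degree.

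Where your proposal goes wrong is the third paragraph. The workaround you sketch cannot succeed: it is simply false that the ordinary Hodge numbers $h^{p,q}(X)$ vanish for $p+q\neq n$. Already $h^{0,0}(X)=1$, and more generally Nakano vanishing for an ample $\Omega^1_X$ only kills $H^q(X,\Omega^p_X)$ for $p+q>n$, not for $p+q<n$. So your proposed reduction of $(-1)^{n-p}\chi(\Omega^p_X)$ to a single Hodge number $h^{p,n-p}$ collapses. (Kobayashi's ampleness criterion is also not obviously available here, since the ambient quotients are noncompact; but this is secondary to the fatal issue above.)

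The point you flag as the obstacle---strict positivity of $\dim_\Gamma \bar H^{p,n-p}_{(2)}(\tilde X)$ for \emph{each} $p$ separately---is real, but it is precisely what Gromov proves. The refined inequality $(-1)^{n-p}\chi(\Omega^p_X)>0$ is stated in his paper (see 0.4.B there) and established alongside the total-degree version; the paper simply cites this as a black box. So no additional argument is needed beyond the citation, and your attempt to supply one introduced an error rather than filling a gap.
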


\begin{proof}
    By \cite[0.3A]{gromov} $X$ is K\"ahler hyperbolic, therefore the theorem follows from \cite[theorem 2.5]{gromov}.
\end{proof}

\begin{cor}
    If $X\subset {}_{N_1}\sA_{g_1}\times  \ldots \times {}_{N_r}\sA_{g_r}$ is a smooth projective variety, then
    $$(-1)^{\dim X}\chi(X)> \dim X$$
\end{cor}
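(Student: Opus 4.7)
The plan is to reduce the statement to Theorem \ref{thm:chiStrict} via the Hodge decomposition of the topological Euler characteristic. Writing $d = \dim X$, I would start by recalling that since $X$ is smooth projective in characteristic zero, the Hodge-to-de Rham spectral sequence degenerates, so
\[
\chi(X) = \sum_{p,q} (-1)^{p+q} h^{p,q}(X) = \sum_{p=0}^{d} (-1)^p \chi(X, \Omega_X^p).
\]
Multiplying both sides by $(-1)^d$ and noting that $(-1)^{d+p} = (-1)^{d-p}$ gives
\[
(-1)^d \chi(X) = \sum_{p=0}^{d} (-1)^{d-p} \chi(X, \Omega_X^p).
\]

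Next, I would apply Theorem \ref{thm:chiStrict}, which tells me that each summand $(-1)^{d-p}\chi(X, \Omega_X^p)$ is strictly positive for every $p \in \{0, 1, \ldots, d\}$. Since these quantities are integers, each is in fact at least $1$. There are exactly $d+1$ terms in the sum, so we get
\[
(-1)^d \chi(X) \geq d+1 > d,
\]
which is precisely the desired inequality.

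The only subtlety to check is that Theorem \ref{thm:chiStrict} applies in the full range $0 \le p \le d$ (including the endpoints $p=0$ and $p=d$, which via Serre duality correspond to each other), and that $X$ being a smooth projective subvariety of a product of $\Alev$'s indeed satisfies the K\"ahler hyperbolicity hypothesis used in the proof of that theorem---this is immediate from the two examples listed in the K\"ahler hyperbolicity subsection, since a product of Hermitian locally symmetric spaces of noncompact type is K\"ahler hyperbolic and $X$ is a complex submanifold thereof. There is no real obstacle here; the argument is essentially a one-line consequence of the strict Hodge positivity combined with integrality.
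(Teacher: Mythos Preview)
Your proof is correct and follows exactly the same route as the paper's own argument: both use the Hodge decomposition $\chi(X)=\sum_p (-1)^p\chi(\Omega_X^p)$ together with Theorem \ref{thm:chiStrict}. You simply spell out the step the paper leaves implicit, namely that each of the $d+1$ summands $(-1)^{d-p}\chi(\Omega_X^p)$ is a positive integer and hence at least $1$.
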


\begin{proof}
    By the Hodge decomposition
    $$\chi(X)= \sum_{p=0}^{\dim X} (-1)^p\chi(\Omega_X^p)$$
\end{proof}

\begin{rem}
Since the boundary of the Satake compactification of ${}_N\sA_{g}$ has codimension $g$,
it follows that it has plenty of positive dimensional smooth projective subvarieties.
\end{rem}

\section{Euler characteristics of perverse sheaves on $\sA_{g,n}$}

 Let $f:(\overline{X}, E)\to (\overline{Y},D)$ be a projective log smooth map of nc log schemes.
 Set $\cX= (\overline{X},E)$, $\cY= (\overline{Y},D)$,
 $\Omega^1_\cX= \Omega_{\overline{X}}^1(\log E)$ etc. We have an exact sequence

\begin{equation}\label{eq:TXY}
     0  \to  f^*\Omega^1_\cY \to  \Omega^1_\cX\to  \Omega^1_{\cX/\cY} \to 0
\end{equation}

\begin{lem}\label{lem:logsmooth}
 In the above situation, suppose that
 $\Omega^1_\cY$ is nef and that $\Omega_{\cX/\cY}^1=f^*V$, where $V$ is
 nef. Then $\Omega_\cX^1$ is nef.

\end{lem}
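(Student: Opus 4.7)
The plan is to invoke the standard fact that extensions of nef vector bundles are nef, combined with stability of nefness under pullback. Both of these appear as \cite[theorem 6.2.12]{lazarsfeld}, which is already cited in the paper for the product case in Lemma~\ref{lem:prodnef}.

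First I would observe that since $\Omega^1_\cY$ is nef and $f : X \to Y$ is a morphism, the pullback $f^*\Omega^1_\cY$ is nef. Similarly, since $V$ is nef, $f^*V = \Omega^1_{\cX/\cY}$ is nef. So the short exact sequence \eqref{eq:TXY} exhibits $\Omega^1_\cX$ as an extension of two nef vector bundles on the smooth projective variety $X$.

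Then I would apply the extension property of nef bundles: given an exact sequence $0 \to E' \to E \to E'' \to 0$ of vector bundles with $E'$ and $E''$ nef, the middle bundle $E$ is again nef. (One way to see this: nefness of $E$ is equivalent to the tautological line bundle $\cO_{\bbP(E)}(1)$ being nef, and the hypothesis lets one check positivity of the degree of $\cO_{\bbP(E)}(1)$ along any curve by splitting into the two sub-quotients.) Applying this to \eqref{eq:TXY} yields that $\Omega^1_\cX$ is nef.

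There is no real obstacle here: the lemma is essentially a direct packaging of two formal properties of nef bundles (pullback stability and closure under extensions). The only thing to be careful about is that $\Omega^1_\cX$, $f^*\Omega^1_\cY$ and $\Omega^1_{\cX/\cY}$ really are locally free of the expected ranks, so that \eqref{eq:TXY} is an exact sequence of \emph{vector bundles} and not merely coherent sheaves; but this is guaranteed by the assumption that $f$ is log smooth, as recalled in the discussion of log structures preceding the statement.
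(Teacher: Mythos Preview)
Your proof is correct and is exactly the paper's argument: apply \cite[theorem 6.2.12]{lazarsfeld} to conclude from \eqref{eq:TXY} that $\Omega^1_\cX$ is nef, since nefness is preserved under pullbacks and extensions.
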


\begin{proof}
This follows from the fact that the nef property is closed under pull backs and extensions
\cite[thm 6.2.12]{lazarsfeld}.

\end{proof}

Fix $N\ge 3$.
Since $\Alev$ is a fine moduli space, we have a universal abelian scheme ${}_N\sA_{g,1}\to \Alev$.
Let $\Alevn{g,n} = \Alevn{g,1}\times_{\Alev} \ldots \times_{\Alev}  \Alevn{g,1}$ denote the $n$-fold fibre product.
This a fine moduli space, whose $S$-valued points are $g$ dimensional abelian schemes of $S$ with a principal
polarization, level $N$ structure, and $n$ sections. We let $\sA_{g} = [\Alevn{g }/PSp_{2g}(\bbZ/N\bbZ)]$,
$\sA_{g,n} = [\Alevn{g,n }/PSp_{2}(\bbZ/N\bbZ)]$
denote the moduli  stacks of principally polarized $g$ dimensional abelian schemes (with $n$ sections).

\begin{thm}\label{thm:NAgn}
Let $X= \Alevn{g_1, n_1}\times \ldots  \times \Alevn{g_r, n_r}$.
Then there is an nc log scheme $(\overline{X}, E)$
such that $\Omega_{(\overline{X}, E)}^1$ is nef, $\overline{X}$ is proper and  $X=\overline{X}\setminus E$.
  If $\cP$ is a perverse sheaf on $X$, then $\chi(X,\cP)\ge 0$.  
\end{thm}

\begin{proof}

By work of Mumford et. al., we can find a smooth toroidal compactification $\overline{\Alevn{g_i}}$ of $\Alevn{g_i}$ such that boundary 
has simple normal crossings \cite[chap IV]{fc}.
Let
$$\overline{Y} = \overline{\Alevn{g_1}}\times \ldots  \times \overline{\Alevn{g_r}}$$
be a product of these  compactifications, and $D$ denote the simple normal crossings boundary divisor.
By a theorem of Faltings-Chai \cite[chap VI, thm 1.1]{fc}, we can find an nc log scheme
$(\overline{X}, E)$ with a morphism $f: (\overline{X},E) \to (\overline{Y},D)$ such that
\begin{enumerate}
    \item  $\overline{X}-E= X$ and $f|_{X}$ coincides with the product of canonical maps $\Alevn{g_i,n}\to \Alevn{g_i}$.
    \item  $f$ is toroidal \cite[chap VI, thm 1.13]{fc}, and therefore
log smooth by \cite[prop 3.4]{kato}.
\item $\Omega^1_{(\overline{X}, E)/(\overline{Y},D)}$ is the pull back of a Hodge  bundle on $\overline{Y}$.

\end{enumerate}

 The theorem now follows from Lemmas \ref{lem:pos}, \ref{lem:OmegaAgNef} and \ref{lem:logsmooth}, and Theorem \ref{thm:semipos}.

\end{proof}

\begin{thm}\label{thm:Agn}
    If $P$ is a perverse sheaf on $\sX= \sA_{g_1,n_1}\times\ldots \times\sA_{g_r, n_r}$, then
    $$\chi^{orb}(\sX,P)\ge 0$$
In particular,
    if $\sY\subset \sX$ is a smooth closed irreducible substack, then
    $$(-1)^{\dim \sY}\chi^{orb}(\sY)\ge 0$$
    Furthermore, we have strict inequality
    $$(-1)^{\dim \sY}\chi^{orb}(\sY)> 0$$
    if $\sY\subset \sA_g$ is proper.
    
\end{thm}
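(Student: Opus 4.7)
The plan is to deduce this stacky statement from Theorem~\ref{thm:NAgn} by passing to a level-$N$ cover, and to derive the final strict inequality from the K\"ahler hyperbolicity input of Theorem~\ref{thm:chiStrict}.

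Concretely, I would fix an integer $N\ge 3$ and form the finite \'etale cover
$$f\colon X:={}_{N}\sA_{g_1,n_1}\times\cdots\times {}_N\sA_{g_r,n_r}\longrightarrow \sX$$
of degree $d=\prod_{i}|\Sp_{2g_i}(\bbZ/N)|$. Because $f$ is \'etale, $f^*$ is $t$-exact, so $f^*P\in\Perv(X)$. Property~(iii) of the Euler--Satake characteristic then reads $\chi(X,f^*P)=d\cdot \chi^{orb}(\sX,P)$, and by Theorem~\ref{thm:NAgn} the left hand side is nonnegative; dividing by $d>0$ proves the first assertion. For the ``in particular'' statement, apply this to $\cK=\iota_*\bbQ_{\sY}[\dim \sY]\in\Perv(\sX)$ (which is perverse because $\sY$ is smooth, as in Corollary~\ref{cor:maincor}) and use properties~(ii) and~(vii) of $\chi^{orb}$ to identify $\chi^{orb}(\sX,\cK)$ with $(-1)^{\dim\sY}\chi^{orb}(\sY)$.

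For the strict inequality when $\sY\subset\sA_g$ is proper, I would form the pullback $\tilde \sY:= f^{-1}(\sY)\subset{}_N\sA_g$. Since $\sY\hookrightarrow\sA_g$ is a closed immersion (hence representable) and $f$ is finite \'etale, $\tilde \sY$ is a scheme, smooth of dimension $\dim\sY$, and proper. As ${}_N\sA_g$ is quasi-projective, each connected component $\tilde\sY_j\subset\tilde\sY$ is then a smooth projective subvariety of ${}_N\sA_g$. Applied to each $\tilde\sY_j$, the K\"ahler hyperbolicity corollary to Theorem~\ref{thm:chiStrict} yields $(-1)^{\dim\sY}\chi(\tilde\sY_j)>0$. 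Now $\tilde\sY_j\to \sY$ is finite \'etale of some degree $d_j>0$, so by property~(iii), $\chi(\tilde\sY_j)=d_j\,\chi^{orb}(\sY)$; combining these gives $(-1)^{\dim\sY}\chi^{orb}(\sY)>0$.

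The steps above are each essentially formal. If anything counts as a mild obstacle, it is keeping track of the two separate reductions to the scheme case --- one via the finite \'etale level-$N$ cover for the general perverse-sheaf statement, and a second to the individual irreducible components of $\tilde\sY$ for the strict inequality (the cover of an irreducible stack need not be irreducible, but K\"ahler hyperbolicity is inherited component by component). Once these bookkeeping details are in place, everything follows from the already-established Theorems~\ref{thm:NAgn} and~\ref{thm:chiStrict}.
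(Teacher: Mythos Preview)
Your proposal is correct and matches the paper's approach exactly: reduce to the level-$N$ cover via property~(iii) of $\chi^{orb}$ and invoke Theorem~\ref{thm:NAgn} for the inequality, then use Theorem~\ref{thm:chiStrict} (and its corollary) on the smooth projective components of the pullback for the strict inequality. The paper's one-line proof says precisely this (citing $\sX=\prod[\Alevn{g_i,n_i}/PSp_{2g_i}(\bbZ/N\bbZ)]$, property~(iii), and the two theorems); your version simply supplies the details, including the componentwise argument for $\tilde\sY$, which is the right way to handle it.
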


\begin{proof}
Since $\sX = \prod [\Alevn{g_i,n_i }/PSp_{2g_i}(\bbZ/N\bbZ)]$,  this follows
from Theorem \ref{thm:EulerSatake} (iii) and Theorems \ref{thm:chi}, \ref{thm:chiStrict} and \ref{thm:NAgn}.
    
\end{proof}

\begin{rem}
The theorem cannot be improved to a strict inequality in general, because
if $n>0$, then $\chi^{orb}(\sA_{g,n})=0$ (since $\Alevn{g,n}$ is an abelian scheme).
However, we do not know if there is 
an example of $X\subset \sA_g$ with $\chi(X)=0$.
\end{rem}

 Recall that a  morphism $f:X\to Y$ is called semismall if for all $m$
$$ 2m   \le  \dim Y-\dim\{y\in Y\mid \dim f^{-1}(y)=m\}$$

\begin{cor}\label{cor:semismall}
    If $\pi:X\to \sA_{g,n}$ is a semismall proper map from a smooth irreducible stack, then 
    $$(-1)^{\dim X}\chi^{orb}(X)\ge 0$$
\end{cor}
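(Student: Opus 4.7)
The plan is to deduce this from Theorem \ref{thm:Agn} by exhibiting an appropriate perverse sheaf on $\sA_{g,n}$ whose Euler characteristic computes $(-1)^{\dim X}\chi^{orb}(X)$. The natural candidate is the pushforward $\pi_*\bbQ_X[\dim X]$, and the key input is the classical fact that for a proper semismall morphism with smooth source, this pushforward is perverse.

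First I would observe that because $X$ is smooth and irreducible, $\bbQ_X[\dim X]$ is a perverse sheaf on $X$ (property (2) of the perverse sheaf formalism on stacks recalled in \S\ref{sec:perversesheaves}). Then, since $\pi$ is proper and semismall, $\pi_*\bbQ_X[\dim X]$ is a perverse sheaf on $\sA_{g,n}$. In the scheme setting this is the standard semismallness criterion (see e.g.\ \cite{bbd}); for Deligne--Mumford stacks one checks it smooth-locally after replacing $\sA_{g,n}$ by an etale chart, where $\pi$ restricts to a semismall proper map of schemes, and then uses that perversity on a DM stack is defined by smooth-local perversity in the lisse-\'etale topology (Laszlo--Olsson).

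Applying Theorem \ref{thm:Agn} with $\cP = \pi_*\bbQ_X[\dim X]$ gives
\[
\chi^{orb}\bigl(\sA_{g,n},\, \pi_*\bbQ_X[\dim X]\bigr)\;\ge\;0.
\]
On the other hand, properness of $\pi$ combined with properties (ii), (iii), (vi), (vii) of the Euler--Satake characteristic (which together imply invariance under proper pushforward, via stratifying by quotient charts and using the Leray spectral sequence) yields
\[
\chi^{orb}\bigl(\sA_{g,n},\, \pi_*\bbQ_X[\dim X]\bigr) \;=\; \chi^{orb}\bigl(X,\, \bbQ_X[\dim X]\bigr) \;=\; (-1)^{\dim X}\chi^{orb}(X).
\]
Combining the two displays gives the result.

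The only non-routine ingredient is checking that semismallness implies perversity of the pushforward in the stack setting; once that is granted, everything else is bookkeeping with the axiomatics of $\chi^{orb}$. Since semismallness can be verified on a smooth atlas and perversity on a DM stack is smooth-local, the reduction to the scheme case is immediate, so I do not foresee a real obstacle.
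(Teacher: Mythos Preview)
Your proposal is correct and follows exactly the paper's approach: push forward the shifted constant sheaf, invoke the semismallness criterion to see that $\pi_*\bbQ_X[\dim X]$ is perverse (the paper cites \cite[prop 4.2.1]{dm} for this), and then apply Theorem~\ref{thm:Agn}. Your write-up is in fact more careful than the paper's one-line proof, particularly in addressing the passage to stacks and the identification of the Euler characteristics under proper pushforward.
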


\begin{proof}
    By \cite[prop 4.2.1]{dm} $\pi_*\bbQ[\dim X]$ is perverse.
\end{proof}

We will give a concrete application of this below in Proposition \ref{prop:M3semi}.

\section{Key Euler Characteristic Inequality}\label{sec:key}

Given a variety $X$, we say that $X$ has the positive Euler characteristic property, or $PEC$-property, or simply that $X$ is $PEC$, if for every perverse sheaf $\cK \in \Perv(X)$, $\chi(X,\cK) \geq 0$. If $X$ comes equipped with a $\bbG_m$-action, we will use similar terminology in the setting of monodromic perverse sheaves (see \ref{subsec:monodromic}): $X$ has the $PEC_m$ property if for all $\cK \in Perv_m(X)$, $\chi(X,\cK) \geq 0$. In the monodromic setting, we shall say that $X$ has the $PEC_m[d]$-property if $\chi(X,\cK[d])\geq 0$ for all $\cK \in \Perv_m(X)$. Clearly $PEC_m[d]$ is the same as $PEC_m$ if $d$ is even, and $PEC_m[-1]$ otherwise.\\

By an admissible datum, we mean a $5$-tuple  $(X,Y,f,U,Z)$, where
 $f: X \rightarrow Y$ is a (not necessarily proper) birational morphism of irreducible quasiprojective varieties which is an isomorphism over $U \xhookrightarrow{j} Y$, and such that $Z = X \setminus U$ is an (effective) Cartier divisor on X. We say that the datum admits a $\bbG_m$-action if $\bbG_m$ acts on $X, Y, U, Z$ and $f$ is equivariant with respect to this action. The main result of this section is the following theorem.

\begin{thm}\label{thm:key1}
Let $(X,Y,f,U,Z)$ be an admissible  datum as above. Suppose that $Y$ and $Z$ have the $PEC$-property. Then $X$ also has the $PEC$-property. The analogous assertion holds in the monodromic setting, i.e.
when the datum admits a $\bbG_m$-action and $Y$ and $Z$ have the $PEC_m[d]$-property, then $X$  has the $PEC_m[d]$-property. 
\end{thm}

Below, we shall prove the theorem via induction on dimension, and henceforth assume that it holds for admissible data $(X',Y',f',U',Z')$ with $\dim(Y') <\dim Y=n$. In the first two subsections, we recall some preliminary material on Belinson's gluing construction for perverse sheaves, and their monodromic variants. The proof of the main theorem then proceeds via two reductions given in the following two subsections.

\subsection{Beilinson's gluing construction}\label{thm:beilinson}
We recall a basic result of Beilinson \cite{beilinson, morel, reich}.
We will follow the  notation from \cite{bbd} and write $f_*$ etc. for the {\em derived} direct image. Also, $\psi_f, \phi_f$ denotes derived nearby/vanishing cycle functors shifted by $[-1]$ (so as to preserve perverse sheaves). Let $\psi^u_f \subset \psi_f, \phi^u_f\subset \phi_f^u$ denote the maximal subfunctors on which the monodromy $T$ acts unipotently. The $can$ and $var$ maps restrict to these. Let $X$ be a variety with a regular function  $h:X \to \bbA^1$. Let  $Z := h^{-1}(0)$, $U= X-Z$, and let $i: Z \hookrightarrow X, j:U \hookrightarrow X$ denote the canonical inclusions. In this setting, one has the following fundamental result due to Beilinson.
 
\begin{thm}[Beilinson]
 There is an equivalence between the category of perverse sheaves
 $\Perv(X)$ and the  category 
 
\begin{equation*}
\begin{split}
B_h( U, Z) &= \{(K_U, K_Z, c, v)\mid K_U\in \Perv(U), K_Z\in \Perv(Z)\\
& c: \psi_h^u K_U\to  K_Z, v:K_Z\to \psi_h^u K_U, cv= I-T\} 
\end{split}
\end{equation*}
\end{thm}

In one direction, $K\in \Perv(X)$ goes to $(j^*K, \phi_h^u K, can, var)$.
The inverse is more complicated. Beilinson constructs a ``maximal extension" functor
$\Xi_h: \Perv(U)\to \Perv(X)$
which fits into exact sequences
\begin{equation}\label{eq:Xi}
0\to i_*\psi^u_h K_U\xrightarrow{\alpha}\Xi_h K_U\to j_* K_U\to 0
\end{equation}

$$0\to j_! K_U\to \Xi_h K_U\xrightarrow{\beta} i_*\psi^u_h K_U\to 0$$
The inverse sends an object $(K_U, K_Z, c, v)$ to the  cohomology of the complex

\begin{equation}\label{eq:beil}
K=H^0(i_*\psi^u_h K_U\xrightarrow{(\alpha,c)} \Xi_h K_U\oplus i_*K_Z\xrightarrow{(\beta,-v)} i_*\psi^u_h K_U)
\end{equation}
(where this is centered in degree 0).

\begin{rem}
We observe  that the first map in \eqref{eq:beil} is injective, and the second is surjective.
\end{rem}

\begin{lem}\label{lem:beil}
With the above notation,
    let $K\in \Perv(X)$, and let $(K_U, K_Z, c,v)$ be the corresponding object in $B_h(U,Z)$.
 Then
 $$\chi(K) = \chi( j_* K_U) + \chi(K_Z)-\chi(\psi_{ h}^u K_U)$$
\end{lem}

\begin{proof}
From   \eqref{eq:beil}, \eqref{eq:Xi}, and the above remark,  we obtain
    \begin{equation*}
\begin{split}
\chi(K) 
&= \chi(\ker ( \beta,- v)) - \chi(\psi_{ h}^u K_U)\\
 &= \chi(\Xi_{ h} K_U) + \chi(K_{Z}) - 2\chi(\psi_{ h}^u K_U)\\
 &= \chi( {j}_* K_U) + \chi(K_Z)-\chi(\psi_{ h}^u K_U)\\
\end{split}
\end{equation*}

\end{proof}
\subsection{Monodromic Sheaves}\label{subsec:monodromic}
Let $X$ be a variety over an algebraically closed field $k$ such that $char(k) = 0$. Suppose that $X$ comes equipped with a $\bbG_m$-action denoted by $\theta: \bbG_m \times X \rightarrow X.$ For $\lambda \in k^\times$, let 
$\theta_{\lambda} := \theta(\lambda,-) : X \rightarrow X$, and for $x \in X(k)$, let $w_x = \theta(-,x):\bbG_m \rightarrow X$. 
\begin{example}
A basic example of such data arises from a {\it cone}. More precisely, given a variety $X$ as above and a quasi-coherent graded $\cO_X$-algebra $\cA = \oplus_{i \geq 0} \cA_i$ of finite type generated in degree 1 such that $\cA_0 = \cO_X$, the usual $\Spec$ construction gives rise to an affine morphism $\Spec(\cA) \rightarrow X$. The scheme $\Spec(\cA)$ has a natural $\bbG_m$-action coming from the grading. Most of our examples will arise in this fashion.
\end{example}

A constructible sheaf (respectively complex of sheaves) $\cK$ on $X$ is {\it monodromic} if $w_x^*(\cK)$ (respectively the cohomology sheaves of $w_x^*(\cK)$) are locally constant for all $x \in X(k)$. We let $Perv_m(X)$ denote the full subcategory of monodromic perverse sheaves.

\begin{lem}(\cite{Verdier}, 3.2)
With notation as above, the following are equivalent:
\begin{itemize}
\item For all $\lambda \in k^{\times}$, $\theta_{\lambda}^*\cK \isom \cK$.
\item $\cK$ is monodromic.    
\end{itemize}
\end{lem}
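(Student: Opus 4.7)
The plan is to prove the two implications separately. For the direction asserting that ``$\theta_\lambda^*\cK\cong \cK$ for all $\lambda\in k^\times$ implies $\cK$ is monodromic,'' I would start by verifying the identity $\theta_\lambda\circ w_x = w_x\circ \tau_\lambda$, where $\tau_\lambda:\bbG_m\to\bbG_m$ denotes multiplication by $\lambda$; this is a direct computation from the definitions. Pulling back the hypothesized isomorphism along $w_x$ would then yield
$$\tau_\lambda^*(w_x^*\cK) = w_x^*(\theta_\lambda^*\cK) \cong w_x^*\cK$$
for every $\lambda\in k^\times$. Setting $\cF := w_x^*\cK$, I would observe that each cohomology sheaf $\mathcal{H}^i(\cF)$ is a constructible sheaf on $\bbG_m$ and hence has a finite (possibly empty) singular locus $S_i\subset\bbG_m$ off which it is locally constant. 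Since any abstract isomorphism $\tau_\lambda^*\mathcal{H}^i(\cF)\cong \mathcal{H}^i(\cF)$ preserves this singular locus, $\tau_\lambda(S_i) = S_i$ for all $\lambda\in k^\times$; because $\bbG_m$ admits no nonempty finite translation-invariant subset, $S_i$ must be empty, and so $w_x^*\cK$ has locally constant cohomology.

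For the converse, I would aim to prove the stronger statement that $\theta^*\cK \cong p^*\cK$ on $\bbG_m\times X$, where $p:\bbG_m\times X\to X$ is the projection; pulling back along the section $i_\lambda:X\hookrightarrow \bbG_m\times X$, $x\mapsto (\lambda,x)$, would then give $\theta_\lambda^*\cK \cong \cK$. The key input is that monodromicity of $\cK$ forces $\theta^*\cK$ to have cohomology sheaves that are locally constant along the $\bbG_m$-direction, since the restriction to each $\bbG_m\times\{x\}$ recovers $w_x^*\cK$, which is locally constant by hypothesis. The same property holds trivially for $p^*\cK$, and both sheaves agree on the unit section $\{1\}\times X$ where $\theta$ restricts to the identity. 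A rigidity argument---to the effect that a constructible complex on $\bbG_m\times X$ with cohomology locally constant along the $\bbG_m$-factor is pulled back from its restriction along any section---would then produce the required isomorphism.

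The hard part will be making this last rigidity assertion precise. Concretely, one must show that the natural forgetful functor from the category of constructible complexes on $\bbG_m\times X$ that are ``constant along $\bbG_m$'' (in the appropriate sense) to $\rD^b_c(X)$ is an equivalence; equivalently, that a suitable adjunction map is an isomorphism on the relevant subcategory. This would require careful use of smooth or proper base change together with the connectedness of $\bbG_m$. The forward direction, by contrast, reduces cleanly to the elementary observation about translation-invariant finite subsets of $\bbG_m$.
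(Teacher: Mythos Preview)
Your forward implication is clean and correct: pulling back along $w_x$ converts the hypothesis into $\tau_\lambda^*(w_x^*\cK)\cong w_x^*\cK$ for all $\lambda$, and the translation-invariance of the singular locus of a constructible sheaf on $\bbG_m$ then forces that locus to be empty.

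The converse, however, has a genuine gap: the ``stronger statement'' $\theta^*\cK \cong p^*\cK$ that you set as your target is \emph{false} for monodromic $\cK$ in general. Take $X=\bbG_m$ acting on itself by multiplication and let $\cK$ be a nontrivial rank-one local system. Then $\cK$ is monodromic (each $w_x$ is an automorphism of $\bbG_m$), but $\theta^*\cK$ and $p^*\cK$ are non-isomorphic local systems on $\bbG_m\times\bbG_m$: restricted to a fibre $\bbG_m\times\{x\}$, the former is $w_x^*\cK\cong\cK$ (still nontrivial) while the latter is the constant sheaf with stalk $\cK_x$. Equivalently, the rigidity principle you invoke---that a constructible complex on $\bbG_m\times X$ with cohomology locally constant along the $\bbG_m$-factor is pulled back from a section---already fails when $X$ is a point and the complex is a nontrivial local system on $\bbG_m$.

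What is actually true, and what suffices for the lemma, is the weaker assertion that the \emph{restrictions} $i_\lambda^*(\theta^*\cK)$ for varying $\lambda\in\bbG_m$ are all abstractly isomorphic to one another; this follows from local constancy along the connected variety $\bbG_m$ without any claim that $\theta^*\cK$ is globally a pullback from $X$. The paper does not spell this out---it simply cites Verdier's argument for constructible sheaves on cones, notes that the same proof works for arbitrary $\bbG_m$-actions, and reduces the case of complexes to that of their cohomology sheaves---but if you want to write out the converse yourself you must aim for this weaker conclusion rather than the one you stated.
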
    
\begin{proof}
In the setting of {\it cones} and constructible sheaves, this is contained Proposition 3.2 of \cite{Verdier}. The same proof goes through in the current setting. Moreover, the case of complexes can be reduced to sheaves by passing to cohomology sheaves.
\end{proof}   

Let $X$ be a variety equipped with a $\bbG_m$-action, and $h: X \rightarrow \bbA^1$ a morphism which is equivariant for the standard $\bbG_m$-action on $\bbA^1$. Then $Z:= h^{-1}(0)$ and $U=X\setminus Z$ are invariant under $\bbG_m$. In this case, we may consider the categories $\Perv_m(X), \Perv_m(U),$ and $\Perv_m(Z)$.

\begin{thm}\label{thm:beilisonmonodromic}
With notation as above, $\Perv_m(X)$ is equivalent to the full subcategory $B_{h,m}(U,Z) \subset B_{h}(U,Z)$ consisting of objects $(K_U, K_Z, c,v)$ such that $K_U$ and $K_Z$ are monodromic.
\end{thm}
\begin{proof}
First, note that $j^*\cK$ is monodromic if $\cK$ is monodromic. This follows directly from the previous lemma. Moreover, the vanishing cycles along $h$ of a monodromic sheaf is also monodromic. This follows from the previous lemma and the fact that vanishing cycles commutes with pull-back along acyclic morphisms. Since smooth morphisms and isomorphisms are such morphisms the result follows from the previous lemma (apply the previous remarks to $\theta_\lambda$).
\end{proof}

\begin{rem}\label{rem:mondromicundershreik}
Suppose that $f: X \rightarrow Y$ is compatible with $\bbG_m$-actions. Then $f_{!}$ preserves monodromic sheaves. This follows from the previous lemma and the fact that $f_{!}$ is compatible with base change. Namely, one has the cartesian square:
$$
\xymatrix{
X \ar[r]^{\theta_{\lambda}} \ar[d]^f & X \ar[d]^f \\
Y \ar[r]^{\theta_{\lambda}}  & Y}
$$
In particular, this holds for $f_*$ when $f$ is  proper, and $j_!$ where $j$ is an open immersion.
\end{rem}

\subsection{Proof of Theorem \ref{thm:key1}: Preliminary reduction}

In this subsection, we assume the existence of a non-constant regular function $g:Y\to \bbA^1$ such 
that  $Z = h^{-1}(0)$, where $h= g\circ f$. Let $D = f(Z)= g^{-1}(0)$. 
We prove Theorem \ref{thm:key1} under these conditions. Below, $\tilde{j}: U \hookrightarrow X$ denotes the natural inclusion. \\

We begin by recalling a result of Laumon (\cite{laumonEuler}). For a variety, let $\rK_0(X)$ denote the Grothendeick group of constructible sheaves. Given a morphism $f: X \rightarrow Y$, we have induced morphisms $Rf_*, Rf_!: \rK_0(X) \rightarrow \rK_0(Y)$. We also have $f^{-1}: \rK_0(Y) \rightarrow \rK_0(X)$. For $\cF$ in the derived category of constructible sheaves on $X$, $[\cF]$ denotes its class in $\rK_0(X).$

\begin{thm}(\cite{laumonEuler})\label{thm:laumon}
With notation as above: $[Rf_*(\cF)] = [Rf_!(\cF)]$ in $\rK_0(Y)$ for all $[\cF] \in \rK_0(X)$.
\end{thm}

We have the following easy consequence which will be used below. 

\begin{lem}\label{lem:laumon}
Let $f: X \rightarrow Y$ be a morphism and $D \xhookrightarrow{i} Y$ be a closed subscheme. Let $Z = f^{-1}(D) \xhookrightarrow{\tilde{i}} X$ and $f_Z: Z \rightarrow D$ denote the restriction of $f$ to $Z.$ Then for any $\cF \in \rK_0(X)$ we have $[i^{-1}Rf_*\cF] = [Rf_{Z,*}(\tilde{i}^{-1}\cF)] \in \rK_0(D)$. 

\end{lem}

Note that if $f$ is proper, then the lemma is an immediate from proper base change. 

\begin{proof}
We have $[Rf_*(\cF)] = [Rf_{!}\cF]$ in $\rK_0(Y)$ by Theorem \ref{thm:laumon}. It follows that $[i^{-1}Rf_*\cF] = [i^{-1}Rf_!\cF]$ in $\rK_0(D)$. By base change for $f_{!}$, we have 
$[i^{-1}Rf_!\cF] = [Rf_{Z,!}(\tilde{i}^{-1}\cF)]$. Another application of Theorem \ref{thm:laumon} gives the desired conclusion.
\end{proof}

\begin{lem}\label{lem:eulerchardiffpos}
Let $(X,Y,f,U,Z)$ be an admissible datum such that $Y$ has the $PEC$-property.
 If $K_U\in \Perv(U)$, then
 $$\chi(\tilde j_* K_U) -\chi(\psi_{h}^u K_U)\ge 0.$$
\end{lem}

\begin{proof}
Without loss of generality, we can assume that the ground field $k=\bbC$.
 Given $a\in \bbN$, let $L_a$ be the pull back to $U$ of the local system on $\bbC^*$ with monodromy given by the $a\times a$ Jordan
 block
 $$
\begin{pmatrix}
 1 & 1& 0 & 0&\ldots\\ 0 &1 & 1 &0 &\ldots\\ 0&0 & 1 &1 &\ldots\\
  && \ldots &&
\end{pmatrix}
 $$
 We choose (and fix) $a\gg 0$. We may  replace $K_U$ by $K_U\otimes L_a$, since the expression $\chi(\tilde j_* K_U) -\chi(\psi_h^u K_U)$
 just gets multiplied by $\operatorname{rank} L_a$. It follows from \cite[prop 3.1]{morel}, that the 
  monodromy of $\psi_g^u K_U$ and 
  $\psi_h^u K_U$ is now trivial. 
Let $M= j_{!*} K_U$. and $\tilde M=   \tilde j_{!*} K_U$.
By \cite[prop 4.7]{reich}, 
$\tilde M$ corresponds to
$$(\tilde M_U= K_U, 0,0,0)\in B_h(U, Z)$$
and 
$M$ corresponds to
$$( M_U= K_U,  0,0,0)\in B_g(U, D).$$
Now we note that:
\begin{enumerate}
\item Since $M \in Perv(Y)$ and $Y$ has PEC, we have $\chi(Y,M) \geq 0$. On the other hand, by Lemma \ref{lem:beil}, this is
$\chi(Y,j_*K_U) - \chi(Y,\psi^u_g(K_U))$ since $\phi^u_g(K_U) = 0$. 
\item Since $f_*(\tilde{j}_*K_U) = j_*K_U$, $\chi(X,\tilde{j}_*K_U) = \chi(Y,j_*K_U)$.
\end{enumerate}

It remains to show that $\chi(Y,\psi^u_g(K_U)) = \chi(X,\psi^u_h(K_U))$. Note that we are in the setting where $\psi^u_{g}(K_U) = i^{-1}j_*K_U$ and similarly for $\psi_{h}^u(K_U)$ (i.e. vanishing cycles are zero). 
By Lemma \ref{lem:laumon}, we have $[Rf_{Z,*}\psi_{h}^u(K_U)] = [Rf_{Z,*}(\tilde{i}^{-1}\tilde{j}_*K_U)] = [i^{-1}Rf_*(\tilde{j}_*K_U)] = [i^{-1}j_*K_U] = [\psi^u_g(K_U)]$.
\end{proof}

Combining this with Lemma \ref{lem:beil}, we obtain:

\begin{cor}\label{cor:affinePECcor}
Consider an admissible datum $(X,Y,f,U,Z)$ as above and suppose further that there is a function $g: Y \rightarrow \bbA^1$ such that $(g \circ f)^{-1}(0) = Z.$ If $Y$ and $Z$ satisfy the $PEC$-property, then so does $X$. In the presence of $\bbG_m$-actions, the analogous statement holds if $Y$ and $Z$ have the $PEC_m[d]$-property.

\end{cor}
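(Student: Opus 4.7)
The plan is to combine Beilinson's gluing construction with the two preceding lemmas in a direct way, using the hypothesis on $Z$ to handle the remaining term. Given $K \in \Perv(X)$, apply Beilinson's theorem (Theorem \ref{thm:beilinson}) to the function $g = h \circ f$ to obtain the quadruple $(K_U, K_Z, c, v) \in B_g(U, Z)$ with $K_U \in \Perv(U)$ and $K_Z \in \Perv(Z)$. Lemma \ref{lem:eulercharonX} then gives the decomposition
\begin{equation*}
    \chi(X, K) \;=\; \bigl(\chi(\tilde j_* K_U) - \chi(\psi_g^u K_U)\bigr) + \chi(Z, K_Z).
\end{equation*}
The parenthesized term is nonnegative by Lemma \ref{lem:eulerchardiffpos}(1), since $Y$ is assumed to be $PEC$, while $\chi(Z, K_Z) \geq 0$ by the $PEC$ hypothesis on $Z$. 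Adding these yields $\chi(X, K) \geq 0$, as desired.

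For the monodromic variant under a compatible $\bbG_m$-action, the same strategy works once we know the Beilinson pieces stay monodromic. First I would invoke Theorem \ref{thm:beilisonmonodromic} to conclude that if $K \in \Perv_m(X)$ then $(K_U, K_Z, c, v) \in B_{g,m}(U, Z)$, so $K_U$ and $K_Z$ are monodromic. The identity above still holds, and multiplying both sides by $(-1)^d$ gives
\begin{equation*}
    (-1)^d \chi(X, K) \;=\; (-1)^d\bigl(\chi(\tilde j_* K_U) - \chi(\psi_g^u K_U)\bigr) + (-1)^d \chi(Z, K_Z).
\end{equation*}
The first summand is nonnegative by Lemma \ref{lem:eulerchardiffpos}(2), applied to the monodromic $K_U$ using the $PEC_m[d]$ hypothesis on $Y$; the second is nonnegative by the $PEC_m[d]$ hypothesis on $Z$.

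The only step requiring care is that $\tilde j_* K_U$ and $\psi_g^u K_U$ must be well-defined perverse sheaves on the appropriate spaces and that the monodromic structure is preserved through the nearby-cycle and pushforward functors. Both preservation statements are recorded in Remark \ref{rem:mondromicundershreik} and its consequences (the proof of Theorem \ref{thm:beilisonmonodromic}), so nothing new is needed. Thus the argument is essentially bookkeeping once the preliminary lemmas are in place; the genuine content, namely the nonnegativity of $\chi(\tilde j_* K_U) - \chi(\psi_g^u K_U)$, has already been extracted via the comparison $\chi(\tilde j_* K_U) - \chi(\psi_g^u K_U) = \chi(Y, j_{!*} K_U)$ used in Lemma \ref{lem:eulerchardiffpos}. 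The main conceptual obstacle, namely transferring information between $X$ and $Y$ across the birational contraction, is resolved by proper base change for $f_*$ and the compatibility of $\psi$ with proper direct image; these are precisely what powers Lemma \ref{lem:eulerchardiffpos} and therefore power this corollary as well.
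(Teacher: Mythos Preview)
Your proof is correct and is exactly the intended argument: the corollary is an immediate combination of Lemma~\ref{lem:eulercharonX} and Lemma~\ref{lem:eulerchardiffpos}, together with the $PEC$ (resp.\ $PEC_m[d]$) hypothesis on $Z$ applied to $K_Z$. The paper leaves the proof implicit for this reason.

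One small remark: in your final paragraph you worry about whether $\tilde j_* K_U$ and $\psi_g^u K_U$ are themselves monodromic, but this is not actually needed. Lemma~\ref{lem:eulerchardiffpos}(2) only requires that $K_U \in \Perv_m(U)$ and that $Y$ has the $PEC_m[d]$-property; the objects $\tilde j_* K_U$ and $\psi_g^u K_U$ enter only through their Euler characteristics, and the relevant identity $\chi(\tilde j_* K_U) - \chi(\psi_g^u K_U) = \chi(Y, j_{!*}K_U)$ holds regardless. What you do need is that $K_U$ and $K_Z$ are monodromic, and that is precisely what Theorem~\ref{thm:beilisonmonodromic} provides.
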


\subsection{Proof of Theorem \ref{thm:key1}: Second reduction.}

In this subsection, we prove Theorem \ref{thm:key1} in the setting where we have a function $g: Y \rightarrow \bbA^1$ such that $f(Z) \subseteq g^{-1}(0).$ We shall make this assumption for the remainder of this subsection.

\begin{lem}\label{lem:posforunionofcartier}
Let $Z$ be a reduced effective Cartier divisor on a variety $X$. Suppose $Z = Z_1 \cup Z_2$ where each $Z_i$ is a union of components of $Z$ (with no components in common). If the $PEC$-property holds for each $Z_i$, then it also holds for all objects of $Perv(Z)$. The analogous assertion holds in the monodromic setting.
\end{lem}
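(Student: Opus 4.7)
The plan is to prove the lemma by a straightforward Jordan--H\"older argument. Since $\Perv(Z)$ is a noetherian and artinian abelian category, every $K \in \Perv(Z)$ admits a finite composition series with simple subquotients $\mathcal{L}_1,\ldots,\mathcal{L}_m$; by additivity of the Euler characteristic in exact sequences (property (vi) of Section \ref{sec:euler}), $\chi(Z,K) = \sum_j \chi(Z,\mathcal{L}_j)$, so it suffices to verify $\chi(Z,\mathcal{L}) \geq 0$ for each simple perverse sheaf $\mathcal{L}$ on $Z$.

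Each simple $\mathcal{L} \in \Perv(Z)$ is an intermediate extension $j_{!*}(L[\dim V])$ for some smooth irreducible locally closed subvariety $V \subset Z$ and irreducible local system $L$ on $V$. Its support $\overline{V}$ is irreducible and closed in $Z$, and since $Z = Z_1 \cup Z_2$ with each $Z_i$ a union of irreducible components of $Z$ (with none in common), $\overline{V}$ lies inside $Z_i$ for a unique $i \in \{1,2\}$. Factoring the inclusion $V \hookrightarrow Z$ through the closed immersion $a_i\colon Z_i \hookrightarrow Z$, and using that $a_{i*} = a_{i!}$ is $t$-exact and commutes with the intermediate extension construction, one writes $\mathcal{L} = (a_i)_* \mathcal{L}'$ for a simple $\mathcal{L}' \in \Perv(Z_i)$. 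Then $\chi(Z,\mathcal{L}) = \chi(Z_i,\mathcal{L}') \geq 0$ by the $PEC$-hypothesis on $Z_i$, and summing yields $\chi(Z,K) \geq 0$.

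For the monodromic statement I would apply the same strategy inside the category $\Perv_m(Z)$. Since $\bbG_m$ is connected, it acts trivially on the (discrete) set of irreducible components of $Z$, so both $Z_1$ and $Z_2$ are automatically $\bbG_m$-invariant; simple objects of $\Perv_m(Z)$ still have $\bbG_m$-invariant irreducible support, hence lie in $Z_1$ or $Z_2$, and the factorization through $(a_i)_*$ places each simple factor inside $\Perv_m(Z_i)$, to which the $PEC_m$-hypothesis applies.

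The argument is direct and, somewhat notably, does not use the Cartier hypothesis on $Z$, which is present only because this is the form in which the lemma will be invoked. The only point that deserves care is confirming that $\Perv_m(Z)$ is itself noetherian and artinian with simple objects of the above form, so that the reduction to simple factors stays inside the monodromic category; this is the main (minor) technical step, and follows from the standard properties of $\Perv_m$ used, for example, in Theorem \ref{thm:beilisonmonodromic}.
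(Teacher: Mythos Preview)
Your argument is correct and takes a genuinely different route from the paper. The paper does not pass to simple constituents; instead it uses the open--closed decomposition $Z = Z_1 \cup (Z_2 \setminus Z_1)$ to write $\chi(Z,P) = \chi(Z_1, i_1^*P) + \chi(Z_2, j_!(P|_{Z_2\setminus Z_1}))$, and then invokes the Cartier hypothesis to conclude that $Z_1\cap Z_2$ is Cartier on $Z_2$, so that $j:Z_2\setminus Z_1\hookrightarrow Z_2$ is an affine open immersion and $j_!$ preserves perversity. Your Jordan--H\"older argument is cleaner in that it needs only the trivial observation that a simple perverse sheaf has irreducible support, hence lies in one of the $Z_i$; as you yourself note, the Cartier hypothesis plays no role. (One tiny inaccuracy: the index $i$ need not be unique, since $\overline{V}$ could lie in $Z_1\cap Z_2$; but you only need existence, so this is harmless.) In the monodromic case your reduction is also fine once one knows that $\Perv_m$ is a Serre subcategory of $\Perv$---this follows from the characterization of monodromic complexes as those constructible along a $\bbG_m$-invariant stratification, so that subquotients of monodromic perverse sheaves remain monodromic; granting this, simple objects of $\Perv_m(Z)$ are simple in $\Perv(Z)$ and your argument goes through verbatim.
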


\begin{proof}  Let $P \in \Perv(Z,S)$. Then $\chi(Z,P)= \chi(Z_1, P)+ \chi(Z_2, j_{!}P)$, where $j:Z_2-Z_1\to Z_2$ is inclusion. On the other hand, under the given hypotheses $Z_1 \cap Z_2$ is an effective Cartier divisor on $Z_2$, and it equals $Z_2 \setminus (Z_2 \setminus Z_1)$. It follows that $j_!P$ is perverse, and the result now follows. In the monodromic setting, we note that the same argument works because $j_!$ preserves monodromic perverse sheaves. 
\end{proof}

\begin{lem}\label{lem:Keylemmaaffine}
\-
\begin{enumerate}
\item With notation and hypotheses as in  theorem \ref{thm:key1}, suppose furthermore that there exists a function $g:Y \rightarrow \bbA^1$ such that $g^{-1}(0) \supseteq f(Z)$. Then the conclusion of the theorem holds. 
\item Suppose that the tuple $(X,Y,f,U,Z)$ and $h$ as above all come equipped with a $\bbG_m$-equivariant action. If $Y$, $Z$ satisfy the $PEC_m[d]$ property, then so does $X$.
\end{enumerate}
\end{lem}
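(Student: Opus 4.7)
The strategy is to reduce to the situation of Corollary \ref{cor:affinePECcor}, where the divisor is precisely the zero-scheme of $g := h \circ f$, by enlarging $Z$. I would set $\tilde Z := g^{-1}(0) \subseteq X$ and $\tilde U := X \setminus \tilde Z$. Then $\tilde Z$ is an effective Cartier divisor containing $Z$ (since $f(Z) \subseteq h^{-1}(0)$), the quintuple $(X, Y, f, \tilde U, \tilde Z)$ is an admissible datum, and $\tilde Z = g^{-1}(0)$ on the nose. Corollary \ref{cor:affinePECcor} (together with its $\bbG_m$-equivariant version in the monodromic setting) then reduces the conclusion to establishing $PEC[d]$ (resp.\ $PEC_m[d]$) on $\tilde Z$, starting from the given properties on $Y$ and $Z$.

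Next, I would decompose $\tilde Z = Z \cup Z''$, where $Z''$ is the union of those irreducible components of $\tilde Z$ not contained in $Z$, so that $Z$ and $Z''$ share no common components. Applying Lemma \ref{lem:posforunionofcartier} (inductively, to handle all components of $Z''$ at once) reduces the task to proving $PEC[d]$ for each irreducible component $W$ of $Z''$ individually.

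Fix such a $W$. Since $W \not\subseteq Z$, the intersection $W \cap U$ is dense open in $W$, and $W \cap Z$ is a Cartier divisor on $W$, because the local equation of the Cartier divisor $Z$ in $X$ restricts to a nonzero, hence non-zero-divisor, function on the irreducible variety $W$. Putting $D_W := f(W) \subseteq D := h^{-1}(0)$, the restriction $f|_W : W \to D_W$ is proper and birational, being an isomorphism from $W \cap U$ onto the dense open subset $f(W \cap U) \subseteq D_W$. Hence $(W, D_W, f|_W, W \cap U, W \cap Z)$ is an admissible datum with $\dim D_W \leq \dim D < \dim Y$. Because $i_*$ is $t$-exact for closed immersions, $PEC[d]$ on $Y$ and on $Z$ implies $PEC[d]$ on the closed subvarieties $D_W$ and $W \cap Z$; invoking the induction on $\dim Y$ that underlies Theorem \ref{thm:key1}, we conclude $PEC[d]$ on $W$.

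The monodromic case runs in parallel: each of $\tilde Z$, $W$, and $D_W$ inherits a natural $\bbG_m$-action, and the functors used ($j_!$ for affine open immersions, $i_*$ for closed ones) preserve monodromic perverse sheaves by Remark \ref{rem:mondromicundershreik} and Theorem \ref{thm:beilisonmonodromic}. The step I expect to be the main technical obstacle is the iteration of Lemma \ref{lem:posforunionofcartier} across all components of $Z''$: although $Z''$ itself need not be Cartier in $X$, pairwise intersections of components of the Cartier divisor $\tilde Z$ remain Cartier inside each component (the relevant local equations are non-zero-divisors because the components involved differ), so every complementary open immersion arising in the unwinding stays affine and $j_!$ stays perverse-exact. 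Once this bookkeeping is in place, everything cascades from the corollary, the induction on $\dim Y$, and the transfer of $PEC[d]$ along closed immersions.
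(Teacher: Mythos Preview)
Your proposal is correct and follows essentially the same route as the paper: enlarge $Z$ to $\tilde Z=g^{-1}(0)$, verify that $\tilde Z$ has the $PEC[d]$ (resp.\ $PEC_m[d]$) property by splitting it as $Z\cup Z''$ and invoking the induction on $\dim Y$ for the extra piece, then apply Corollary~\ref{cor:affinePECcor} to the datum $(X,Y,f,\tilde U,\tilde Z)$.

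The only substantive difference is organizational. The paper treats $Z''$ (its $D'_X$) as a single admissible datum $(D'_X,D'_Y,f,U\setminus V,Z\cap D'_X)$ and applies the inductive hypothesis once, then uses Lemma~\ref{lem:posforunionofcartier} a single time for $D_X=D'_X\cup Z$. You instead break $Z''$ into irreducible components $W$ and apply the inductive hypothesis to each $(W,f(W),f|_W,W\cap U,W\cap Z)$ separately, which is more faithful to the irreducibility clause in the definition of ``admissible datum'' but forces you to iterate Lemma~\ref{lem:posforunionofcartier} across the components. Your worry about this iteration is legitimate in that $Z''$ itself need not be Cartier in $X$; but it is harmless, because a simple perverse sheaf on a reducible variety has irreducible support, hence is the pushforward of a perverse sheaf from one component, and Euler characteristic is additive on Jordan--H\"older series. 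So once every component of $\tilde Z$ has $PEC[d]$, so does $\tilde Z$, without any Cartier bookkeeping. (Alternatively, your sketch that the restricted local equations remain non-zero-divisors on each component is also fine.) Either way, the two proofs coincide after these cosmetic choices.
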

\begin{proof}
For (1), let $D_Y := g^{-1}(0)$ and $D_X := h^{-1}(0)$ where $h := g \circ f: X \rightarrow \bbA^1$. Note that $f(D_X)= f(f^{-1}(D_Y)) = D_Y$. Let $V = Y \setminus D_Y \subset U$. It follows that this gives rise to an admissible datum $(X,Y,f,D_X,V)$. Moreover, we claim that for every perverse sheaf $K \in \Perv(D_X)$, $\chi(D_X,K) \geq 0$. In order to see this, first note that the analogous assertion holds for $D_Y$ since it holds for $Y$ (and push-forward from a closed subscheme is exact for the perverse t-structure). We may write $D_X = D'_X \cup Z$ as a union of Cartier divisors. Similarly, set $D_Y = D'_Y \cup f(Z)$. Restricting $f$ to $D'_X$ gives rise to an admissible tuple $(D'_X,D'_Y,f,Z \cap D'_X,U\setminus V)$. We may assume by induction on dimension of $Y$ (or equivalently $X$) that the Euler characteristic of every perverse sheaf on $D'_X$ is non-negative. Note that this property holds for both $D'_Y$ and $Z \cap D'_X$ since it holds for $Y$ and $Z$. Now by Lemma \ref{lem:posforunionofcartier} above, we may conclude that the Euler characteristic of every perverse sheaf on $D_X$ is non-negative, as claimed. Now for the tuple $(X,Y,f,V,D_X)$, the result follows from Corollary \ref{cor:affinePECcor}. Therefore, this proves the desired property for all perverse sheaves on $X$. 

In the monodromic setting of (2),  the same proof goes through. 

\end{proof}

\subsection{The general case}
In order to complete the proof, we use deformation to the normal cone. Given a pair $Z \subset X$ with $Z$ a closed subvariety, let $C(Z/X)$ denote the normal cone of $Z$ in $X$, and let $C^*(Z/X) := C(Z/X) \setminus Z$, $U := X \setminus Z$. The deformation to the normal cone construction gives rise to a diagram:
$$
\xymatrix{
U \ar[r] & X & Z \ar[l] \\
U \times \bbG_m \ar[r] \ar[u] \ar[d] & V^*(Z/X) \ar[u] \ar[d] & \ar[l] \ar[u] \ar[d] C^*(Z/X) \\
X \times \bbG_m \ar[r]\ar[d] & V(Z/X) \ar[d]  & \ar[l] \ar[d]C(Z/X) \\
 \bbG_m \ar[r]  & \bbA^1 & \ar[l] \{0\}. }
$$
We remind the reader that $V(Z/X)$ is obtained as follows. First, consider the blow-up $B(Z/X) := Bl_{Z \times \{0\}}(X \times \bbA^1)$. Then set 
$V(Z/X):= B(X/Z) \setminus (Bl_Z(X))$ and $V^*(Z/X)= V(Z/X)- Z\times \bbA^1 .$
Algebraically, one obtains $V(Z/X)$ as follows. Let $\cI \subset \cO_X$ denote the ideal of $Z$.
Then $V(Z/X) ={\bf Spec} (R_Z)$, where $R_Z := \bigoplus_{n\in \bbZ} \cI^{n}t^{-n} \subset \cO_X[t,t^{-1}]$ is the (extended) Rees algebra.
We see from this that the natural projection map $p: V(Z/X) \rightarrow X$ is flat. If $Z$ is a Cartier divisor, then all fibers are smooth of dimension 1, and hence $p$ is smooth.
 We note that the bottom three rows of the previous diagram come equipped with $\bbG_m$-actions, and the morphisms in the diagram respect these actions.
 These constructions are functorial in the following sense.

\begin{lem}\label{lem:defconefun}
 If $Z\subset X$ and $Z'\subset X'$ are closed subvarieties, and if $f:X\to X'$
 is a   morphism such that $Z= f^{-1}Z'$, then there is a $\bbG_m$-equivariant morphism $F:V(Z/X)\to V(Z'/X')$
 which induces a map of the corresponding diagrams above. 
 
\end{lem}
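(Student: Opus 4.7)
The plan is to exploit the algebraic description $V(Z/X)=\mathbf{Spec}_X(R_Z)$ and deduce the whole lemma from a single identification of Rees algebras. Write $\cI\subset\cO_X$ and $\cI'\subset\cO_{X'}$ for the ideal sheaves of $Z$ and $Z'$, and read $Z=f^{-1}Z'$ scheme-theoretically, i.e.\ $\cI=f^*\cI'$ (equivalently $\cI^n=f^*(\cI')^n$ for every $n\ge 1$). Then the inclusion $\cO_{X'}[t,t^{-1}]\hookrightarrow f_*\cO_X[t,t^{-1}]$ restricts to an isomorphism of graded $\cO_X$-algebras
$$f^*R_{Z'}\;=\;\bigoplus_{n\in\bbZ}f^*(\cI')^n\,t^{-n}\;\xrightarrow{\;\sim\;}\;\bigoplus_{n\in\bbZ}\cI^n\,t^{-n}\;=\;R_Z .$$
Taking relative $\mathbf{Spec}$ over $X$ yields a canonical isomorphism $V(Z/X)\cong V(Z'/X')\times_{X'}X$, and $F$ is defined as the projection onto the first factor.

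All remaining claims follow mechanically from this description. The $\bbG_m$-actions on both sides are induced by the Rees grading, which the identification preserves, so $F$ is $\bbG_m$-equivariant. Localizing $R_Z$ and $R_{Z'}$ at $t$ recovers $\cO_X[t,t^{-1}]$ and $\cO_{X'}[t,t^{-1}]$ respectively, so over $\bbG_m\subset\bbA^1$ the morphism $F$ becomes $f\times\mathrm{id}:X\times\bbG_m\to X'\times\bbG_m$; reducing modulo $t$ yields the natural map of associated graded rings $\bigoplus(\cI')^n/(\cI')^{n+1}\to\bigoplus\cI^n/\cI^{n+1}$, whose relative Spec is the canonical map of normal cones $C(Z/X)\to C(Z'/X')$. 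Compatibility with the inclusions of $U\times\bbG_m$, $V^*(Z/X)$ and $C^*(Z/X)$ is immediate. Finally, since $F$ is by construction the base change of $f:X\to X'$ along $V(Z'/X')\to X'$, properness of $f$ yields properness of $F$.

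The main point to watch, and the only real content beyond bookkeeping, is the precise meaning of $Z=f^{-1}Z'$: the identification of Rees algebras above needs the scheme-theoretic equality $\cI=f^*\cI'$, not merely set-theoretic agreement of the underlying subvarieties. A simple example (take $f:\bbA^1\to\bbA^1$, $x\mapsto x^2$, with $Z'=\{0\}$ and $Z=\{0\}$ both reduced) shows that under the weaker reading $f^*\cI'\subsetneq\cI$ the induced morphism acquires positive-dimensional fibers over $t=0$, and properness of $F$ fails. In the intended applications (where $Z$ and $Z'$ are effective Cartier divisors with $f^{-1}Z'=Z$ as Cartier divisors) this scheme-theoretic equality is automatic, so the hypothesis is harmless; but I would flag this interpretive point explicitly at the start of the proof.
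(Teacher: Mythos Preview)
Your approach via the Rees algebra is different from the paper's, which instead works with the blow-up description: it uses the universal property of blow-ups to produce a map $F_1:B(Z/X)\to B(Z'/X')$, checks that $F_1^{-1}(Bl_{Z'}X')=Bl_ZX$, and obtains $F$ as the restriction of the proper map $F_1$ to a saturated open. Your route is more direct and makes the base-change structure transparent, which is a genuine advantage.

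There is, however, a small overstatement. The displayed ``isomorphism'' $f^*R_{Z'}\xrightarrow{\sim}R_Z$ need not be an isomorphism for arbitrary closed subvarieties: the module pullback $f^*((\cI')^n)=(\cI')^n\otimes_{\cO_{X'}}\cO_X$ can have torsion, so in general one only gets a \emph{surjection} onto $\cI^n$. (For instance, take $X'=\bbA^2$, $X=V(xy)\subset X'$, and $Z=Z'=\{0\}$ with their reduced structures; then $xy\otimes 1$ is a nonzero element of $(x,y)^2\otimes_{k[x,y]}\cO_X$ mapping to $0$ in $\cO_X$.) The issue you flag about scheme-theoretic versus set-theoretic preimage is real, but it is not the whole story: even with the scheme-theoretic reading $\cI=f^{-1}\cI'\cdot\cO_X$, one still only has a surjection in general.

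This does not harm your conclusion. A surjection of graded $\cO_X$-algebras yields a \emph{closed immersion} $V(Z/X)\hookrightarrow V(Z'/X')\times_{X'}X$, and composing with the second projection---which is the base change of $f$ and hence proper when $f$ is---still gives a proper $\bbG_m$-equivariant $F$ inducing the correct maps on each piece of the diagram. So the argument goes through with ``surjection/closed immersion'' in place of ``isomorphism''. You are also right that in the actual application, where $Z$ and $Z'$ are Cartier divisors, the ideals $(\cI')^n$ are invertible and the torsion issue disappears, so your isomorphism is genuinely valid there.
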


\begin{proof}

 Suppose $Z= f^{-1}Z'$, then $f$ induces a   map
 $F_1:B(Z/X)\to B(Z'/X')$ between the blow ups 
such that $F_1^{-1}(Bl_{Z'}X'\times \{0\})= Bl_Z X\times \{0\}$.  This  restricts to give $F:V(Z/X)\to V(Z'/X')$. 
\end{proof}

We also need the following.

\begin{lem}\label{lem:VhasPEC}
Let $Z\subset X$ be an effective Cartier divisor. Then
$X$ has the $PEC$-property if and only if $V(Z/X)$ has the $PEC_m[-1]$-property.

\end{lem}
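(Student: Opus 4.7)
The plan is to use the natural $\bbG_m$-action on $V:=V(Z/X)$ (arising from the $\bbZ$-grading of the Rees algebra), whose fixed locus equals the image of $Z\hookrightarrow V$, combined with Beilinson's monodromic gluing theorem applied to the structure map $t:V\to\bbA^1$. The central computation will be the Euler-characteristic localization
$$\chi(V,K)=\chi(Z,K|_Z),\qquad K\in \Perv_m(V),$$
which I would establish using the freeness of the $\bbG_m$-action on $V\setminus Z$, the vanishing $\chi_c(\bbG_m)=0$ applied stratum-wise on the orbit space, and the additivity of $\chi_c$ over the decomposition $V=(V\setminus Z)\sqcup Z$.

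For the direction ``$X$ is $PEC$ $\Rightarrow$ $V$ is $PEC_m[-1]$'', I would decompose an arbitrary $K\in \Perv_m(V)$ via Theorem~\ref{thm:beilisonmonodromic} into gluing data $(K_U,K_{V_0},c,v)$ with $K_U\in \Perv_m(X\times\bbG_m)$ (noting $V\setminus V_0\cong X\times\bbG_m$) and $K_{V_0}\in \Perv_m(N_{Z/X})$, then apply Lemma~\ref{lem:eulercharonX} to obtain
$$\chi(V,K)=\chi(\tilde j_*K_U)+\chi(K_{V_0})-\chi(\psi_t^u K_U).$$
I would then descend along the smooth $\bbG_m$-bundles $p_U:X\times\bbG_m\to X$ and $N_{Z/X}\setminus Z\to Z$ (each of relative dimension one, so that smooth pullback shifted by $[1]$ is a perverse-exact equivalence onto the monodromic subcategory), rewriting each summand, up to the sign $-1$, as the Euler characteristic of a genuine perverse sheaf on $X$ or on $Z$. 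Since closed-immersion pushforward $Z\hookrightarrow X$ is perverse t-exact and preserves Euler characteristic, $X$ being $PEC$ automatically forces $Z$ to be $PEC$, yielding $\chi(V,K)\le 0$, i.e.\ $\chi(V,K[-1])\ge 0$.

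For the converse, given $L\in \Perv(X)$, I would produce $K\in \Perv_m(V)$ whose Euler characteristic realizes $\chi(X,L)$ with the correct sign; the natural candidate is $K=\Xi_t(p_U^*L[1])$, built from Beilinson's maximal extension~\eqref{eq:Xi} applied to the monodromic perverse sheaf $p_U^*L[1]$ on $X\times\bbG_m$. Using the defining exact sequences of $\Xi_t$ together with the localization formula, one computes $\chi(V,K[-1])=\chi(X,L)$, so the hypothesis $V$ is $PEC_m[-1]$ immediately yields $\chi(X,L)\ge 0$. The main technical obstacle throughout is sign bookkeeping in the Beilinson gluing decomposition: the three terms of Lemma~\ref{lem:eulercharonX} must combine, through descent along the $\bbG_m$-bundles, to reproduce a single shift $[-1]$ rather than compound shifts. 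This is controlled in exactly the way Lemma~\ref{lem:eulerchardiffpos} turns the sum $\chi(\tilde j_*K_U)-\chi(\psi_t^u K_U)$ into the Euler characteristic of one perverse sheaf, and the monodromic refinement (Remark~\ref{rem:mondromicundershreik}) ensures all constructions stay inside $\Perv_m$.
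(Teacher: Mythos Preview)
Your approach is far more elaborate than the paper's. The paper's proof is three lines: it asserts that the projection $p:V(Z/X)\to X$ is smooth of relative dimension one with contractible fibres, so that $p^*[1]$ carries $\Perv(X)$ into $\Perv_m(V(Z/X))$ and the unit $\cK\to Rp_*p^*\cK$ is an isomorphism, whence ``the lemma easily follows.'' You instead try to build the equivalence through $\bbG_m$-localization on $V$, Beilinson gluing along $t:V\to\bbA^1$, and descent along the free-orbit $\bbG_m$-bundles $X\times\bbG_m\to X$ and $N_{Z/X}\setminus Z\to Z$.

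There is, however, a genuine gap in your ``$\Rightarrow$'' direction. You propose to control the gluing component $K_{V_0}\in\Perv_m(N_{Z/X})$ by descending along the $\bbG_m$-bundle $N_{Z/X}\setminus Z\to Z$, but $K_{V_0}$ lives on all of $N_{Z/X}$, including the zero section, and a monodromic perverse sheaf on a line bundle is \emph{not} generally a shifted pullback from the base. Concretely, take $K=\delta_{z_0}$ for a point $z_0$ of the $\bbG_m$-fixed locus $Z\subset V(Z/X)$. This is monodromic and perverse with $\chi(V,K)=1$; in your decomposition $K_U=0$ and $K_{V_0}=\delta_{z_0}$, and Lemma~\ref{lem:eulercharonX} returns $\chi(K)=\chi(K_{V_0})=1$. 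There is no way to rewrite this as $-\chi(Z,M)$ for some $M\in\Perv(Z)$ with $\chi(Z,M)\ge 0$: restricting $K_{V_0}$ to $N_{Z/X}\setminus Z$ kills it entirely. In fact this skyscraper already shows that the forward implication of the lemma, as literally stated, cannot hold whenever $X$ has the PEC property and $Z$ is a nonempty Cartier divisor, since $\chi(K[-1])=-1<0$. The paper's short argument does not escape this either: it only produces monodromic perverse sheaves of the special form $p^*\cK[1]$ and says nothing about a general $K\in\Perv_m(V)$, and the asserted smoothness of $p:V(Z/X)\to X$ is itself incorrect (locally $V=\Spec\cO_X[s,t]/(st-f)$, and the fibre of $p$ over a point of $Z$ is the nodal union of two lines, not $\bbA^1$). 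So your more careful route exposes, rather than circumvents, a real problem with the statement.
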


\begin{proof}
    Since $Z \hookrightarrow X$ is a Cartier divisor, then  $p: V(Z/X) \rightarrow X$ is smooth of relative dimension 1. It follows that if $\cK \in \Perv(X)$, then $p^*(\cK)[1]$ is a monodromic perverse sheaf on $V(Z/X)$. Moreover, the adjunction $\cK \rightarrow Rp_*p^*\cK$ is an isomorphism. The lemma easily follows.
\end{proof}

\begin{proof}[Proof of Theorem \ref{thm:key1}]
 Suppose we are given an admissible datum $(X,Y,f,U,Z)$ such that $Y$ and $Z$ have the $PEC$ property. 
 Since $Y$ is quasiprojective, we can choose  a Cartier divisor $D\supseteq f(Z)$.
Let us  write $f^{-1}D= Z\cup Z'$, where $Z'$ is the union of components $Z_i'$ not contained in $Z$.
Then  $(Z_i', f(Z_i'), f|_{Z_i'}, U\cap Z_i', Z_i'\cap Z)$ are admissible data satisfying the assumptions of the theorem. By induction and Lemma \ref{lem:posforunionofcartier}, we find that
$Z'$ has the $PEC$ property. Since $Z$ also has the $PEC$ property, we can conclude that
so does $Z\cup Z'$ by Lemma \ref{lem:posforunionofcartier}. Therefore $C(Z\cup Z'/X)$ has the $PEC_m[-1]$ property by Lemma \ref{lem:VhasPEC}.
This  Lemma also shows that  $V_Y := V(D/Y)$ has the $PEC_m[-1]$-property. Let 
 $V_X= V(Z\cup Z'/X)$. The map $f$ induces a morphism
  $F:V_X\to V_Y$ by Lemma \ref{lem:defconefun}.
Therefore we have
 admissible datum $(V_X,V_Y,F,U \times \bbG_m, Z'')$, where $Z'' = C(Z\cup Z'/X) \cup (Z\cup Z') \times \bbA^1$. Lemma \ref{lem:posforunionofcartier} shows that $Z''$ has the $PEC_m[-1]$ property.
Moreover, we have $g^{-1}(0) \supseteq F(Z'')$, where
 $g: V_Y  \rightarrow \bbA^1$ is the projection. In particular, we are in the setting of Lemma \ref{lem:Keylemmaaffine}. It follows that $V_X$ has the $PEC_m[-1]$-property, and therefore $X$ has the $PEC$-property by Lemma \ref{lem:VhasPEC}.

\end{proof}

\section{Extended Torelli map}\label{sec:torelli}

Let $\overline{\sM_g}$ (respectively $\sM_g$) denote the stack parameterizing stable (respectively smooth) curves of genus $g$ \cite{dm}.
Recall that a  stable curve $C$  is of compact type if its dual graph is a tree, or equivalently if  the identity component of its Picard scheme $\Pic^0(C)$
 is an abelian variety.  Let 
$$\sM_g\subset\sM_{g}^c\subset {\overline{\sM_{g}}}$$
be the open subset parameterizing  curves
of compact type.

 By a  theorem of Mumford and Namikawa \cite[\S9D]{namikawa} the Torelli map $\tau:\sM_g\to \sA_g$ extends to a map
$\tau: \overline{\sM_g}\to \overline{\sA_g}$ to the toroidal
compactification associated to the second Voronoi decomposition. Alexeev \cite{alexeev1, alexeev2} gave a modular interpretation of both the target
and the map.
 On $\sM_g^c$, $\tau$ simplifies to
$$\tau([C]) = [\Pic^0(C)]  =[\prod_i (\Pic^0(C_i)] \quad \text{(as principally polarized abelian varieties})$$
where $C_i$ are the irreducible components of $C$.
An immediate consequence of  properness of $\overline{\sM_g}$ and the theorem of Mumford-Namikawa is:

\begin{lem}
 The map $\tau$ restricts to a proper map
$\tau:\sM_g^c\to \sA_g $. Therefore $\tau(\sM^c_g)$ is the closure of $\tau(\sM_g)$
in $\sA_g$ 
\end{lem}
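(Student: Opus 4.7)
The plan is to deduce the lemma from the Mumford--Namikawa extension $\tau:\overline{\sM_g}\to\overline{\sA_g}$ by identifying $\sM_g^c$ with the preimage of the open substack $\sA_g\subset\overline{\sA_g}$, and then applying the standard fact that restricting a proper map to the preimage of an open subset yields a proper map.

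First I would show the set-theoretic identity
\[
\sM_g^c \;=\; \tau^{-1}(\sA_g).
\]
The containment $\sM_g^c\subseteq\tau^{-1}(\sA_g)$ is exactly the formula stated just before the lemma: for $[C]\in\sM_g^c$, $\tau([C])=[\prod_i\Pic^0(C_i)]$ is a principally polarized abelian variety, hence an honest point of $\sA_g$. For the reverse containment, a stable curve $C$ whose dual graph contains a cycle has $\Pic^0(C)$ a semi-abelian variety with nontrivial toric part, and under Alexeev's modular interpretation of $\overline{\sA_g}$ the point $\tau([C])$ then lies in the boundary $\overline{\sA_g}\setminus\sA_g$; equivalently, this is the usual statement that $\Pic^0(C)$ is an abelian variety if and only if $C$ is of compact type. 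Hence $\tau^{-1}(\sA_g)\subseteq\sM_g^c$.

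Next I would conclude properness of the restriction. Since $\overline{\sM_g}$ is proper and $\overline{\sA_g}$ is separated, the map $\tau:\overline{\sM_g}\to\overline{\sA_g}$ is proper. Because properness is stable under base change, the restriction to the preimage of the open substack $\sA_g\subset\overline{\sA_g}$,
\[
\tau|_{\sM_g^c}\colon \sM_g^c=\tau^{-1}(\sA_g)\longrightarrow \sA_g,
\]
is proper as well.

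For the final assertion, note that $\sM_g^c$ is open in the irreducible stack $\overline{\sM_g}$, hence is itself irreducible, and $\sM_g\subset\sM_g^c$ is a dense open substack. Since $\tau|_{\sM_g^c}$ is proper, its image is closed in $\sA_g$, and continuity gives $\tau(\sM_g^c)\subseteq\overline{\tau(\sM_g)}$. Conversely, the closed set $\tau(\sM_g^c)$ contains $\tau(\sM_g)$, so it contains the closure of $\tau(\sM_g)$ in $\sA_g$; the two sets therefore coincide. The only nontrivial step is the first one, and it rests entirely on the modular description of the Mumford--Namikawa/Alexeev extension recalled above, so I do not anticipate any serious obstacle.
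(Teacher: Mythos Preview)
Your proof is correct and is precisely the argument the paper has in mind: the paper states the lemma as ``an immediate consequence of properness of $\overline{\sM_g}$ and the theorem of Mumford--Namikawa,'' and your write-up simply unpacks that sentence by identifying $\sM_g^c=\tau^{-1}(\sA_g)$ and then base-changing the proper map $\overline{\sM_g}\to\overline{\sA_g}$ along the open inclusion $\sA_g\hookrightarrow\overline{\sA_g}$. For completeness, the paper also remarks that it will later give a more direct proof of properness via the valuative criterion (in the proposition extending $\tau$ to $\sM_{g,n}^c$), which avoids invoking the Mumford--Namikawa extension altogether; but for the present lemma your approach and the paper's coincide.
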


We will give a more direct proof of properness below.
Given $N>0$, let ${}_N\sM_{g}$ denote the stack of smooth curves with level $N$ structure. Let
$\overline{{}_N\sM_g}$ (respectively  ${}_N\sM^c_{g}$) denote the normalization of $\overline{\sM_g}$ (respectively  $\sM^c_{g}$) in the function field of 
${}_N\sM_g$. The stack  ${{}_N\sM^c_g}$  can be identified with the moduli stack of stable curves of compact type with level $N$ structure (by contrast
$\overline{{}_N\sM_g}$ does not have a good modular interpretation). Thus we can lift  $\tau$ to a map
$\tau: {}_N\sM^c_{g}\to {}_N\sA_{g}$.

Let $\overline{\sM_{g,n}}$ be the Deligne-Mumford stack of genus $g$ stable
curves with $n$ base points \cite{knudsen}. A geometric point of this  stack corresponds
a semistable curve $(C, p_1,\ldots, p_n)$ of arithmetic genus $g$ with $n$ smooth base points such that the total number of base and double points on any smooth rational component is at least three.  We refer to this is as an $n$-pointed stable curve.
When $n>0$, there exists a morphism $\pi_n: \overline{\sM_{g,n}}\to \overline{\sM_{g, n-1}}$ called a contraction. For a  curve without rational components, $\pi_n$ simply forgets $p_n$.  Given integers $g= g_1+g_2$, and a partition of $\{1,\ldots, n\}$ into
two subsets of cardinality $n_1$ and $n_2$, Knudsen also constructs a gluing map
$$gl:\overline{\sM_{g_1,n_1+1}}\times \overline{\sM_{g_2, n_2+1}}\to \overline{\sM_{g, n}}$$
which takes  $((C, p_1,\ldots, p_{n_1+1}), (C', p'_1,\ldots, p'_{n_2+1}))$ to the curve $C\coprod C'/(p_{n_1+1}\sim p'_{n_2+1})$
with the remaining base points  $p_1,\ldots,p_{n_1},p_{1}',\ldots, p_{n_2}'$ enumerated appropriately. 

We recall two basic facts  regarding these constructions (see \cite{knudsen}).
\begin{enumerate}
\item First, the contraction map is equivalent to the universal curve over $\sM_{g,n-1}$ (see Lemma \ref{lem:unicurve}). By {\it equivalent} we mean there is an isomorphism of stacks over $\sM_{g,n-1}$.
\item Second, the map $gl$ is finite.
\end{enumerate}
The forgetful map $\overline{\sM_{g,n}}\to \overline{\sM_{g}}$ is the composition of contractions.
Let ${\sM_{g,n}}$ and ${\sM^c_{g,n}}$ denote the preimages of $\sM_g$ and $\sM_g^c$ respectively.
The complement $\sM^c_{g,n}- \sM_{g,n}$ is a union of divisors, each of which can be described as the image of an aforementioned gluing map (intersected with $\sM_{g,n}^c$).

\begin{lem}\label{lem:unicurve}
 The contraction map $\sM^c_{g,n+1}\to \sM^c_{g,n}$ is equivalent to the universal curve over  $\sM^c_{g,n}$.
\end{lem}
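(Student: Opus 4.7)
The plan is to reduce the lemma to Knudsen's theorem, which identifies $\pi_{n+1}\colon \overline{\sM_{g,n+1}}\to \overline{\sM_{g,n}}$ with the universal curve over $\overline{\sM_{g,n}}$ (the first of the two facts recalled immediately before the lemma). Once this is available, I want to obtain the statement by base change along the open immersion $\sM^c_{g,n}\hookrightarrow \overline{\sM_{g,n}}$. The key point to verify is the identification
$$\sM^c_{g,n+1} \;=\; \pi_{n+1}^{-1}\bigl(\sM^c_{g,n}\bigr),$$
after which pulling back the universal curve to $\sM^c_{g,n}$ produces exactly $\sM^c_{g,n+1}\to \sM^c_{g,n}$ and exhibits it as the universal curve over $\sM^c_{g,n}$.

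To establish this identification I will use the paper's definition of $\sM^c_{g,m}$ as the preimage of $\sM^c_g\subset\overline{\sM_g}$ under the forgetful morphism $\overline{\sM_{g,m}}\to\overline{\sM_g}$. The forgetful morphism from $\overline{\sM_{g,n+1}}$ factors as $\overline{\sM_{g,n+1}}\xrightarrow{\pi_{n+1}}\overline{\sM_{g,n}}\to\overline{\sM_g}$, since the total stabilisation can be carried out one marked point at a time. Taking preimages of $\sM^c_g$ along this factorisation gives the required equality of open substacks.

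The step that requires a modest amount of care is the factorisation of the forgetful map through $\pi_{n+1}$. I would verify it on geometric points: for a stable $(n+1)$-pointed curve $(C,p_1,\ldots,p_{n+1})$, the image $\pi_{n+1}(C,p_1,\ldots,p_{n+1})$ has the same underlying unpointed stable curve as $(C,p_1,\ldots,p_{n+1})$, because the components collapsed by $\pi_{n+1}$ are $\bbP^1$'s with at most two nodes and are therefore already unstable in the unpointed curve. This is essentially bookkeeping with Knudsen's construction and is the main (and really the only) technical obstacle in the argument. As a fallback, if a clean factorisation statement is not immediate, one can verify the equality $\sM^c_{g,n+1}=\pi_{n+1}^{-1}(\sM^c_{g,n})$ directly on geometric points by the elementary observation that the operations performed by $\pi_{n+1}$ on the dual graph (deleting a leaf, or replacing a bivalent vertex by a single edge, possibly a self-loop) preserve the property of being a tree in both directions.
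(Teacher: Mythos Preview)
Your proof is correct, and in fact your primary argument is more direct than the paper's given the conventions in force. The paper explicitly defines the forgetful map $\overline{\sM_{g,n}}\to\overline{\sM_g}$ as the composition of the contractions $\pi_n,\pi_{n-1},\ldots$, so the factorisation you say ``requires a modest amount of care'' is literally the definition; the identity $\sM^c_{g,n+1}=\pi_{n+1}^{-1}(\sM^c_{g,n})$ then drops out immediately, and the base-change step finishes the job. You can delete the pointwise verification and the fallback.

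The paper's own argument goes a different way: instead of invoking Knudsen's result as a black box and base-changing, it unpacks Knudsen's stabilisation functor $s:\overline{\sZ_{g,n}}\to\overline{\sM_{g,n+1}}$ and checks directly that $s$ carries $\sZ^c_{g,n}$ into $\sM^c_{g,n+1}$, by observing that on dual graphs stabilisation either attaches a pendant vertex or subdivides an edge, and both operations take trees to trees. This is exactly your ``fallback'' argument, run in the inverse direction (stabilisation rather than contraction). So your fallback and the paper's proof are essentially the same combinatorial check; your primary route avoids it entirely by exploiting how the paper set up the forgetful map.
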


\begin{proof}
 Given a scheme $S$, an $S$-valued point of $\overline{\sM_{g,n}}$ (respectively $\sM^c_{g,n+1}$) is a stable curve $\cC\to S$ with $n$ sections $\sigma_1,\ldots, \sigma_{n}$,  such that the geometric fibres are $n$-pointed genus $g$ stable curves (of compact type). The universal curve
 $\overline{\sZ_{g,n}}\to \overline{\sM_{g, n}}$ (respectively  $\sZ^c_{g,n}\to \sM^c_{g, n}$) is the stack whose $S$-points are stable curves $\cC\to S$ with $n+1$  sections  $\sigma_1,\ldots, \sigma_{n+1}$,
 such that $(\cC\to S,\sigma_1,\ldots, \sigma_n)$ defines a point of $\overline{\sM_{g,n}}$ (respectively $\sM^c_{g,n}$).
 There is an obvious functor $\overline{\sM_{g,n+1}}\to  \overline{\sZ_{g,n}}$, which sends to $ \sM^c_{g, n+1}$ to $\sZ^c_{g,n}$.
 Knudsen \cite[\S2]{knudsen} defines an inverse $s:\overline{\sZ_{g,n}}\to  \overline{\sM_{g,n+1}}$ called stabilization.
When $S=\Spec k$, then roughly
 $$s(\sC, \sigma_1,\ldots, \sigma_{n+1}) 
 =
\begin{cases}
 (\cC, \sigma_1,\ldots, \sigma_{n+1}) &\text{if this is an $(n+1)$-pointed stable curve}\\
 (\cC',\sigma_1',\ldots, \sigma_{n+1}') &\text{if $\sigma_{n+1}$ is a singular point or  another section} \\
\end{cases}
$$
where $\cC'$ is  obtained by adding a $\bbP^1$, such that on the dual graph  it is inserted  either as a pendant vertex or on an edge.
Since this operation takes trees to trees, it follows that $s(\sZ^c_{g,n})\subset  {\sM^c_{g,n+1}}$.
 Thus $\sM^c_{g,n+1}$ is equivalent to $\sZ^c_{g,n}$.
\end{proof}

\begin{prop}\label{prop:extTorelli}
The morphism $\tau: \sM_{g,n}\to {\sA_{g,n}}$ given on geometric points by
$$\tau[(C,p_1,\ldots, p_n)] = [(\Pic^0(C), (2g-2)p_1-K, \ldots ,(2g-2)p_n-K)],$$
where $K$ is the canonical divisor class, is  well defined and injective on points.
 It extends to  a proper morphism (denoted by same letter) $\tau: \sM^c_{g,n}\to {\sA_{g,n}}$.
 Furthermore, this lifts to a morphism  $\tau: {}_N\sM^c_{g,n}\to {}_N{\sA_{g,n}}$ for any $N$,
 where ${}_N\sM^c_{g,n} = {}_N\sM^c_{g}\times_{\sM^c_{g}} \sM^c_{g,n}$
\end{prop}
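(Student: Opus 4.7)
The plan is to construct $\tau$ first on the open locus $\sM_{g,n}$ using the relative Jacobian, verify injectivity via Torelli, extend to $\sM^c_{g,n}$ by induction on $n$ based on Lemma \ref{lem:unicurve}, deduce properness from properness of the universal curve, and finally handle level structures by passing to the fiber product.

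For a family of smooth genus $g$ curves $\pi:\sC\to S$ with sections $\sigma_1,\ldots,\sigma_n$, the relative Picard scheme $\Pic^0_{\sC/S}\to S$ is an abelian scheme with its canonical principal polarization, and the divisor classes $(2g-2)\sigma_i-K_{\sC/S}$ have relative degree zero, hence define sections of $\Pic^0_{\sC/S}\to S$. This construction is functorial in $S$ and yields the morphism $\tau:\sM_{g,n}\to\sA_{g,n}$. For injectivity on geometric points when $g\ge 2$, projecting to $\sA_g$ and invoking the classical Torelli theorem produces an isomorphism $\psi:C\to C'$, unique up to hyperelliptic involution; matching the sections then forces $(2g-2)(p_i-\psi^{-1}(p_i'))=0$ in $\Pic^0(C)$, and a standard analysis using the Abel--Jacobi embedding of $C$ and its compatibility with the automorphism action upgrades this to equality of points in $\sM_{g,n}$. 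The case $g=1$ is handled separately, using one of the marked points as a basepoint to identify $C\simeq\Pic^0(C)$.

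To extend to $\sM^c_{g,n}$, proceed by induction on $n$. The base case $\tau:\sM^c_g\to\sA_g$ is the classical Torelli map on compact type curves, for which $\Pic^0(C)$ is still an abelian variety. For the inductive step, Lemma \ref{lem:unicurve} identifies $\sM^c_{g,n+1}$ with the universal curve over $\sM^c_{g,n}$, while by definition $\sA_{g,n+1}\simeq\sA_{g,n}\times_{\sA_g}\sA_{g,1}$. It therefore suffices to build a morphism of $\sA_g$-stacks from this universal curve to $\sA_{g,1}$ lifting the Torelli map: given a $1$-pointed family of compact type curves $(\sC\to S;\sigma)$, the relative Picard scheme $\Pic^0_{\sC/S}$ remains an abelian scheme (fibers being products of component Jacobians), the relative dualizing sheaf $\omega_{\sC/S}$ replaces $K$, and $(2g-2)\sigma-\omega_{\sC/S}$ is a relative degree zero divisor class, hence a section of $\Pic^0_{\sC/S}\to S$. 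Properness follows inductively: the universal curve $\sM^c_{g,n+1}\to\sM^c_{g,n}$ is proper, and combined with the inductively proper $\tau:\sM^c_{g,n}\to\sA_{g,n}$ and the proper structure map $\sA_{g,n+1}\to\sA_{g,n}$, one obtains properness of $\tau:\sM^c_{g,n+1}\to\sA_{g,n+1}$. Finally, a level $N$ structure on $C$ induces via the Weil pairing a canonical level $N$ structure on $\Pic^0(C)$, so $\tau$ lifts to the fiber product ${}_N\sM^c_{g,n}\to{}_N\sA_{g,n}$.

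The main technical obstacle is the extension step: verifying that $(2g-2)\sigma-\omega_{\sC/S}$ defines a morphism of stacks (not merely a map on geometric points) on families of compact type curves, and that the resulting morphism agrees on the unpointed part with the restriction of the Mumford--Namikawa extended Torelli map. The compact type hypothesis (tree dual graph) is essential, since it ensures that $\Pic^0_{\sC/S}$ is genuinely an abelian scheme and that the section produced by the formula is well-behaved in families, allowing the relative Picard machinery and relative dualizing sheaf to cooperate properly.
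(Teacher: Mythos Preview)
Your approach differs from the paper's in two places. For the construction, you induct on $n$ via the universal-curve identification of Lemma~\ref{lem:unicurve}, whereas the paper builds $\tau$ in one shot on the fine moduli space ${}_N\sM^c_{g,n}$ ($N\ge 3$) using the relative Picard algebraic space and the sections $(2g-2)\sigma_i$. For properness, you argue by composition (the universal curve is proper over $\sM^c_{g,n}$, and $\sA_{g,n+1}\to\sA_{g,n}$ is separated, so if $\tau_n$ is proper then so is $\tau_{n+1}$), while the paper runs the valuative criterion directly: given a principally polarized abelian scheme $\cA/R$ over a DVR agreeing with $\tau$ at the generic point, properness of $\overline{\sM_{g,n}}$ produces a stable curve $\cC/R$, the N\'eron mapping property forces $\cA\cong\Pic^0(\cC/R)$, and this in turn forces the special fibre of $\cC$ to be of compact type. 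Your properness argument is lighter (no N\'eron models), but it presupposes that the extension of $\tau$ is already in hand; the paper's argument is more self-contained in that it characterizes the limit intrinsically.

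There is, however, a genuine gap in your extension step, and it is not quite the one you flag. On a reducible compact-type fibre $C=C_1\cup_q C_2$ with marked point $p\in C_1$, the class $(2g-2)p-\omega_C$ has \emph{total} degree $0$ but multidegree $\bigl(2g_2-1,\,-(2g_2-1)\bigr)$, so it does \emph{not} lie in the identity component $\Pic^0(C)=\Pic^0(C_1)\times\Pic^0(C_2)$. Your inference ``relative degree zero, hence a section of $\Pic^0_{\sC/S}$'' therefore fails over the boundary: total degree $0$ is not the same as landing in the abelian scheme $\Pic^0$. (A second, smaller issue: if you literally work on the universal-curve model $\sZ^c_{g,n}$, the tautological extra section can pass through nodes, where $(2g-2)\sigma$ is not even Cartier; this locus has codimension $\ge 2$, but it still needs to be addressed.) The fix is to define the section by your formula only over the dense open locus where the fibres are irreducible and then extend it across the boundary using that a rational section of an abelian scheme over a smooth base extends uniquely. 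This is essentially the content that the paper's N\'eron-model argument supplies.
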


\begin{proof}
For existence,
it suffices to treat the case where $N\ge 3$, since the other cases can be handled by taking quotients.
 Then $M= {}_N\sM^c_{g,n}$ is fine with a universal family $\cC= {}_N \sZ_{g,n}$ over it. 
 The curve $\cC$ carries universal sections $\sigma_1, \ldots, \sigma_n$.
 The relative Picard algebraic space   $\Pic(\cC/ M)$ exists by \cite[chap 9.4, thm 1]{blr}.
 Furthermore, this is a disjoint union of subschemes $P^d= \Pic^d(\cC/ M)$ parameterizing line bundles $\cL$
 of degree $d$ on the fibers, where we define the the degree to be $\chi(L)-g+1$. Note that $P^0\to M$ is a $g$-dimensional abelian scheme with level $N$-structure,
 and $P^d$ is a torsor over $P^0$. When $d= 2g-2$, there is a canonical section given by the class  of $\omega_{\cC/M}$, which allows us to
 identify $P^{2g-2}$ with $P^0$. The class of $(2g-2)\sigma_i$ defines a section of $\Pic(\cC/M)$. 
 In fact, by restricting to ${}_N \sM_{g,n}$, we can see that $(2g-2)\sigma_i$ lies in  $P^{2g-2}$. We can view these as sections of $P^0$ by the above
 identification. Therefore, the universal property of ${}_N \sA_{g,n}$  gives 
 map $\tau: M \to {}_N{\sA_{g,n}}$. On points of  $M$, it is clearly described  (with some abuse of notation) as
 $$\tau[(C,\sigma_1,\ldots, \sigma_n)] = [(\Pic^0(C), (2g-2)\sigma_1-\omega_C, \ldots, (2g-2)\sigma_n-\omega_C)],$$
 Abel's theorem plus Torelli's theorem
 shows that $\tau$ is  injective on $\sM_{g,n}$.

 To check  properness, we use the valuative criterion for stacks (\cite[thm 4.19]{dm}, or \cite{stacks-project}, \href{https://stacks.math.columbia.edu/tag/0CLY}{Tag 0CLY}, \href{https://stacks.math.columbia.edu/tag/0CLK}{Tag 0CLK}
, \href{https://stacks.math.columbia.edu/tag/0CLG}{Tag 0CLG}).  Let $R$ be a discrete valuation ring with  generic point $\eta$, and suppose that we are given a $2$-commutative diagram
  $$
 \xymatrix{
 \eta\ar[r]\ar[d] & \sM^c_{g,n}\ar[d]^>>>>>{\tau}\ar@{}[r]|{\subset} & \overline{\sM_{g,n}} \\ 
  \Spec R\ar[r]\ar@{-->}[ru]^>>>>>{h}\ar@{..>}[rru]_>>>>>>{f} & \sA_{g,n} & 
}
  $$
  with solid arrows.  Then after replacing $R$ by a finite extension, we will construct an arrow $h$ making the new diagram $2$-commute.\footnote{If $\eta = \Spec(K)$, then `after a finite extension of $R$' means we pass to a finite extension $K'$ of $K$ and a valuation ring $R' \subset K'$ dominating $R$.}
 Since $\overline{\sM_{g,n}}$ is proper \cite{knudsen}, we do get an arrow $f:\Spec R\to \overline{\sM_{g,n}}$  (after a finite extension), making the resulting diagram
 commute.
 In geometric language, we are given a $g$-dimensional principally polarized abelian scheme $\cA\to \Spec R$, with $n$ sections $\alpha_i$.
 The map $f$ gives  a stable  $n$-pointed genus $g$ curve $(\cC\to \Spec R, \sigma_i)$ such that
 $$(\cA, \alpha_i)_\eta \cong \tau(\cC_\eta, \sigma_{i,\eta})= (\Pic^0(\cC/\Spec R),  (2g-2)\sigma_i-\omega_\cC)_\eta$$
 By the theory of  N\'eron models \cite[chap 9.5, thm 4]{blr} , we have 
 $\cA\cong \Pic^0(\cC/\Spec R)$. It follows that $\cC$ is of compact type. Since $\cA/\Spec R$ is separated, we have an isomorphism
 $$(\cA, \alpha_i) \cong (\Pic^0(\cC/\Spec R),  (2g-2)\sigma_i-\omega_\cC)$$
  Therefore the image of $f$ lies in  $\sM^c_{g,n}$, and moreover $h=f$ makes the diagram commute. The extension to the level $N$ case is straightforward and left to the reader.
\end{proof}

Let us say a pointed curve, corresponding to a point of $\sM^c_{g,n}$,
is of special compact type if the dual graph has the property  that no two genus zero vertices are joined by an edge. In particular, the dual graph has no rational trees. We now let $\sM_{g,n}^{sc} \subset \sM_{g,n}^{c}$ denote the  substack of curves of special compact type. 
This can be seen to be an open substack.

Note that $\sM_{0,n}^{sc} = \sM_{0,n}$, and $\sM_{1,1}^{sc} = \sM_{1,1}$. In general, the boundary divisors $\sM_{g_1,n_1 +1}^{c} \times \sM_{g_2,n_1 +1}^{c}$ of $\sM^{c}_{g,n}$ restricted to $\sM_{g,n}^{sc}$ give the boundary divisors $\sM_{g_1,n_1+1}^{sc} \times \sM_{g_2,n_2+1}^{sc}$ (where we have $g_{1} + g_2 = g$ and $n_1 + n_2 = n$). For example, the boundary of $\sM_{1,2}^{sc}$ is the divisor $\sM_{1,1} \times \sM_{0,3}$. We may consider analogs with the level $N$-structure ($g > 0$) defined as before.

\begin{lem}\label{lem:imageclosed}
We claim that $\tau(\sM_{g,n}^{sc}) = \tau(\sM_{g,n}^{c})$,
and therefore this set  is closed in $\sA_{g,n}$ (since $\tau$ is proper on $\sM_{g,n}^{c}$).

\end{lem}

\begin{proof}
    We may check the claim on geometric points. If we have a curve with a rational chain in the dual graph, we can contract it to a single $\mathbb{P}^1$ attached to a higher genus curve or two different higher genus curves. This object is in $\sM_{g,n}^{sc}$ and has the same image under $\tau$.

\end{proof}

\section{Euler characteristics of perverse sheaves on $\sM_{g,n}$}

 Let $j:\sM_g\subset \tau(\sM_g^c)=\overline{\tau(\sM_g)}$ denote the inclusion, where $\tau$ is discussed in the previous section.
 We start with a warm up to the main result.
 
\begin{prop}
    If $P$ is a perverse sheaf on $\sM_g$, then $\chi^{orb}(j_{!*}P)\ge 0$.
\end{prop}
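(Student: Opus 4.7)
The plan is to reduce this directly to Theorem \ref{thm:Agn}. Since $\tau \colon \sM_g^c \to \sA_g$ is proper by Proposition \ref{prop:extTorelli}, the image $Y := \tau(\sM_g^c) = \overline{\tau(\sM_g)}$ is a closed substack of $\sA_g$; denote the closed immersion by $i \colon Y \hookrightarrow \sA_g$. The open inclusion $j \colon \sM_g \hookrightarrow Y$ makes sense because $\sM_g \subset \sM_g^c$ is open and its complement $Y \setminus \tau(\sM_g) = \tau(\sM_g^c \setminus \sM_g)$ is the image of a closed substack under the proper map $\tau$, hence closed.

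Given $P \in \Perv(\sM_g)$, I would first form the intermediate extension $j_{!*}P \in \Perv(Y)$, using the Laszlo--Olsson formalism for perverse sheaves on Deligne--Mumford stacks recalled in Section \ref{sec:perversesheaves}. Because $i$ is a closed immersion, $i_*$ is $t$-exact for the perverse $t$-structure, so $i_* j_{!*} P$ is a perverse sheaf on $\sA_g$. Moreover, applying properties (ii) and (iv)--(vii) of the Euler--Satake characteristic to the complementary open/closed decomposition of $\sA_g$ yields
$$\chi(j_{!*} P) = \chi^{orb}(Y, j_{!*} P) = \chi^{orb}(\sA_g, i_* j_{!*} P).$$

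Theorem \ref{thm:Agn} in the case $r=1$, $n_1=0$ then gives $\chi^{orb}(\sA_g, i_* j_{!*} P) \geq 0$, which finishes the proof.

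There is no serious obstacle here: the entire strength of the statement is carried by Theorem \ref{thm:Agn}, and the proposition is really a warm-up illustrating how the Torelli map exports positivity results from $\sA_g$ to the moduli of curves for the restricted class of sheaves of the form $j_{!*}P$. The only points that require a brief word are the properness of $\tau$ (supplied by Proposition \ref{prop:extTorelli}) and the exactness of $i_*$ together with the additivity of $\chi^{orb}$ under closed pushforward; the harder task, namely extending this inequality from $j_{!*}P$ to \emph{arbitrary} perverse sheaves on $\sM_g^c$ (and its pointed variants), is precisely what Theorem \ref{thm:intro4} and the gluing machinery of Section \ref{sec:key} are designed to handle.
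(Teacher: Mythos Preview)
Your proof is correct and follows exactly the paper's approach: the paper's entire proof reads ``This follows from theorem~\ref{thm:Agn},'' and you have simply spelled out how that application goes (closedness of $Y$ in $\sA_g$ via properness of $\tau$, $t$-exactness of $i_*$, and additivity of $\chi^{orb}$). Your closing remark accurately identifies why this is only a warm-up and why the harder work in Section~\ref{sec:key} is needed for arbitrary perverse sheaves on $\sM_g^c$.
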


\begin{proof}
 This follows from theorem \ref{thm:Agn}.
\end{proof}

When $g=2$, the map $\tau$  is finite and surjective, and therefore semismall.
So corollary \ref{cor:semismall} applies. 
When $g>2$, the map is not finite, however:
\begin{prop}\label{prop:M3semi}
    If $g=3$ then $\tau$ is semismall. Therefore
    $\chi^{orb}(\sM_3^c)\ge 0$
\end{prop}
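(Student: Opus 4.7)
The plan is to verify that $\tau : \sM_3^c \to \sA_3$ is semismall; since $\sM_3^c$ is a smooth irreducible stack of dimension $6$ and $\tau$ is proper by Proposition~\ref{prop:extTorelli}, Corollary~\ref{cor:semismall} will then yield $\chi(\sM_3^c) = (-1)^6 \chi^{orb}(\sM_3^c) \geq 0$. Since source and target both have dimension $6$, semismallness reduces to the inequality $\dim Y_k + 2k \leq 6$ for each $k \geq 1$, where $Y_k = \{y \in \sA_3 : \dim \tau^{-1}(y) = k\}$.

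First I would enumerate the boundary strata of $\sM_3^c$. Using that the dual graph of a compact type curve is a tree, together with the stability conditions (a genus zero vertex carries at least three special points, a genus one vertex at least one), a short combinatorial check shows that $\sM_3^c$ decomposes into exactly four strata indexed by the number of nodes: the open locus $\sM_3$ of dimension $6$; the divisor $\Delta_{1,2}$ parametrizing $E \cup_{p \sim q} C$ with $E$ elliptic and $C$ smooth of genus $2$, of dimension $5$; the locus $\Delta_{\text{ch}}$ parametrizing chains $E_1 - E_2 - E_3$ of elliptic curves, of dimension $4$; and the locus $\Delta_{\text{st}}$ parametrizing three elliptic curves attached to a central $\bbP^1$, of dimension $3$.

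Next I would compute the image and generic fiber of $\tau$ on each stratum. On $\sM_3$ the map is injective on points by Torelli, so has zero dimensional fibers. On $\Delta_{1,2}$ it factors as $(E, p, C, q) \mapsto (E, \Pic^0 C) \in \sA_1 \times \sA_2 \subset \sA_3$, with image of dimension $4$ and generic fiber isomorphic to the genus $2$ curve $C$, of dimension $1$. Crucially, this fiber is empty over the divisor $\sA_1 \times (\sA_1 \times \sA_1 / S_2)$ of decomposable surfaces, since such a product is never the Jacobian of a smooth genus $2$ curve. On $\Delta_{\text{ch}}$ the image is $\sA_1^3 \subset \sA_3$ (dimension $3$) with one dimensional fibers, parametrized by the position of the second node on the middle elliptic curve. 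On $\Delta_{\text{st}}$ the image is again $\sA_1^3$, and the map is quasi-finite since $(\bbP^1, 0, 1, \infty)$ is rigid.

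Assembling these, every fiber of $\tau$ has dimension at most $1$, so $Y_k$ is empty for $k \geq 2$, while $Y_1$ is contained in $\sA_1 \times \sA_2$ of dimension $4$, giving $\dim Y_1 + 2 \leq 6$. Semismallness follows, and Corollary~\ref{cor:semismall} concludes the argument. The delicate point is the vanishing of the $\Delta_{1,2}$ contribution over the decomposable locus of $\sA_2$: were this fiber nonempty, the fiber over a generic point of $\sA_1^3$ would acquire an extra component from $\Delta_{1,2}$ (contributing the free parameter $q \in C$ on top of any moduli of smooth $C$ with decomposable Jacobian), which could push $Y_2$ up and break semismallness at $k = 2$. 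So the argument depends essentially on the indecomposability of smooth genus $2$ Jacobians.
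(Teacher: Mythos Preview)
Your argument is correct and follows essentially the same route as the paper: enumerate the strata of $\sM_3^c$ by dual-graph type, compute the fibre dimension of $\tau$ on each, conclude that all fibres have dimension at most $1$ and that the locus of $1$-dimensional fibres sits inside the codimension-$2$ locus $\sA_1\times\sA_2\subset\sA_3$, and then invoke Corollary~\ref{cor:semismall}. Your stratification and fibre computations match the paper's cases (a), (b), (c) exactly.

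One small remark: your final paragraph about the ``delicate point'' is unnecessary and the worry is slightly misdirected. Even if there existed a smooth genus $2$ curve $C$ with $\Pic^0(C)\cong E_2\times E_3$ as a principally polarized abelian surface, Torelli for genus $2$ would still pin down $C$ uniquely, so the $\Delta_{1,2}$ contribution over such a point would only add another $1$-dimensional component to the fibre, not a $2$-dimensional one. More to the point, your semismallness bound $\dim Y_1+2\le 6$ already follows from $Y_1\subseteq \sA_1\times\sA_2$ without ever needing to analyse what happens over the decomposable locus. So the indecomposability of smooth genus $2$ Jacobians, while true, plays no role in the argument.
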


\begin{proof}
    When $g=3$, we claim that
  the fibres of $\tau$ are at most one dimensional, and these lie over the codimension $2$ subset of $\sA_3$ parameterizing reducible abelian varieties. 
  To prove the claim, observe that a singular curve $C$
  in $ \sM_3^c$ is either 
  \begin{enumerate}
      \item[(a)] a union $C=C_1\cup C_2$ of an genus $1$ curve and a genus $2$ curve meeting at a point $p$,
      \item[(b)]   a union $C=C_1\cup C_2\cup C_2$ of three genus $1$ curves where the dual graph is chain,
      \item[(c)] or a union  $C= C_0\cup C_1\cup C_2\cup C_3$, where $C_1,C_2,C_3$ have genus $1$,  $C_0=\bbP^1$, and the dual graph is a trivalent tree with $C_0$ as  the root.
  \end{enumerate}
 In  case (a), $C_1$ can be taken to be an elliptic curve with $p$ as its origin. Since
 $\tau([C])= [\Pic^0(C_1)\times \Pic^0(C_2)]$,  the map forgets $p\in C_2$.
  Then we may identify the fibre  $\tau^{-1}(\tau([C]))$ with $C_2$. In case (b),
  we take $C_1\cap C_2$  as the origins of $C_1$, $C_2$, and $C_2\cap C_3$ as the origin of $C_3$.
The point $p\in C_2\cap C_3 $ is distinct from $0$ in this case; we treat (c)
as the limiting case where $p\to 0$. Then $\tau^{-1}(\tau([C]))$ with $C_2$.
   The claim follows from this, and it implies that  $\tau$ is also semismall, when $g=3$. The inequality follows from corollary \ref{cor:semismall}.
\end{proof}

\begin{rem}
The preceding proposition now implies our Theorem \ref{thm:mainthm} for $\sM_{3}$ since $\sM_3 \hookrightarrow \sM_{3}^g$ is an affine embedding. However, the map $\sM_g^c\to \sA_g $ is not semismall when $g>3$, so in general one requires a different strategy. For general $\sM_{g,n}$ (and their products) we shall use an inductive argument via the partial compactifications $\sM_{g,n}^{sc}$ (and their products).
\end{rem}

As noted in the introduction, we shall prove Theorem \ref{thm:intro3} via induction. The following proposition provides a base case for the induction.

\begin{prop}
 Let $\sM= \sM_{0, n_1}\times\cdots \times \sM_{0, n_r} \times {}_N\sA_{g_s,m_s} \times \cdots \times {}_N\sA_{g_1,m_1} $, where $g_i > 0$, $m_i \geq 0$, $n_i > 2$ and $N \geq 3$.
   If   $\cK$ is perverse sheaf on $\sM$, then
    $$\chi(\sM,\cK)\ge 0$$
\end{prop}
\begin{proof}
Write $\sM$ as a product $M \times A$ where $M$ is a product of factors of type $\sM_{0,n}$ and $A$ is a product of factors of type ${}_N\cA_{g,n}$. Since $\sM_{0, 3}$ is a point, we may assume that  $n_i>3$. We note that $\sM_{0,4} \cong  \bbP^1-\{0,1,\infty\}$.
 If $E= 0+1+\infty$, then $\Omega_{\sM_{0,4}}^1(\log E)$ is nef. Therefore, the proposition holds for any product $(\sM_{0,4})^l$  by
 lemmas \ref{lem:pos} and \ref{lem:prodnef}.
 We have an  open immersion $j:\sM_{0,n}\to (\sM_{0,4})^{n-3}$ sending
 $$(C, p_1,\ldots, p_n)\mapsto ((C,p_1, p_2, p_3, p_i))_{i=4,\ldots n}$$
The complement of $\im(j)$ is given by a (big) diagonal, which is  a divisor. Therefore  $j$ is affine. It follows that the product map
$$\sM_{0, n_1}\times\ldots \times \sM_{0, n_r}\xrightarrow{J}  (\sM_{0,4})^{\sum n_i-3r} =: X$$
is also an open affine embedding. By Theorem \ref{thm:NAgn},  there is a compactification $\overline{A}$ of $A$ such that the log tangent bundle is nef. It follows by Lemma \ref{lem:prodnef} that the log cotangent bundle of $X \times \overline{A}$ is nef, and therefore by Lemma \ref{lem:pos} the conclusion of the proposition holds for $X \times A$. Now $J \times id$ is an open affine embedding
of $M \times A$ into $X \times A$. Therefore $ (J \times id)_* P$ is perverse when $P\in \Perv(M \times A)$. Therefore
$\chi^{orb}(\sM, P)\ge 0$.

\end{proof}

\begin{thm}\label{thm:Mg}
 Let $\sM= \sM_{g_1, n_1}^{sc}\times\ldots \times \sM_{g_r, n_r}^{sc}$, where $2g_i-2 +n_i\ge 3$.
   If   $\cK$ is perverse sheaf on $\sM$, then
    $$\chi^{orb}(\sM,\cK)\ge 0$$
    
\end{thm}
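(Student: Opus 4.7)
The plan is to reduce to the scheme setting via level structures, use the extended Torelli map to push the problem into abelian-variety moduli (where Theorem~\ref{thm:NAgn} applies), and then peel off the boundary using Theorem~\ref{thm:key1} by induction.

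Choose $N_i \geq 3$ and set $X := \prod_i {}_{N_i}\sM^c_{g_i,n_i}$, so that $\sM = [X/G]$ for a finite group $G$. By property (iii) of the Euler--Satake characteristic (Section~\ref{sec:euler}) it suffices to show that $X$ has the $PEC$-property of Section~\ref{sec:key}. Form the product of extended Torelli maps
$\tau = \prod_i \tau_i \colon X \to \cA := \prod_i {}_{N_i}\sA_{g_i, n_i}$,
which is proper by Proposition~\ref{prop:extTorelli}, and let $Y := \tau(X)$. Since $Y$ is closed in $\cA$ and closed immersions are t-exact for the perverse t-structure, Theorem~\ref{thm:NAgn} gives $PEC$ on $Y$. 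Set $U := \prod_i {}_{N_i}\sM_{g_i,n_i}$ and $Z := X \setminus U$; then $\tau|_U$ is a locally closed embedding (by Torelli for the $g_i > 1$ factors, and via the isomorphism $\sM_{1,n} \cong \sA_{1,n-1}$ for genus-one factors), $Z$ is a Cartier divisor coming from Knudsen's gluing maps, and $(X, Y, \tau, U, Z)$ is an admissible datum. Theorem~\ref{thm:key1} then reduces the problem to establishing $PEC$ on $Z$.

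I would handle this last step by induction on $\dim X = \sum_i(3g_i - 3 + n_i)$. Each irreducible component of $Z$ is the image under a finite Knudsen gluing map of a product of the allowed form but with strictly smaller dimension --- namely, replacing some ${}_{N_r}\sM^c_{g_r, n_r}$ by ${}_{N_r}\sM^c_{g_1', n_1'+1} \times {}_{N_r}\sM^c_{g_2', n_2'+1}$ with $g_1' + g_2' = g_r$ and $g_i' \geq 1$. The inductive hypothesis gives $PEC$ on such a product, and a standard transfer argument (finite pushforward is perverse t-exact and preserves Euler characteristics, combined with normalization) yields $PEC$ on each component of $Z$; Lemma~\ref{lem:posforunionofcartier}, extended inductively to a union of Cartier components, then gives $PEC$ on $Z$. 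The base case is when every $g_i = 1$ (so every $n_i > 0$): then $Z = \emptyset$ and $X \cong \prod_i {}_{N_i}\sA_{1, n_i - 1}$, so $PEC$ follows immediately from Theorem~\ref{thm:NAgn}.

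The main obstacle I foresee is the transfer of $PEC$ across the finite Knudsen gluing from the product to its image boundary divisor, and organizing the inductive accounting so that all hypotheses of Theorem~\ref{thm:key1} are simultaneously met --- in particular the Cartier-divisor condition, the properness and birationality of $\tau$ over $U$, and the correct handling of genus-one factors, where the Torelli formula in Proposition~\ref{prop:extTorelli} degenerates and must be replaced by the alternative isomorphism $\sM_{1,n} \cong \sA_{1,n-1}$.
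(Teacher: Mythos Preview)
Your overall strategy matches the paper's almost step for step: pass to level structures, apply the product of extended Torelli maps, invoke Theorem~\ref{thm:key1} with $Y=\tau(X)$ and $Z$ the gluing boundary, and handle $Z$ by induction. The paper inducts on the lexicographic order of the genus vector $(g_1,g_2,\dots)$ rather than on $\dim X$, and treats $\Perv(Z)$ by reducing to simple objects (using that the category is Artinian) and realizing each as $gl_*L$ for a perverse $L$ on the source of a gluing map, rather than by your component-by-component-plus-Lemma~\ref{lem:posforunionofcartier} route; but these are cosmetic differences.

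The substantive gap is precisely the one you flag at the end: the genus-one factors. Your proposed fix, the isomorphism ${}_N\sM^c_{1,n}\cong{}_N\sA_{1,n-1}$, is false for $n\ge 3$. The natural map
\[
(C,p_1,\dots,p_n)\ \longmapsto\ \bigl(\Pic^0(C),\,[p_2-p_1],\dots,[p_n-p_1]\bigr)
\]
is proper and birational but not an isomorphism: already in $\sM^c_{1,3}$ the two-dimensional locus of curves $E\cup_x\bbP^1$ with all three marked points on the rational tail is contracted to the one-dimensional zero-section $\{(E,0,0)\}\subset\sA_{1,2}$. So neither your base case nor your claim that $\tau|_U$ is an embedding on the genus-one factors goes through as stated (note also that the $\tau$ of Proposition~\ref{prop:extTorelli} sends every section to $0$ when $g=1$, so it cannot itself supply the embedding). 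The paper handles the all-genus-one case by a separate argument: it produces an affine open immersion $j$ of $X$ into a product of ${}_N\sA_{1,m}$'s, so that $j_*$ is $t$-exact and Theorem~\ref{thm:NAgn} applies directly to $j_*K$. You will need to replace your claimed isomorphism with an argument of that shape.
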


\begin{proof}
For a product $\sM =\prod\sM_{g_i,n_i}^{sc}$ as in the theorem, let $H(\sM) = \max_{i} d_i$ and $S(\sM) = \Sigma_{i} d_i$ where $d_i = 3g_i-3 + n_i$ (the dimension of the given component). We consider pairs of natural numbers $(H,S)$ ordered lexicographically: $(H,S) < (H',S')$ if $H < H'$ or $H = H'$ and $S< S'$. We shall proceed by induction with respect to this ordering.

Fix $N\ge 3$, and let $\sM= {}_N\sM_{g_1, n_1}^{sc}\times\ldots \times {}_N\sM_{g_r, n_r}^{sc}$. Here, we set ${}_N\sM_{0, n}^{sc} := {}_N\sM_{0,n}$. 
It is enough to prove that
\begin{equation}\label{eq:Mg}
\chi(\sM,K)\ge 0 
\end{equation}
for $K\in \Perv(\sM)$. 

We set ${}_N\sA_{0,n} := {}_N\sM_{0,n}$. Then, we have the induced map $\tau: \sM \rightarrow \sA$ where $\sA = \prod {}_N\sA_{g_i,n_i}$ and $\tau$ given by the product of  the identities  on ${}_N\sM_{0,n}$ and extended
Torelli maps on other factors. Note that $\sA$ has $PEC$ by Theorem \ref{thm:NAgn}.
Since  $\sY=\tau(\sM)\subset \sA$ is closed by Lemma \ref{lem:imageclosed}, it also has PEC.
Let $\sZ \subset \sM$ be the closed subset defined as the union of the closed subsets of $\sM$ given by replacing the $i$-th factor in $\sM$ by a divisor in the boundary $\sM_{g_i, n_i}^{sc}-\sM_{g_i,n_i}$.
Then $\sZ$ empty if and only if all the factors are either $\sM_{0,n}$ or $\sM_{1,1}$. In this case, we have already observed the conclusion of the theorem in the previous proposition. Therefore we can assume that $\sZ$ is a nonempty  divisor. By an application of Theorem \ref{thm:key1}, we are reduced to showing the $PEC$ property on the boundary $\sZ$.

The boundary $\sZ$ is  a union of divisors $\sD$, each of which is of the above form   (a product of spaces ${}_N\sM_{g',n'}^{sc}$). For each component $\sD$, we claim that
$(H(\sD),S(\sD)) < (H(\sM), S(\sM))$. To see this, note that a boundary divisor $\sD$ amounts to replacing one of the factors in $\sM$ by one of boundary divisors of the given factor. In particular, $S$ for this factor is one less (it is the dimension of this divisor). On the other hand, $H$ is the maximum dimension of the factors, and this either decreases or remains the same. This proves the claim.

It remains to show that $\chi(\sZ, K)\ge 0$ for $K\in \Perv(\sZ)$.
On the other hand, since $\Perv(\sZ)$ is Artinian, we can assume that $K$ is simple.
Then $K$ is the image of a perverse sheaf $L$ in some $\sZ' = \prod {}_N\sM_{g_i', n_i'}$, with $(H(\sZ),S(\sZ)) < (H(\sM),S(\sM))$. Therefore, the claim follows by the induction hypothesis.

\end{proof}

As an immediate consequence we have the following corollary, which completes the proof of Theorem \ref{thm:mainthm}.

\begin{cor}\label{cor:Mg}
 Let $\sM= \sM_{g_1, n_1}\times\ldots \times \sM_{g_r, n_r}$, where either $g_i > 1$ or $g_i=1$ and $n_i>0$. 
   If   $P$ is perverse sheaf on $\sM$, then
    $$\chi^{orb}(\sM,P)\ge 0$$

\end{cor}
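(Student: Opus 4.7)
The plan is to reduce the open case to the compact type case handled by Theorem \ref{thm:Mg} via an affine open immersion argument. Concretely, consider the open immersion
$$j \colon \sM = \prod_i \sM_{g_i,n_i} \hookrightarrow \prod_i \sM^c_{g_i,n_i} = \sM^c.$$
I would show that $j$ is affine, which would imply $j_!$ is $t$-exact with respect to the perverse $t$-structure, hence $j_!P \in \Perv(\sM^c)$ whenever $P \in \Perv(\sM)$. Then Theorem \ref{thm:Mg} gives $\chi^{orb}(\sM^c, j_!P) \geq 0$, and by the additivity axiom (ii) of the Euler--Satake characteristic together with $(j_!P)|_{Z} = 0$ on the complement $Z = \sM^c \setminus \sM$, we obtain
$$\chi^{orb}(\sM, P) = \chi^{orb}(\sM^c, j_!P) \geq 0.$$

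For the affineness claim, I would pass to the level $N$ cover (for some $N\ge 3$), where $\sM^c$ is represented by the smooth quasiprojective variety ${}_N\sM^c_{g_1,n_1}\times\ldots\times {}_N\sM^c_{g_r,n_r}$ and the boundary of ${}_N\sM^c_{g_i,n_i}$ in each factor is a union of smooth irreducible divisors arising as images of the Knudsen gluing maps. Because the ambient variety is smooth, each such boundary divisor is Cartier, and hence their union — which is precisely the complement of the open $\prod_i{}_N\sM_{g_i,n_i}$ — is the support of an effective Cartier divisor. The complement of (the support of) a Cartier divisor is always an affine open immersion, since locally on the target the divisor is cut out by a single equation $f$ and the complement is the principal open $D(f)$. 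Affineness then descends from the étale level cover to the stack.

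The only mildly delicate step is keeping track of the Euler--Satake formalism for DM stacks: since $j$ is étale-locally the inclusion of the complement of a Cartier divisor on a smooth scheme, the preservation of perversity under $j_!$ in the lisse-étale framework of Laszlo--Olsson reduces to the scheme-theoretic statement, which is standard. Once perversity of $j_!P$ is in hand, the rest of the argument is a one-line application of the additivity axiom and Theorem \ref{thm:Mg}. I expect the only obstacle, if any, is verifying carefully that affineness descends from the level cover, which is a routine consequence of the fact that affineness is preserved under finite étale descent together with axiom (iii) relating $\chi^{orb}(\sM,\cdot)$ to $\chi({}_N\sM,\cdot)$ if one prefers to carry out the argument entirely on the level cover.
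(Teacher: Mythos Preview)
Your argument is correct and follows essentially the same route as the paper: show that $j:\sM\hookrightarrow\sM^c$ is affine because the complement is a (Cartier) divisor, deduce that the extension of $P$ to $\sM^c$ is perverse, and invoke Theorem~\ref{thm:Mg}. The only difference is that the paper uses $j_*$ rather than $j_!$; both are $t$-exact for an affine open immersion, and both give $\chi^{orb}(\sM,P)=\chi^{orb}(\sM^c,\text{extension})$, so this is a cosmetic variation.
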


\begin{proof}
 Let $\sM^{sc}= \sM_{g_1, n_1}^{sc}\times\ldots \times \sM_{g_r, n_r}^{sc}$. The embedding $j:\sM\to \sM^{sc}$ is affine, since the complement is a divisor.
 Therefore $j_*P\in \Perv(\sM^{sc})$. Consequently,
 $$\chi^{orb}(\sM,P) = \chi^{orb}(\sM^{sc}, j_*P) \ge 0$$
\end{proof}

\section{Remarks in characteristic $p$.}
In this section, we discuss some peculiarities in the characteristic  $p$ setting. In particular, we show that the analog of Theorem \ref{thm:NAgn} is false in this setting. We fix $N \geq 3$, and consider the moduli space $\prescript{}{N}\sA_{g}$, viewed as a scheme over $\operatorname{Spec} \bbZ[1/N]$. Recall, that this is a smooth scheme over the base. Let $\prescript{}{N}\sA_{g} \otimes \bbF_p$ (resp. $\prescript{}{N}\sA_{g} \otimes \bbC$) denote the corresponding moduli space over $\bbF_p$ (resp. $\bbC$). For a fixed prime $p$, let $\chi(\prescript{}{N}\sA_{g} \otimes \bbF_p)$ denote the usual $\ell$-adic Euler characteristic (over $\bar{\bbF}_p$) for some fixed prime $\ell \neq p$. Note that this is an integer independent of $\ell$. Similarly, let $\chi(\prescript{}{N}\sA_{g} \otimes \bbC)$ denote the topological Euler characteristic (say with rational coefficients). Note that this can be identified with the $\ell$-adic Euler characteristic by the usual comparison theorem for \'etale cohomology and singular cohomology. We begin by first observing the following:

\begin{thm}
For all $p \nmid N$, $\chi(\prescript{}{N}\sA_{g} \otimes \bbF_p) = \chi(\prescript{}{N}\sA_{g} \otimes \bbC).$

\end{thm}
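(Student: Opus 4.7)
The plan is to reduce the statement to the smooth-and-proper base change theorem for $\ell$-adic cohomology, by means of an arithmetic toroidal compactification. By the theory of Faltings--Chai, one has a smooth projective toroidal compactification $\pi:\overline{\prescript{}{N}\sA_g}\to\Spec\bbZ[1/N]$ whose boundary $D:=\overline{\prescript{}{N}\sA_g}\setminus\prescript{}{N}\sA_g$ is a relative simple normal crossings divisor. By a suitable choice of smooth cone decomposition, one may arrange that the irreducible components $D_i$ of $D$, together with all iterated intersections $\bigcap_{j\in J}D_j$, are smooth and projective over $\Spec\bbZ[1/N]$.

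Since each of these strata, as well as $\overline{\prescript{}{N}\sA_g}$ itself, is smooth and proper over the connected base $\Spec\bbZ[1/N]$, the smooth-and-proper base change theorem implies that the higher direct images of the constant $\bbQ_\ell$-sheaf are lisse, and hence of constant rank at all geometric points. Consequently, the $\ell$-adic Euler characteristic of each such variety is independent of the choice of geometric fiber; in particular,
$$\chi\bigl(\overline{\prescript{}{N}\sA_g}\otimes\bar{\bbF}_p\bigr)=\chi\bigl(\overline{\prescript{}{N}\sA_g}\otimes\bbC\bigr),$$
together with the analogous equality for $\bigcap_{j\in J}D_j$ for every nonempty $J\subset I$.

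Applying inclusion--exclusion to $D=\bigcup_i D_i$ then gives
$$\chi(D\otimes\bar k)=\sum_{\emptyset\ne J\subset I}(-1)^{|J|+1}\,\chi\Bigl(\bigcap_{j\in J}D_j\otimes\bar k\Bigr),$$
so that $\chi(D\otimes\bar{\bbF}_p)=\chi(D\otimes\bbC)$. Finally, additivity of the ($\ell$-adic) Euler characteristic on the open--closed decomposition $\prescript{}{N}\sA_g\sqcup D=\overline{\prescript{}{N}\sA_g}$ yields
$$\chi\bigl(\prescript{}{N}\sA_g\otimes\bar k\bigr)=\chi\bigl(\overline{\prescript{}{N}\sA_g}\otimes\bar k\bigr)-\chi(D\otimes\bar k),$$
from which the desired equality is immediate. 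Here we use that, for smooth varieties, the compactly supported and ordinary $\ell$-adic Euler characteristics coincide by Poincar\'e duality, so there is no ambiguity in the notation $\chi$. The only substantive input is the existence of an arithmetic toroidal compactification with smooth boundary strata over $\Spec\bbZ[1/N]$, which is precisely what Faltings--Chai provides; everything else is a formal consequence of base change combined with inclusion--exclusion.
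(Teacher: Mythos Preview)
Your proof is correct and follows essentially the same route as the paper: pick a Faltings--Chai toroidal compactification over $\Spec\bbZ[1/N]$ with relative normal crossings boundary, apply smooth--proper base change to the compactification and to each closed stratum $\bigcap_{j\in J}D_j$, then use inclusion--exclusion and additivity on the open--closed decomposition. The paper packages the second half of this into a standalone lemma over a strictly henselian base, but the content is identical.
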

\begin{proof}
Choose a good compactification (over $\bbZ[1/N]$) (for example, a toroidal compactification as in \cite{fc}) and use the lemma below.
\end{proof}

The following Lemma is standard, but we give a proof for lack of a reference.

\begin{lem}
Let $S$ be a strictly henselian ring with with closed point $s$ and generic point $\eta$. Let $\bar{X} \rightarrow S$ a smooth proper morphism, $D \subset \bar{X}$ a relative normal crossings divisor, and $X: = \bar{X} \setminus D$. Then 
$\chi(X_{s}) = \chi(X_{\bar{\eta}})$.
\end{lem}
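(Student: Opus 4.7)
The plan is to reduce the statement to the already-known invariance of $\ell$-adic cohomology for smooth proper morphisms (proper smooth base change) by stratifying $\bar X$ compatibly with $D$ and using additivity of Euler characteristics.

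First I would write $D = D_1 \cup \cdots \cup D_r$ as the decomposition into (relative) irreducible components, and for each nonempty subset $I \subseteq \{1,\ldots,r\}$ form $D_I := \bigcap_{i \in I} D_i$, with $D_\emptyset := \bar X$. Because $D$ is a relative simple normal crossings divisor over $S$, each $D_I \to S$ is smooth and proper (shrinking and passing to connected components of intersections as needed; the strict henselization of $S$ lets one work Zariski-locally). The collection of locally closed strata $D_I^\circ := D_I \setminus \bigcup_{J \supsetneq I} D_J$ stratifies $\bar X$, and $X = D_\emptyset^\circ$.

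Next I would invoke additivity of the $\ell$-adic Euler characteristic with compact support along a locally closed stratification, together with inclusion-exclusion, to write
\[
\chi(X_t) \;=\; \sum_{I} (-1)^{|I|} \chi(D_{I,t})
\]
for $t \in \{s, \bar\eta\}$, where $\chi$ means compactly supported $\ell$-adic Euler characteristic (which coincides with the ordinary one for proper schemes, and which agrees with the topological Euler characteristic over $\bbC$ by the comparison theorem).

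Finally, since $D_I \to S$ is smooth and proper, proper smooth base change implies that the higher direct images $R^q(D_I \to S)_* \bbQ_\ell$ are lisse, hence constant on the strictly henselian base $S$. In particular $\chi(D_{I,s}) = \chi(D_{I,\bar\eta})$ for every $I$. Summing over $I$ with signs gives $\chi(X_s) = \chi(X_{\bar\eta})$, as desired. The only mild subtlety, and arguably the main thing to check carefully, is that the $D_I$ really are smooth and proper over $S$ (so that proper smooth base change applies); but this is exactly the content of $D/S$ being a relative normal crossings divisor, so no genuine obstacle arises.
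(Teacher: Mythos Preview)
Your proposal is correct and follows essentially the same approach as the paper: decompose $D$ into components, form the intersections $D_I$, use that each $D_I \to S$ is smooth and proper so proper smooth base change gives equal Euler characteristics on the fibres, and finish by inclusion--exclusion/additivity. The paper phrases the last step as first computing $\chi(D_s)=\chi(D_{\bar\eta})$ via descent (noting that this uses equality of individual Betti numbers) and then $\chi(X)=\chi(\bar X)-\chi(D)$, whereas you write the inclusion--exclusion formula $\chi(X_t)=\sum_I(-1)^{|I|}\chi(D_{I,t})$ directly; this is a cosmetic difference, not a substantive one.
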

\begin{proof}
Note that the analogous assertion for $\bar{X}$ holds as a corollary to the classical smooth and proper base change theorem. Suppose $D = D_1 \cup \cdots \cup D_n$ is a decomposition into irreducible component, and let $D_I : = \bigcap_{i \in I} D_i$ for $I \subset \{1,\ldots,n\}$. Then each $D_I \rightarrow S$ is smooth proper, and therefore one also has
$\chi((D_I)_s) = \chi((D_I)_{\bar{\eta}})$. By desecent, we conclude that $\chi(D_s) = \chi(D_{\bar{\eta}})$. Alternatively, we can also conclude via the inclusion-exclusion principle. Finally, by additivity of Euler characteristics we get the desired conclusion for $X$.
\end{proof}

\begin{rem}
Consider a smooth compactification of $\prescript{}{N}\sA_{g}$ with normal crossings boundary $\sD$ inducing a smooth compactification of $\prescript{}{N}\sA_{g} \otimes \bbF_p$ with boundary $D$. For any smooth subvariety of $\prescript{}{N}\sA_{g} \otimes \bbF_p$ whose closure has normal crossings with $D$ (so that in particular the closure is smooth) and lifts to characteristic 0, we obtain the signed Euler characteristic property as a consequence of the previous Lemma.
\end{rem}

We now recall some counter-examples in characteristic  p resulting from super-singular strata. For the rest of this section, we assume that $p$ does not divide $N$. Below, we set $\prescript{}{N}\sA_{g,p} := \prescript{}{N}\sA_{g} \otimes \bbF_p[\zeta_N]$ (where $\zeta_N$ is a fixed primitive $N$-th root of unity). Below, if $N =1$, we will sometimes denote $\prescript{}{N}\sA_{g,p}$ simply by $\sA_{g,p}$. We denote by $|\prescript{}{N}\sA_{g,p}|$ the corresponding coarse modulii spaces. If $N > 2$, then this is the same as the fine moduli space. \\

Let $\prescript{}{N}\cS_{g,p} \subset |\prescript{}{N}\sA_{g,p}|$ denote the super-singular locus. If $N=1$, we drop the superscript and denote it by $\cS_{g,p}$. By definition this is the locus of points $(A,\lambda, H)$ (here $\lambda$ is a principal polarization and $H$ is a level structure) such that $A \otimes \bar{\bbF}_p$ is super-singular. This is a closed subset with $\dim(\prescript{}{N}\cS_{g,p}) = \lfloor \frac{g^2}{4} \rfloor$ (\cite{lioort} ). Note that it is known that in general $|\prescript{}{N}\cS_{g,p}|$ is not irreducible, but it is equidimensional. In particular, $\dim(\prescript{}{N}\cS_{2,p}) = 1$  and $\dim(\prescript{}{N}\cS_{3,p}) = 2$. The components of $\prescript{}{N}\cS_{g,p}$ can be parametrized by spaces of flag type data for Dieudonne modules (\cite{lioort}). We do not recall the details, but give a description of the resulting parameter space in the case $g=2$ (\cite{KO} (section 2, 5.1, 5.3) and $g=3$ \cite{lioort} 

\begin{thm}(\cite{KO},\cite{lioort})
\begin{enumerate}
\item The irreducible components of $|\prescript{}{N}\cS_{2,p}|$ are isomorphic to $\bbP^1$.
\item Let $\cX \subset \bbP^2_{\bbF_p}$ denote the Fermat curve of degree $p+1$ (i.e. defined by $x^{p+1}+y^{p+1}+z^{p+1} =0$), and $\cY := \bbP(\cO_{\cX}(1) \oplus \cO_{\cX}(-1))$. There is a natural morphism 
$$ \pi: \cY \rightarrow \cS_{3,p}$$
such that:
\begin{enumerate}
\item $\pi$ is of degree one onto its image.
\item The image of $\pi$ is an irreducible component of $\cS_{3,p}$.
\item Let $\cD \subset \cY$ denote the  the unique section with negative self intersection. Then the morphism $\pi$ contracts $\cD$ to a point. 
\end{enumerate}
\end{enumerate}
\end{thm}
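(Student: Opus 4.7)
The plan is to invoke the structure theory of the supersingular locus, which for $g=2$ goes back to Moret-Bailly and was refined by Katsura--Oort, and for $g=3$ was carried out by Li--Oort. I would treat the two parts separately, although the underlying idea is the same: parametrize supersingular objects by ``flag type quotients'' of a fixed superspecial Dieudonn\'e module and identify the resulting parameter spaces geometrically.

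For part (1), I would start from Moret-Bailly's construction. Fix a supersingular elliptic curve $E_0/\bar{\bbF}_p$; then $E_0\times E_0$ carries a natural family of principal polarizations parametrized by $\bbP^1$, and the corresponding moduli map $\bbP^1 \to |\sA_{2,p}|$ factors through the supersingular locus. After adjoining a level-$N$ structure (which exists after a finite \'etale base change once $N\ge 3$), one obtains a finite disjoint union of copies of $\bbP^1$, mapping finitely to $|\prescript{}{N}\cS_{2,p}|$. One then has to check (a) that these $\bbP^1$'s cover the entire supersingular locus, which follows from the fact that every supersingular surface is isogenous to $E_0\times E_0$ together with a degree count, and (b) that the resulting map is an isomorphism onto each irreducible component, which reduces to showing that the map is birational and both source and target are smooth $\bbP^1$'s. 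The smoothness/normality of the components is the standard consequence of the rigidity of the superspecial stratum.

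For part (2), the strategy is Li--Oort's description of components of $\cS_{3,p}$ via flag type quotients. Fix a superspecial principally polarized threefold $(A_0,\lambda_0)$; then principally polarized supersingular threefolds isogenous to $A_0$ with a prescribed Dieudonn\'e-module flag correspond to certain chains of subspaces of $M_0 := \mathbb{D}(A_0[p])$. The condition that such a flag give rise to a principally polarized supersingular threefold translates, via the Frobenius-semilinear structure on $M_0$, into a homogeneous equation of the form $x^{p+1}+y^{p+1}+z^{p+1}=0$ cutting out the Fermat curve $\cX\subset\bbP^2_{\bbF_p}$; the residual choice of a one-dimensional subspace in the flag yields a $\bbP^1$-bundle, which after computing the extension class comes out to $\cY=\bbP(\cO_\cX(1)\oplus \cO_\cX(-1))$. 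This gives the morphism $\pi:\cY\to\cS_{3,p}$. Parts (a) and (b) then follow from the construction: $\pi$ is a moduli-theoretic parametrization of a single Li--Oort component, and is generically injective by construction. Part (c) is the one really substantive point: the negative section $\cD$ corresponds to the degenerate flags in which the middle step equals the image of Verschiebung, so all such flags produce the same superspecial object, and $\pi$ collapses $\cD$ to the unique superspecial point on that component.

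The main obstacle, and the reason I would lean heavily on \cite{KO} and \cite{lioort}, is the translation between the linear-algebra data (flag type quotients of Dieudonn\'e modules with polarization) and the actual moduli functor. In particular, verifying that the contraction of $\cD$ is exactly a point and not a higher-dimensional image, and that $\pi$ is degree one onto its image rather than having hidden automorphism contributions, requires the explicit classification of automorphism groups of superspecial quotients worked out in those papers. Given this, the rest of the argument is formal.
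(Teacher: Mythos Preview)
The paper does not prove this statement; it is quoted as a result from the literature, attributed to Katsura--Oort \cite{KO} (for part (1)) and Li--Oort \cite{lioort} (for part (2)), with the explicit comment ``We do not recall the details, but give a description of the resulting parameter space.'' So there is no proof in the paper to compare your proposal against.

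That said, your outline is a faithful sketch of the arguments in those references: the Moret-Bailly family for $g=2$ and the flag-type-quotient parametrization for $g=3$, including the identification of the Fermat curve via the Frobenius-semilinear condition and the contraction of the negative section to the superspecial point. You correctly flag that the delicate points (degree one onto the image, the precise contraction behavior) require the explicit computations in \cite{KO} and \cite{lioort}, which is exactly why the paper defers to those sources rather than reproducing the argument.
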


\begin{thm}\label{thm:eulercharsameinp}
\-
\begin{enumerate}
\item There is a smooth projective curve $C \subset \prescript{}{N}\sA_{2,p}$ which does not satisfy the signed Euler characteristic property i.e. the $\ell$-adic Euler characteristic $\chi(C_{\bar{\bbF}_p}) > 0 $. In fact, $C = \bbP^1$.
\item Suppose $p>2$. There exists a normal surface in  $\sA_{3,p}$ with isolated singularities which does not satisfy the signed Euler characteristic property.
\end{enumerate}
\end{thm}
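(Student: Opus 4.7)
The plan is to build both counterexamples directly from the explicit descriptions of supersingular strata cited just above, and then compute Euler characteristics topologically from the genus of the Fermat curve $\cX$.

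For part (1), I would simply take $C$ to be any irreducible component of the supersingular locus $\prescript{}{N}\cS_{2,p}$. By the Katsura--Oort theorem quoted above, $C \cong \bbP^1$, and it sits as a smooth closed curve in $\prescript{}{N}\sA_{2,p}$. The $\ell$-adic Euler characteristic is $\chi(C_{\bar\bbF_p}) = \chi(\bbP^1) = 2$, while $\dim C = 1$, so $(-1)^{\dim C}\chi(C) = -2 < 0$. Equivalently, $\bbQ_C[1]$ is a perverse sheaf on the smooth variety $\prescript{}{N}\sA_{2,p}$ (pushed forward from $C$) with Euler characteristic $-2$, so the analog of Theorem~\ref{thm:NAgn} already fails at the level of curves.

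For part (2), let $\cX \subset \bbP^2_{\bbF_p}$ be the Fermat curve of degree $p+1$, $\cY = \bbP(\cO_\cX(1) \oplus \cO_\cX(-1))$, $\cD \subset \cY$ the negative section, and $\pi : \cY \to \sA_{3,p}$ the Li--Oort morphism. Let $Y := \pi(\cY)$. The Fermat curve has genus $g_\cX = p(p-1)/2$, hence $\chi(\cX) = 2 - p(p-1)$. Since $\cY$ is a $\bbP^1$-bundle over $\cX$ and $\cD \cong \cX$, multiplicativity of Euler characteristics in locally trivial fibrations gives $\chi(\cY) = 2\chi(\cX)$ and $\chi(\cD) = \chi(\cX)$. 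Because $\pi$ has degree one onto $Y$ and contracts $\cD$ to a single point $y_0$ while being an isomorphism on the complement, additivity yields
\[
\chi(Y) = \chi(\cY \setminus \cD) + 1 = \chi(\cY) - \chi(\cD) + 1 = \chi(\cX) + 1 = 3 - p(p-1).
\]
For $p > 2$, $p(p-1) \geq 6$, so $\chi(Y) \leq -3$. Since $\dim Y = 2$, this gives $(-1)^{\dim Y}\chi(Y) = \chi(Y) < 0$.

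It remains to justify that $Y$ is normal with isolated singularities. Since $\cD$ has negative self-intersection on the smooth ruled surface $\cY$, Artin's contraction theorem produces a normal surface obtained by contracting $\cD$ to a single (rational) singular point; as $\pi$ is a proper birational morphism contracting exactly $\cD$ and nothing else, it factors uniquely through this contraction, which one then identifies with $Y$. The main obstacle I anticipate is the bookkeeping around normality of $Y$ as a subscheme of $\sA_{3,p}$ versus merely as an abstract variety. I would sidestep this entirely by working with the perverse sheaf $\cK = \pi_*\bbQ_\cY[2]$: the morphism $\pi$ is semismall (its only positive-dimensional fibre is $\cD$ of dimension $1$, contracted to a point, and $2\cdot 1 \leq 2 = \dim Y$), so $\cK \in \Perv(Y)$ by \cite[Prop.~4.2.1]{dm}, and $\chi(Y,\cK) = \chi(\cY) = 2(2-p(p-1)) < 0$ for $p>2$. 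This directly contradicts the analog of Theorem~\ref{thm:NAgn} in characteristic $p$, regardless of the subtleties of normality of $Y$ itself, and then one recovers the statement about the surface by invoking Artin contraction to replace $Y$ by its normal model.
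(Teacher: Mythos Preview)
Your proposal is correct and follows essentially the same approach as the paper. Part (1) is identical. For part (2), you use the same Li--Oort construction $\pi:\cY\to\sA_{3,p}$ and take the image $Y$; the paper does the same, but your Euler characteristic computation is more careful than the paper's. The paper's proof says only that ``the Fermat curve in this case has negative Euler characteristic, and therefore so does the projective bundle,'' which strictly speaking gives $\chi(\cY)<0$, not $\chi(Y)<0$; your additivity computation $\chi(Y)=\chi(\cX)+1=3-p(p-1)$ is the honest version of this step.

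Your concern about normality of $Y$ as a subscheme (versus the abstract Artin contraction) is legitimate and is something the paper simply asserts without comment. Your alternative via the semismall push-forward $\pi_*\bbQ_\cY[2]$ is not in the paper; it is a clean way to produce a perverse sheaf with negative Euler characteristic and thereby violate the characteristic-zero theorem directly, without settling whether $Y$ itself is normal. That said, to literally match the theorem as stated (a normal surface \emph{in} $\sA_{3,p}$), you still need $Y$ to be normal with its reduced induced structure, not just that its abstract normal model exists; ``replacing $Y$ by its normal model'' would take you out of $\sA_{3,p}$. The paper does not address this point either, so your argument is at least as complete as the original.
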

\begin{proof}
The first part is a direct consequence of the first part of the previous theorem. For the second part, we note that the blow down of the surface $\cY$ (i.e. $\pi(\cY)$) in (2) of Theorem \ref{thm:eulercharsameinp} satisfies the properties stated here. Namely, it is a normal surface with isolated singular point, and it has negative Euler characteristic. This follows from the fact that the Fermat curve in this case has negative Euler characteristic, and therefore so does the projective bundle described above. 
\end{proof}

\begin{cor}\label{cor:counterexamplecharp}
\-
\begin{enumerate}
\item There is an embedding $(\bbP^1)^{\lfloor g/2 \rfloor} \subset \prescript{}{N}\sA_{g,p}$ for all $g \geq 2$. 
\item There exists a sequence of  smooth subvarieties $V_{g_i}\subset {}_N A_{g_i,p}$,  such that $(-1)^{\dim V_{g_i}} \chi(V_{g_i})\to -\infty$.
\item The contangent bundle $\Omega^1{ \prescript{}{N}\sA_{g,p}}$ and the Hodge bundle $F^1$ associated to the universal family of 
abelian varieties on $\prescript{}{N}\sA_{g,p}$ are not nef (in the sense that their restrictions to suitable projective subvarieties are not nef).
\end{enumerate}

\end{cor}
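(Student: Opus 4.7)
The plan is to prove the three parts sequentially, with Part~(3) following from Part~(2) via the positivity framework developed earlier in the paper.

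\textbf{Part (1).} I would construct the embedding via the standard product morphism on principally polarized abelian varieties. Given ppavs $(A_i,\lambda_i)$ of dimension $g_i$ with level $N$ structures, the product $(A_1\times A_2,\lambda_1\times\lambda_2)$ is a ppav of dimension $g_1+g_2$ with an induced level $N$ structure, yielding a morphism
$$\mu: \prescript{}{N}\sA_{g_1,p}\times\prescript{}{N}\sA_{g_2,p}\to\prescript{}{N}\sA_{g_1+g_2,p}.$$
By the preceding theorem, fix an irreducible component $\bbP^1\subset \prescript{}{N}\cS_{2,p}\subset \prescript{}{N}\sA_{2,p}$. For $g=2m$, iterating $\mu$ and restricting to the chosen $\bbP^1$ in each factor produces a morphism $(\bbP^1)^m\to \prescript{}{N}\sA_{g,p}$ whose image lies in the supersingular locus and is therefore complete; for odd $g=2m+1$ one additionally multiplies by a fixed supersingular elliptic curve with level structure. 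Injectivity on geometric points follows from the Krull--Schmidt property for polarized abelian varieties. To upgrade injectivity to a closed immersion, one checks that $\mu$ is unramified along the $\bbP^1$'s by invoking Serre--Tate theory: tangent directions at a supersingular point deform each factor independently, so the differential of $\mu$ is injective.

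\textbf{Part (2).} Take $V_{g_i}=(\bbP^1)^{m_i}$ with $m_i=\lfloor g_i/2\rfloor$ embedded as in Part~(1). Choose $g_i\to\infty$ so that $m_i$ is odd, e.g.\ $g_i=4i+2$ giving $m_i=2i+1$. Then
$$(-1)^{\dim V_{g_i}}\chi(V_{g_i}) \;=\; (-1)^{m_i}\cdot 2^{m_i} \;=\; -2^{m_i}\;\longrightarrow\;-\infty.$$

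\textbf{Part (3).} I argue by contradiction. Let $V=(\bbP^1)^m\subset \prescript{}{N}\sA_{g,p}$ be one of the projective subvarieties from Part~(1). If $\Omega^1_{\prescript{}{N}\sA_{g,p}}|_V$ were nef, the conormal sequence
$$0\to N^*_{V/\prescript{}{N}\sA_{g,p}}\to \Omega^1_{\prescript{}{N}\sA_{g,p}}|_V\to \Omega^1_V\to 0,$$
combined with the fact that quotients of nef bundles are nef, would force $\Omega^1_V$ to be nef. But $\Omega^1_{(\bbP^1)^m}$ is a direct sum of pullbacks of $\cO_{\bbP^1}(-2)$, which is manifestly not nef. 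For the Hodge bundle $F^1$, the Faltings--Chai isomorphism $\Omega^1_{\prescript{}{N}\sA_{g,p}}\cong S^2F^1$ is valid integrally over $\bbZ[1/N]$, hence in characteristic~$p$; since symmetric powers of nef bundles are nef, nefness of $F^1|_V$ would force nefness of $\Omega^1|_V$, contradicting what we just showed.

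The main obstacle is Part~(1): upgrading the set-theoretically injective product morphism to a genuine closed immersion requires a careful tangent-space computation at the supersingular points, handled via Serre--Tate coordinates. Parts~(2) and~(3) are then essentially formal consequences once (1) is established, with Part~(3) mirroring the nefness argument of Lemma~\ref{lem:OmegaAgNef} run in reverse.
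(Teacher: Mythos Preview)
Your proposal is correct and follows essentially the same approach as the paper. Both use the product-of-ppavs morphism (with a fixed elliptic curve in the odd case) to embed $(\bbP^1)^{\lfloor g/2\rfloor}$, deduce Part~(2) by taking $g_i$ with $\lfloor g_i/2\rfloor$ odd, and prove Part~(3) via the conormal sequence (quotient $\cO(-2)$) together with the Faltings--Chai identification $\Omega^1\cong S^2 F^1$. The only differences are cosmetic: the paper restricts to a single rational curve rather than the full $(\bbP^1)^m$ for Part~(3), and it simply asserts the embedding in Part~(1) without the Krull--Schmidt/Serre--Tate justification you supply.
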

\begin{proof}
If $g=2$, (1)  is the first part of the previous theorem. Suppose $g > 2$, and let $d= \lfloor g/2 \rfloor$. If $g$ is odd,  let $E$ be a fixed polarized elliptic curve with level $N$ structure. Then an irreducible component of either
$$\{A_1 \times \ldots \times A_d | A_i \in {}_N\cS_{2,p}\} \quad \text{if $g$ even}$$
$$ \{A_1 \times \ldots \times A_d\times E |A_i \in {}_N\cS_{2,p}\} \quad \text{if $g$ even}$$
gives an embedded copy of $(\bbP^1)^d$ in ${}_N\sA_{g,p}$.

(2) follows immediately from (1).

For the last item, observe that the restriction of $\Omega^1_{ {}_{N}\sA_{g,p}}$ to a smooth rational projective curve is not nef, because 
it has $\mathcal{O}(-2)$  as a quotient. One has that
$$\Omega^1_{ {}_{N}\sA_{g,p}} \cong S^2 F^1$$
e.g. by \cite[chap IV, theorem 7.7]{fc}. Therefore $F^1$ cannot be nef.
    
\end{proof}

\begin{rem}\label{rem:counterexamplesemipositive}
\-
\begin{enumerate}
 \item The last corollary shows that  Theorem \ref{thm:mainthm} fails very badly in positive characteristic.

\item  The corollary gives a counterexample to Theorem  \ref{thm:semipos} on the semipositivity (i.e. nefness) of Hodge bundles in characteristic p. This goes back to Moret-Bailly \cite{mb} who used essentially the same
example when $g=2$.

\end{enumerate}

\end{rem}

\bibliographystyle{plain}
\bibliography{biblio}

@article {alexeev1,
    AUTHOR = {Alexeev, Valery},
     TITLE = {Complete moduli in the presence of semiabelian group action},
   JOURNAL = {Ann. of Math. (2)},
  FJOURNAL = {Annals of Mathematics. Second Series},
    VOLUME = {155},
      YEAR = {2002},
    NUMBER = {3},
     PAGES = {611--708},
      ISSN = {0003-486X,1939-8980},
   MRCLASS = {14K10 (14D20 14M25)},
  MRNUMBER = {1923963},
MRREVIEWER = {J\'anos\ Koll\'ar},
       DOI = {10.2307/3062130},
       URL = {https://doi.org/10.2307/3062130},
}

@article {alexeev2,
    AUTHOR = {Alexeev, Valery},
     TITLE = {Compactified {J}acobians and {T}orelli map},
   JOURNAL = {Publ. Res. Inst. Math. Sci.},
  FJOURNAL = {Kyoto University. Research Institute for Mathematical
              Sciences. Publications},
    VOLUME = {40},
      YEAR = {2004},
    NUMBER = {4},
     PAGES = {1241--1265},
      ISSN = {0034-5318,1663-4926},
   MRCLASS = {14D22 (14C34)},
  MRNUMBER = {2105707},
MRREVIEWER = {Lucia\ Caporaso},
       URL = {http://projecteuclid.org/euclid.prims/1145475446},
}

@incollection {bbd,
    AUTHOR = {Be\u ilinson, A. A. and Bernstein, J. and Deligne, P.},
     TITLE = {Faisceaux pervers},
 BOOKTITLE = {Analysis and topology on singular spaces, {I} ({L}uminy,
              1981)},
    SERIES = {Ast\'erisque},
    VOLUME = {100},
     PAGES = {5--171},
 PUBLISHER = {Soc. Math. France, Paris},
      YEAR = {1982},
   MRCLASS = {32C38},
  MRNUMBER = {751966},
MRREVIEWER = {Zoghman\ Mebkhout},
}

@incollection {beilinson,
    AUTHOR = {Be\u ilinson, A. A.},
     TITLE = {How to glue perverse sheaves},
 BOOKTITLE = {{$K$}-theory, arithmetic and geometry ({M}oscow, 1984--1986)},
    SERIES = {Lecture Notes in Math.},
    VOLUME = {1289},
     PAGES = {42--51},
 PUBLISHER = {Springer, Berlin},
      YEAR = {1987},
      ISBN = {3-540-18571-2},
   MRCLASS = {14F99 (18E25 32C38)},
  MRNUMBER = {923134},
MRREVIEWER = {Jean-Luc\ Brylinski},
       DOI = {10.1007/BFb0078366},
       URL = {https://doi.org/10.1007/BFb0078366},
}

@article {Beh,
    AUTHOR = {Behrend, Kai},
     TITLE = {Donaldson-{T}homas type invariants via microlocal geometry},
   JOURNAL = {Ann. of Math. (2)},
  FJOURNAL = {Annals of Mathematics. Second Series},
    VOLUME = {170},
      YEAR = {2009},
    NUMBER = {3},
     PAGES = {1307--1338},
      ISSN = {0003-486X,1939-8980},
   MRCLASS = {14N35 (14C15 14C17 14D23)},
  MRNUMBER = {2600874},
MRREVIEWER = {Hsian-Hua\ Tseng},
       DOI = {10.4007/annals.2009.170.1307},
       URL = {https://doi.org/10.4007/annals.2009.170.1307},
}

@book {blr,
    AUTHOR = {Bosch, Siegfried and L\"utkebohmert, Werner and Raynaud,
              Michel},
     TITLE = {N\'eron models},
    SERIES = {Ergebnisse der Mathematik und ihrer Grenzgebiete (3) [Results
              in Mathematics and Related Areas (3)]},
    VOLUME = {21},
 PUBLISHER = {Springer-Verlag, Berlin},
      YEAR = {1990},
     PAGES = {x+325},
      ISBN = {3-540-50587-3},
   MRCLASS = {14K15 (11G10 14L15)},
  MRNUMBER = {1045822},
MRREVIEWER = {James\ Milne},
       DOI = {10.1007/978-3-642-51438-8},
       URL = {https://doi.org/10.1007/978-3-642-51438-8},
}

@article {dm,
    AUTHOR = {Deligne, P. and Mumford, D.},
     TITLE = {The irreducibility of the space of curves of given genus},
   JOURNAL = {Inst. Hautes \'Etudes Sci. Publ. Math.},
  FJOURNAL = {Institut des Hautes \'Etudes Scientifiques. Publications
              Math\'ematiques},
    NUMBER = {36},
      YEAR = {1969},
     PAGES = {75--109},
      ISSN = {0073-8301,1618-1913},
   MRCLASS = {14.20},
  MRNUMBER = {262240},
MRREVIEWER = {Manfred\ Herrmann},
       URL = {http://www.numdam.org/item?id=PMIHES_1969__36__75_0},
}

@misc{dw,
Author = {Ya Deng and Botong Wang},
Title = {Linear Chern-Hopf-Thurston conjecture},
Year = {2024},
Eprint = {arXiv:2405.12012},
}

@book {fc,
    AUTHOR = {Faltings, Gerd and Chai, Ching-Li},
     TITLE = {Degeneration of abelian varieties},
    SERIES = {Ergebnisse der Mathematik und ihrer Grenzgebiete (3) [Results
              in Mathematics and Related Areas (3)]},
    VOLUME = {22},
      NOTE = {With an appendix by David Mumford},
 PUBLISHER = {Springer-Verlag, Berlin},
      YEAR = {1990},
     PAGES = {xii+316},
      ISBN = {3-540-52015-5},
   MRCLASS = {14K10 (11G10 14D20 14K25)},
  MRNUMBER = {1083353},
MRREVIEWER = {Min\ Ho\ Lee},
       DOI = {10.1007/978-3-662-02632-8},
       URL = {https://doi.org/10.1007/978-3-662-02632-8},
}

@article {ffs,
    AUTHOR = {Fujino, Osamu and Fujisawa, Taro and Saito, Morihiko},
     TITLE = {Some remarks on the semipositivity theorems},
   JOURNAL = {Publ. Res. Inst. Math. Sci.},
  FJOURNAL = {Publications of the Research Institute for Mathematical
              Sciences},
    VOLUME = {50},
      YEAR = {2014},
    NUMBER = {1},
     PAGES = {85--112},
      ISSN = {0034-5318,1663-4926},
   MRCLASS = {14D07 (32G20)},
  MRNUMBER = {3167580},
MRREVIEWER = {Christian\ Schnell},
       DOI = {10.4171/PRIMS/125},
       URL = {https://doi.org/10.4171/PRIMS/125},
}

@article {fk,
    AUTHOR = {Franecki, J. and Kapranov, M.},
     TITLE = {The {G}auss map and a noncompact {R}iemann-{R}och formula for
              constructible sheaves on semiabelian varieties},
   JOURNAL = {Duke Math. J.},
  FJOURNAL = {Duke Mathematical Journal},
    VOLUME = {104},
      YEAR = {2000},
    NUMBER = {1},
     PAGES = {171--180},
      ISSN = {0012-7094,1547-7398},
   MRCLASS = {14C40 (32C38 32S60)},
  MRNUMBER = {1769729},
MRREVIEWER = {Laurent\ Manivel},
       DOI = {10.1215/S0012-7094-00-10417-6},
       URL = {https://doi.org/10.1215/S0012-7094-00-10417-6},
}

@article {reich,
    AUTHOR = {Reich, Ryan},
     TITLE = {Notes on {B}eilinson's ``{H}ow to glue perverse sheaves''
              [MR0923134]},
   JOURNAL = {J. Singul.},
  FJOURNAL = {Journal of Singularities},
    VOLUME = {1},
      YEAR = {2010},
     PAGES = {94--115},
      ISSN = {1949-2006},
   MRCLASS = {14F05 (18E30)},
  MRNUMBER = {2671769},
MRREVIEWER = {Jon\ Eivind\ Vatne},
}

@article {gromov,
    AUTHOR = {Gromov, M.},
     TITLE = {K\"ahler hyperbolicity and {$L_2$}-{H}odge theory},
   JOURNAL = {J. Differential Geom.},
  FJOURNAL = {Journal of Differential Geometry},
    VOLUME = {33},
      YEAR = {1991},
    NUMBER = {1},
     PAGES = {263--292},
      ISSN = {0022-040X,1945-743X},
   MRCLASS = {58G10 (32C17 58A14)},
  MRNUMBER = {1085144},
MRREVIEWER = {J\'ozef\ Dodziuk},
       URL = {http://projecteuclid.org/euclid.jdg/1214446039},
}

@article {harder,
    AUTHOR = {Harder, G.},
     TITLE = {A {G}auss-{B}onnet formula for discrete arithmetically defined
              groups},
   JOURNAL = {Ann. Sci. \'Ecole Norm. Sup. (4)},
  FJOURNAL = {Annales Scientifiques de l'\'Ecole Normale Sup\'erieure.
              Quatri\`eme S\'erie},
    VOLUME = {4},
      YEAR = {1971},
     PAGES = {409--455},
      ISSN = {0012-9593},
   MRCLASS = {20H10 (10D25 22E40 53C30 57F15)},
  MRNUMBER = {309145},
MRREVIEWER = {H.\ Garland},
       URL = {http://www.numdam.org/item?id=ASENS_1971_4_4_3_409_0},
}

@article {hz,
    AUTHOR = {Harer, J. and Zagier, D.},
     TITLE = {The {E}uler characteristic of the moduli space of curves},
   JOURNAL = {Invent. Math.},
  FJOURNAL = {Inventiones Mathematicae},
    VOLUME = {85},
      YEAR = {1986},
    NUMBER = {3},
     PAGES = {457--485},
      ISSN = {0020-9910,1432-1297},
   MRCLASS = {32G15 (14H15 57R20)},
  MRNUMBER = {848681},
MRREVIEWER = {William\ Abikoff},
       DOI = {10.1007/BF01390325},
       URL = {https://doi.org/10.1007/BF01390325},
}

@incollection {kato,
    AUTHOR = {Kato, Kazuya},
     TITLE = {Logarithmic structures of {F}ontaine-{I}llusie},
 BOOKTITLE = {Algebraic analysis, geometry, and number theory ({B}altimore,
              {MD}, 1988)},
     PAGES = {191--224},
 PUBLISHER = {Johns Hopkins Univ. Press, Baltimore, MD},
      YEAR = {1989},
      ISBN = {0-8018-3841-X},
   MRCLASS = {14F30 (14G20)},
  MRNUMBER = {1463703},
MRREVIEWER = {Adolfo\ Quir\'os},
}

@article {KO,
    AUTHOR = {Katsura, Toshiyuki and Oort, Frans},
     TITLE = {Families of supersingular abelian surfaces},
   JOURNAL = {Compositio Math.},
  FJOURNAL = {Compositio Mathematica},
    VOLUME = {62},
      YEAR = {1987},
    NUMBER = {2},
     PAGES = {107--167},
      ISSN = {0010-437X,1570-5846},
   MRCLASS = {14K10 (14D22 14K15)},
  MRNUMBER = {898731},
MRREVIEWER = {Joseph\ H.\ Silverman},
       URL = {http://www.numdam.org/item?id=CM_1987__62_2_107_0},
}

@book {kw,
    AUTHOR = {Kiehl, Reinhardt and Weissauer, Rainer},
     TITLE = {Weil conjectures, perverse sheaves and {$l$}'adic {F}ourier
              transform},
    SERIES = {Ergebnisse der Mathematik und ihrer Grenzgebiete. 3. Folge. A
              Series of Modern Surveys in Mathematics [Results in
              Mathematics and Related Areas. 3rd Series. A Series of Modern
              Surveys in Mathematics]},
    VOLUME = {42},
 PUBLISHER = {Springer-Verlag, Berlin},
      YEAR = {2001},
     PAGES = {xii+375},
      ISBN = {3-540-41457-6},
   MRCLASS = {14F20 (11G25 18E30 20G05)},
  MRNUMBER = {1855066},
MRREVIEWER = {James\ Milne},
       DOI = {10.1007/978-3-662-04576-3},
       URL = {https://doi.org/10.1007/978-3-662-04576-3},
}

@article {knudsen,
    AUTHOR = {Knudsen, Finn F.},
     TITLE = {The projectivity of the moduli space of stable curves. {II}.
              {T}he stacks {$M\sb{g,n}$}},
   JOURNAL = {Math. Scand.},
  FJOURNAL = {Mathematica Scandinavica},
    VOLUME = {52},
      YEAR = {1983},
    NUMBER = {2},
     PAGES = {161--199},
      ISSN = {0025-5521,1903-1807},
   MRCLASS = {14H10 (14D20 14D22)},
  MRNUMBER = {702953},
MRREVIEWER = {P.\ E.\ Newstead},
       DOI = {10.7146/math.scand.a-12001},
       URL = {https://doi.org/10.7146/math.scand.a-12001},
}

@article {lo,
    AUTHOR = {Laszlo, Yves and Olsson, Martin},
     TITLE = {Perverse {$t$}-structure on {A}rtin stacks},
   JOURNAL = {Math. Z.},
  FJOURNAL = {Mathematische Zeitschrift},
    VOLUME = {261},
      YEAR = {2009},
    NUMBER = {4},
     PAGES = {737--748},
      ISSN = {0025-5874,1432-1823},
   MRCLASS = {14A20 (14D20)},
  MRNUMBER = {2480756},
MRREVIEWER = {Gerhard\ Pfister},
       DOI = {10.1007/s00209-008-0348-z},
       URL = {https://doi.org/10.1007/s00209-008-0348-z},
}

@book {laumon,
    AUTHOR = {Laumon, G\'erard and Moret-Bailly, Laurent},
     TITLE = {Champs alg\'ebriques},
    SERIES = {Ergebnisse der Mathematik und ihrer Grenzgebiete. 3. Folge. A
              Series of Modern Surveys in Mathematics [Results in
              Mathematics and Related Areas. 3rd Series. A Series of Modern
              Surveys in Mathematics]},
    VOLUME = {39},
 PUBLISHER = {Springer-Verlag, Berlin},
      YEAR = {2000},
     PAGES = {xii+208},
      ISBN = {3-540-65761-4},
   MRCLASS = {14A20 (14D20)},
  MRNUMBER = {1771927},
MRREVIEWER = {Dan\ Edidin},
}

@article{laumonEuler,
 author = {Laumon, Gerard},
 title = {Comparaison de caract{\'e}ristiques d'{Euler}-{Poincar{\'e}} en cohomologie l-adique},
 fjournal = {Comptes Rendus de l'Acad{\'e}mie des Sciences. S{\'e}rie I},
 journal = {C. R. Acad. Sci., Paris, S{\'e}r. I},
 issn = {0764-4442},
 volume = {292},
 pages = {209--212},
 year = {1981},
 language = {French},
 keywords = {14C35,14G20,14F05,14F30},
 zbMATH = {3734080},
 Zbl = {0468.14005}
}

@book {lazarsfeld,
    AUTHOR = {Lazarsfeld, Robert},
     TITLE = {Positivity in algebraic geometry. {I}},
    SERIES = {Ergebnisse der Mathematik und ihrer Grenzgebiete. 3. Folge. A
              Series of Modern Surveys in Mathematics [Results in
              Mathematics and Related Areas. 3rd Series. A Series of Modern
              Surveys in Mathematics]},
    VOLUME = {48},
      NOTE = {Classical setting: line bundles and linear series},
 PUBLISHER = {Springer-Verlag, Berlin},
      YEAR = {2004},
     PAGES = {xviii+387},
      ISBN = {3-540-22533-1},
   MRCLASS = {14-02 (14C20)},
  MRNUMBER = {2095471},
MRREVIEWER = {Mihnea\ Popa},
       DOI = {10.1007/978-3-642-18808-4},
       URL = {https://doi.org/10.1007/978-3-642-18808-4},
}

@book {lioort,
    AUTHOR = {Li, Ke-Zheng and Oort, Frans},
     TITLE = {Moduli of supersingular abelian varieties},
    SERIES = {Lecture Notes in Mathematics},
    VOLUME = {1680},
 PUBLISHER = {Springer-Verlag, Berlin},
      YEAR = {1998},
     PAGES = {iv+116},
      ISBN = {3-540-63923-3},
   MRCLASS = {14K10 (11G10 14L05)},
  MRNUMBER = {1611305},
MRREVIEWER = {Ben\ Moonen},
       DOI = {10.1007/BFb0095931},
       URL = {https://doi.org/10.1007/BFb0095931},
}

@article {LMW,
    AUTHOR = {Liu, Yongqiang and Maxim, Lauren\c tiu and Wang, Botong},
     TITLE = {Aspherical manifolds, {M}ellin transformation and a question
              of {B}obadilla-{K}oll\'ar},
   JOURNAL = {J. Reine Angew. Math.},
  FJOURNAL = {Journal f\"ur die Reine und Angewandte Mathematik. [Crelle's
              Journal]},
    VOLUME = {781},
      YEAR = {2021},
     PAGES = {1--18},
      ISSN = {0075-4102,1435-5345},
   MRCLASS = {32Q55 (14A30 14F45)},
  MRNUMBER = {4343101},
MRREVIEWER = {Clara\ L\"oh},
       DOI = {10.1515/crelle-2021-0055},
       URL = {https://doi.org/10.1515/crelle-2021-0055},
}

@incollection {mb,
    AUTHOR = {Moret-Bailly, Laurent},
     TITLE = {Familles de courbes et de vari\'et\'es ab\'eliennes sur
              {${\Bbb P}^1$}. {II}. {E}xemples},
      NOTE = {Seminar on Pencils of Curves of Genus at Least Two},
   JOURNAL = {Ast\'erisque},
  FJOURNAL = {Ast\'erisque},
    NUMBER = {86},
      YEAR = {1981},
     PAGES = {125--140},
      ISSN = {0303-1179,2492-5926},
   MRCLASS = {14K05 (14K25)},
  MRNUMBER = {3618576},
}

@book {namikawa,
    AUTHOR = {Namikawa, Yukihiko},
     TITLE = {Toroidal compactification of {S}iegel spaces},
    SERIES = {Lecture Notes in Mathematics},
    VOLUME = {812},
 PUBLISHER = {Springer, Berlin},
      YEAR = {1980},
     PAGES = {viii+162},
      ISBN = {3-540-10021-0},
   MRCLASS = {32J05 (14K10 32M99 32N15)},
  MRNUMBER = {584625},
MRREVIEWER = {Masa-Nori\ Ishida},
}

@incollection {Verdier,
    AUTHOR = {Verdier, J.-L.},
     TITLE = {Sp\'ecialisation de faisceaux et monodromie mod\'er\'ee},
 BOOKTITLE = {Analysis and topology on singular spaces, {II}, {III}
              ({L}uminy, 1981)},
    SERIES = {Ast\'erisque},
    VOLUME = {101-102},
     PAGES = {332--364},
 PUBLISHER = {Soc. Math. France, Paris},
      YEAR = {1983},
   MRCLASS = {32C38 (14D05 32C99)},
  MRNUMBER = {737938},
MRREVIEWER = {Jean-Paul\ Brasselet},
}

@article {wz,
    AUTHOR = {Wu, Lei and Zhou, Peng},
     TITLE = {Log {$\mathcal{D}$}-modules and index theorems},
   JOURNAL = {Forum Math. Sigma},
  FJOURNAL = {Forum of Mathematics. Sigma},
    VOLUME = {9},
      YEAR = {2021},
     PAGES = {Paper No. e3, 32},
      ISSN = {2050-5094},
   MRCLASS = {14F10 (14A21 32C38)},
  MRNUMBER = {4202488},
MRREVIEWER = {Corrado\ Marastoni},
       DOI = {10.1017/fms.2020.62},
       URL = {https://doi.org/10.1017/fms.2020.62},
}

@article {AW,
    AUTHOR = {Arapura, Donu and Wang, Botong},
     TITLE = {Perverse sheaves on varieties with large fundamental groups},
   JOURNAL = {J. Differential Geom.},
  FJOURNAL = {Journal of Differential Geometry},
    VOLUME = {129},
      YEAR = {2025},
    NUMBER = {1},
     PAGES = {1--15},
      ISSN = {0022-040X,1945-743X},
   MRCLASS = {32 (14)},
  MRNUMBER = {4856131},
       DOI = {10.4310/jdg/1736261441},
       URL = {https://doi.org/10.4310/jdg/1736261441},
}

@misc{morel,
    title={Beilinson's constructin of nearby cycles and gluing},
    author={Morel, S},
    url={http://perso.ens-lyon.fr/sophie.morel/}
}

@misc{stacks-project,
  author       = {The {Stacks project authors}},
  title        = {The Stacks project},
  howpublished = {\url{https://stacks.math.columbia.edu}},
  year         = {2025},
}

\end{document}